\def\ds{\displaystyle}
\def\O{\Omega}
\def\g{\gamma}
\def\l{\lambda}
\def\E{K}
\renewcommand\sp{\mathop{\mathrm{Sp}}\nolimits}
\newcommand{\n}{\boldsymbol{n}}
\newcommand{\bb}{\boldsymbol}
\newcommand\bu{\boldsymbol{u}}
\newcommand\bv{\boldsymbol{v}}
\newcommand\bw{\boldsymbol{w}}
\newcommand\bn{\boldsymbol{n}}
\newcommand\bg{\boldsymbol{g}}
\def\hdel{\widehat{\delta}}
\def\CM{\mathcal{X}}
\def\CN{\mathcal{Y}}
\newcommand\bT{\boldsymbol{T}}
\newcommand\0{\mathbf{0}}
\def\CT{{\mathcal T}}
\newcommand\btau{\boldsymbol{\tau}}
\newcommand\bPi{\boldsymbol{\Pi}}
\renewcommand\H{\mathrm{H}}
\renewcommand\L{\mathrm{L}}
\renewcommand\O{\Omega}
\renewcommand\div{\mathop{\mathrm{div}}\nolimits}
\renewcommand\sp{\mathop{\mathrm{sp}}\nolimits}
\newcommand\disp{\displaystyle}
\newcommand{\vertiii}[1]{{\left\vert\kern-0.25ex\left\vert\kern-0.25ex\left\vert #1 
    \right\vert\kern-0.25ex\right\vert\kern-0.25ex\right\vert}}
\crefname{hypothesis}{Hypothesis}{Hypotheses}
\title{A Conforming virtual element approximation  for the  Oseen eigenvalue problem\thanks{Submitted to the editors DATE.
\funding{The first author was partially supported by ANID-Chile through project Anillo of Computational Mathematics for Desalination Processes ACT210087. The third author was partially supported by ANID-Chile through project Anillo of Computational Mathematics for Desalination Processes ACT210087 and ANID-Chile through FONDECYT de postdoctorado 3230302 (Chile).
}}}
\author{Danilo Amigo \thanks{GIMNAP-Departamento de Matem\'atica, Universidad del B\'io - B\'io, Casilla 5-C, Concepci\'on, Chile. \email{danilo.amigo2101@alumnos.ubiobio.cl}.}
\and Felipe Lepe\thanks{GIMNAP-Departamento de Matem\'atica, Universidad del B\'io - B\'io, Casilla 5-C, Concepci\'on, Chile.  \email{flepe@ubiobio.cl}.}
\and Nitesh Verma\thanks{GIMNAP-Departamento de Matem\'atica, Universidad del B\'io - B\'io, Casilla 5-C, Concepci\'on, Chile.  \email{nverma@ubiobio.cl}.}
}
\def\CT{{\mathcal T}}
\newcommand\bV{\boldsymbol{V}}
\newcommand\bt{\boldsymbol{t}}
\def\E{K}
\def\bp{\mathbf{p}}
\newcommand\Q{\mathrm{Q}}
\begin{document}

\nolinenumbers
\maketitle

\begin{abstract}
In this paper we analyze a conforming virtual element method to approximate the eigenfunctions and eigenvalues of the two dimensional Oseen eigenvalue problem. We consider the classic velocity-pressure formulation which allows us to consider the divergence-conforming
virtual element spaces employed for the Stokes equations. Under standard assumptions on the meshes we derive a priori error estimates for the proposed method with the aid of the compact operators theory. We report some numerical tests to confirm the theoretical results.

\end{abstract}

\begin{keywords}
  Oseen  equations, eigenvalue problems, virtual element method, a priori estimates and error analysis.
\end{keywords}

\begin{AMS}
  35Q35,  65N15, 65N25, 65N30, 65N50
\end{AMS}

\section{Introduction}
\label{sec:intro}
This paper is focused on the numerical approximation of the eigenvalues and eigenfunctions of the Oseen spectral problem. Let  $\O\subset\mathbb{R}^2$ be an open and bounded polygonal domain with Lipschitz boundary $\partial\O$. 
The  equations of the Oseen eigenvalue problem are  given as follows:
\begin{equation}\label{def:oseen-eigenvalue}
\left\{
\begin{array}{rcll}
-\nu\Delta \bu + (\boldsymbol{\beta}\cdot\nabla)\bu + \nabla p&=&\lambda\bu,&\text{in}\,\O,\\
\div \bu&=&0,&\text{in}\,\O,\\
\bu &=&\boldsymbol{0},&\text{on}\,\partial\O,\\
\displaystyle\int_{\O} p &=&0, &\text{in}\,\O,
\end{array}
\right.
\end{equation}
where $\bu$ is the displacement, $p$ is the pressure and $\boldsymbol{\beta}$ is a given vector field, representing a \textit{steady flow velocity}  and $\nu>0$ is the kinematic viscosity.

The Oseen problem is a linearised version of the Navier-Stokes equations. The difference is precisely on the convective term that in \eqref{def:oseen-eigenvalue} is linear, something that on the Navier-Stokes system is not, leading to a number of well known difficulties. The literature on fluid mechanics and more precisely on the Navier-Stokes and related problems as the Oseen system is well established and we refer to \cite{MR851383, John2016} as main references on the mathematical and numerical analysis. Let us mention that these references, among others, are focused on the load problem associated to fluid mechanics.  Our contribution goes beyond that.

Our aim is to analyze numerically the eigenvalue problem \eqref{def:oseen-eigenvalue}. This problem has the particularity of being non-selfadjoint, where the desirable symmetry is no longer valid due the presence of the convective term.  Recently in \cite{MR4728079},  problem \eqref{def:oseen-eigenvalue} has been studied under the approach of a finite element method. Moreover, important properties of the problem at the continuous level are now available, such  as the relation between the spectrum of the Oseen and Stokes eigensystems when the convective term of Oseen vanishes, stability of the Oseen eigenvalue problem and its dual counterpart and, from the numerical point of view,  a priori and a posteriori error estimates and of course, numerical results that help to valid new methods.

The present paper is focused in the development and analysis of a virtual element method (VEM) to approximate the solutions of \eqref{def:oseen-eigenvalue} on the polygonal meshes. To do this task, we need to resort to suitable virtual spaces to approximate the solutions of Stokes and  Navier-Stokes equations, or variations of these systems. On this subject, the literature available is ongoing progress for different formulations such as \cite{MR3164557,MR4082227, MR3626409, MR4015151, MR3614887, MR4332146} among others. However, our intention is to apply these spaces for an eigenvalue problem, which is a different topic that has its intrinsic difficulties in the analysis, which are documented on other works using the VEM to approximate eigenvalues and eigenfunction. On this subject we refer to \cite{adak2022vem, Lepe2021} as recent contributions. Following this, our approach for the analysis of \eqref{def:oseen-eigenvalue} is the classic velocity-pressure formulation in order to extend the results provided by \cite{adak2022vem} for the a priori error analysis of the Stokes eigenvalue problem to our Oseen system, where the conforming virtual spaces under consideration are those proposed in \cite{MR4332146}. The main difference is the lack of symmetry of the Oseen spectral problem, which demands to consider here the discrete dual problem in order to derive error estimates. Hence, the VEM spaces employed by \cite{MR4332146} need to be capable to discretize properly such a dual problem.
According to \cite{MR4728079}, the solution operator for  the Oseen eigenvalue problem results to be compact
and hence, the classic theory for compact operators of \cite{MR1115235} is applied in order to derive error estimates for the eigenvalues and eigenfunctions, together with the spurious free feature of the method. On this subject, it is important to perform an extra computational analysis, since the VEM on its nature, depends on some stabilization terms in order to scale correctly the discrete bilinear forms with respect to the continuous ones. These stabilizations depend on physical parameters and the geometrical properties of the mesh and the domain, so if it is not correctly scaled, spurious eigenvalues may appear as in, for instance, \cite{adak2022vem,MR4742045,Lepe2021, MR4050542}. Hence, a complete computational study of the method is performed in order to confirm the order of convergence and the manner in which the spurious eigenvalues are precisely avoided according to the theory.

The structure of our paper is as follows: Section \ref{sec:model_problem} is dedicated to present the variational framework in which our work is supported, where sesquilinear forms,  primal and adjoint  eigenvalue problems, continuous solution operators and regularity results are presented. In section \ref{sec:vem} we introduce the virtual element method  for the numerical approximation of the problem \eqref{def:oseen-eigenvalue}.
This implies the presentation of the virtual element spaces, degrees of freedom, discrete seminorms, discrete sesquilinear forms and of course the VEM discretization for the primal and adjoint problems. Finally, in section \ref{sec:numerical-experiments}, we present numerical tests 
to show the computed order of convergence in convex and non-convex domains, and also, observed the influence of stabilization parameters on the computation of the spectrum.
\section{The variational formulation}
\label{sec:model_problem}

We begin by assuming that the viscosity $\nu$ is bounded, more precisely, we assume the existence of two constants 
$\nu^+$ and $\nu^{-}$ such that the    $\nu^{-}< \nu< \nu^{+}$. On the steady flow velocity, we assume that  $\boldsymbol{\beta}\in \L^{\infty}(\O,\mathbb{C})^2$ and the following standard assumptions  (see \cite{John2016}):
\begin{itemize}
\item $\|\boldsymbol{\beta}\|_{\infty,\O}\sim 1$ if $\nu\leq \|\boldsymbol{\beta}\|_{\infty,\O}$,
\item $\nu\sim 1$ if $\|\boldsymbol{\beta}\|_{\infty,\O}<\nu$,
\end{itemize}
where the first point is the case more close to the real applications. Moreover, we assume the existence of a number  $\varepsilon_1>0$ such that $\boldsymbol{\beta}\in\L^{2+\varepsilon_1}(\O,\mathbb{C})^2$. This  leads to  the skew-symmetry of the convective term (see \cite[Remark 5.6]{John2016}) which claims that for all $\bv\in\H_0^1(\O,\mathbb{C})^2$, there holds
\begin{equation}
\label{eq:skew}
\int_{\O}(\boldsymbol{\beta}\cdot\nabla)\bv\cdot\bv=0\quad\forall\bv\in\H_0^1(\O,\mathbb{C})^2.
\end{equation}

Now we introduce the functional spaces and norms for our analysis.  To simplify the presentation of the material, let us define the spaces 
$\mathcal{X}:=\H_0^1(\O,\mathbb{C})^2\times \L_0^2(\O,\mathbb{C})$ and   $\mathcal{Y}:=\H_0^1(\O,\mathbb{C})^2\times\H_0^1(\O,\mathbb{C})^2$. For the space $\mathcal{X}$  we define the norm $\|\cdot\|_{\mathcal{X}}^2:=\|\cdot\|_{1,\O}^2+\|\cdot\|_{0,\O}^2$ whereas for $\mathcal{Y}$ the norm will be $\|(\bv,\bw)\|_{\mathcal{Y}}^2=\|\bv\|_{1,\O}^2+\|\bw\|_{1,\O}^2$, for all $(\bv,\bw)\in\mathcal{Y}$. 

Let us introduce the following sesquilinear forms $a:\mathcal{Y}\rightarrow\mathbb{C}$
and $b:\mathcal{X}\rightarrow\mathbb{C}$ defined by 
\begin{equation*}
a(\bw,\bv):=a^{\nabla}(\bw,\bv) + a^{\boldsymbol{\beta}}(\bw,\bv)\quad
\text{and}
\quad
b(\bv,q):=-\int_{\O}q\,\div\bv,
\end{equation*}
where $a^{\nabla},a^{\boldsymbol{\beta}}:\mathcal{Y}\rightarrow\mathbb{C}$ are two sesquilinear forms  defined by
\begin{equation*}
a^{\nabla}(\bw,\bv):=\int_{\Omega}\nu\nabla\bw :\nabla\bv,\quad
\quad
a^{\boldsymbol{\beta}}(\bw,\bv):=\frac{1}{2}\left(\int_{\Omega}(\boldsymbol{\beta}\cdot\nabla)\bw\cdot\bv - \int_{\Omega}(\boldsymbol{\beta}\cdot\nabla)\bv\cdot\bw\right) ,
\end{equation*}
whereas $c(\bw,\bv) := (\bw,\bv)_{0,\O}$ is the standard inner product in $\L^{2}(\O,\mathbb{C})^2$. Observe that the resulting eigenvalue problem is  non-symmetric due the presence of the  sesquilinear form $a^{\boldsymbol{\beta}}(\cdot,\cdot)$. 

With these sesquilinear forms at hand, we write the following weak formulation for \eqref{def:oseen-eigenvalue}: Find $\lambda\in\mathbb{C}$ and $(\boldsymbol{0},0)\neq(\bu,p)\in \mathcal{X}$ such that 
\begin{equation}\label{def:oseen_system_weak}
	\left\{
	\begin{array}{rcll}
a(\bu,\bv) + b(\bv,p)&=&\lambda c(\bu,\bv)&\forall \bv\in \H_0^1(\O,\mathbb{C})^2,\\
b(\bu,q)&=&0&\forall q\in \L_0^2(\O,\mathbb{C}),
\end{array}
	\right.
\end{equation}
where
$$
\L_0^2(\Omega,\mathbb{C}):=\left\{q\in \L^2(\O,\mathbb{C})\,:\,\int_{\O} q =0 \right\}.
$$

Let us define the kernel $\mathcal{K}$ of $b(\cdot,\cdot)$ as follows
\begin{equation*}
\mathcal{K}:=\{\bv\in\H_0^1(\O,\mathbb{C})^2\,:\,  b(\bv, q)=0\,\,\,\,\forall q\in\L_0^2(\O,\mathbb{C})\}.
\end{equation*}

With this space at hand, it is easy to check with the aid of \eqref{eq:skew} that  $a(\cdot,\cdot)$ is $\mathcal{K}$-coercive. Moreover, bilinear form $b(\cdot,\cdot)$ satisfies the following inf-sup condition
\begin{equation}
\label{ec:inf-sup_cont}
\displaystyle\sup_{\btau\in\H_0^1(\O,\mathbb{C})^2}\frac{b(\btau,q)}{\|\btau_h\|_{1,\O}}\geq\beta\|q\|_{0,\O}\quad\forall q\in\L^2_0(\O,\mathbb{C}).
\end{equation}

Let $\bT$ be the so-called solution operator  which is  defined as follows
\begin{equation}\label{eq:operador_solucion_u}
	\bT:\H_0^1(\O,\mathbb{C})^2\rightarrow \H_0^1(\O,\mathbb{C})^2,\qquad \boldsymbol{f}\mapsto \bT\boldsymbol{f}:=\widehat{\bu}, 
\end{equation}
where the pair  $(\widehat{\bu}, \widehat{p})\in\mathcal{X}$ is the solution of the following well-posed source problem
\begin{equation}\label{def:oseen_system_weak_source}
	\left\{
	\begin{array}{rcll}
a(\widehat{\bu}, \bv)+b(\bv,\widehat{p})&=&c(\boldsymbol{f},\bv)&\forall \bv\in \H_0^1(\O,\mathbb{C})^2,\\
b(\widehat{\bu},q)&=&0&\forall q\in \L_0^2(\O,\mathbb{C}),
\end{array}
	\right.
\end{equation}
implying  that $\bT$ is well defined due to the Babu\v{s}ka-Brezzi theory. Moreover, from \cite[Lemma 5.8]{John2016} we have the following estimate for the velocity
\begin{equation}\label{eq:estimatefuente_velocity}
\ds\|\nabla\widehat{\bu}\|_{0,\O}\leq\frac{C_{pf}}{\nu}\|\boldsymbol{f}\|_{\H^{-1}(\O)},
\end{equation}
and for the pressure we have
\begin{equation}
\label{eq:estimatefuente_pressure}
\ds \|\widehat{p}\|_{0,\O}^2\leq \frac{1}{\beta}\left( \|\boldsymbol{f}\|_{\H^{-1}(\O)}+\nu^{1/2}
\|\nabla\widehat{\bu}\|_{0,\O}\left(\nu^{1/2}+C_{pf}\frac{\|\boldsymbol{\beta}\|_{0,\infty}}{\nu^{1/2}}\right)\right),
\end{equation}
where $C_{pf}>0$ represents the constant of the  Poincar\'e-Friedrichs inequality and $\beta>0$ is the inf-sup constant given in \eqref{ec:inf-sup_cont}.

We observe that $(\lambda , (\bu , p)) \in  \mathbb{C}  \times \mathcal{X}$  solves \eqref{def:oseen_system_weak} if and only if $(\kappa , \bu )$ is an eigenpair of $\bT , i.e., \bT \bu  = \kappa \bu$  with $\kappa  := 1/\lambda$.

Under the assumptions on $\boldsymbol{\beta}$,  we have that $a^{\boldsymbol{\beta}}(\cdot,\cdot)$ is well defined and hence, it is enough to consider the  classic Stokes regularity results (see \cite{MR1600081} for instance) in order to derive  the  following additional regularity result for the solutions of the Oseen system.
\begin{theorem}\label{th:regularidadfuente}
There exists $s>0$ such  that for all $\boldsymbol{f} \in \H_0^1(\O,\mathbb{C})^2$, the solution $(\widehat{\bu},\widehat{p})\in\mathcal{X}$ of problem \eqref{def:oseen_system_weak_source}, satisfies for the velocity $\widehat{\bu}\in  \H^{1+s}(\Omega,\mathbb{C})^2$, for the pressure $\widehat{p}\in \H^s(\Omega,\mathbb{C})$, and
 \begin{equation*}
\|\widehat{\bu}\|_{1+s,\O}+\|\widehat{p}\|_{s,\O}\leq C\|\boldsymbol{f}\|_{1,\O},
 .\end{equation*}
where $C := \dfrac{C_{pf}}{\beta}\max\left\lbrace 1, \dfrac{C_{pf}\|\bb{\beta}\|_{\infty,\O}}{\nu}\right\rbrace$ and $\beta > 0$ is the constant associated to the inf-sup condition \eqref{ec:inf-sup_cont}.
\end{theorem}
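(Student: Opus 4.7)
The plan is to reduce the Oseen source problem to a Stokes source problem with a modified right-hand side, and then invoke the classical Stokes regularity result on the polygonal domain $\Omega$.

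First, I would move the convective term to the right-hand side, rewriting the momentum equation in \eqref{def:oseen_system_weak_source} (or equivalently its strong form) as
\begin{equation*}
-\nu\Delta\widehat{\bu}+\nabla\widehat{p}=\boldsymbol{f}-(\boldsymbol{\beta}\cdot\nabla)\widehat{\bu},\qquad \div\widehat{\bu}=0,\qquad \widehat{\bu}|_{\partial\O}=\boldsymbol{0}.
\end{equation*}
Note that the skew-symmetric splitting in the definition of $a^{\boldsymbol{\beta}}$ corresponds, under integration by parts together with $\div\boldsymbol{\beta}$ being absorbed, to the standard convective form; this is the content of \eqref{eq:skew}. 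Since $\boldsymbol{\beta}\in L^{\infty}(\Omega,\mathbb{C})^2$ and $\nabla\widehat{\bu}\in L^2(\Omega,\mathbb{C})^{2\times 2}$, the source $\boldsymbol{f}-(\boldsymbol{\beta}\cdot\nabla)\widehat{\bu}$ belongs to $L^2(\Omega,\mathbb{C})^2$, so we are in the setting of the classical Stokes problem with an $L^2$ load.

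Second, I would appeal to the classical Stokes regularity on a Lipschitz polygon (see \cite{MR1600081}): there exists $s>0$, depending only on $\Omega$, such that whenever the source of the Stokes problem lies in $L^2(\Omega,\mathbb{C})^2$, its solution $(\widehat{\bu},\widehat{p})$ satisfies $\widehat{\bu}\in H^{1+s}(\Omega,\mathbb{C})^2$, $\widehat{p}\in H^s(\Omega,\mathbb{C})$, with a bound proportional to the $L^2$-norm of the source. Applied here this yields
\begin{equation*}
\|\widehat{\bu}\|_{1+s,\O}+\|\widehat{p}\|_{s,\O}\le \frac{C_{pf}}{\beta}\bigl(\|\boldsymbol{f}\|_{0,\O}+\|\boldsymbol{\beta}\|_{\infty,\O}\,\|\nabla\widehat{\bu}\|_{0,\O}\bigr).
\end{equation*}

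Third, I would control the convective contribution using the already established a priori estimate \eqref{eq:estimatefuente_velocity}, together with the continuous embeddings $H^1_0(\Omega)\hookrightarrow L^2(\Omega)\hookrightarrow H^{-1}(\Omega)$ (with the Poincar\'e--Friedrichs constant $C_{pf}$), which gives $\|\boldsymbol{f}\|_{0,\O}\le C_{pf}\|\boldsymbol{f}\|_{1,\O}$ and $\|\nabla\widehat{\bu}\|_{0,\O}\le (C_{pf}/\nu)\|\boldsymbol{f}\|_{1,\O}$. Substituting these bounds and factoring out $\|\boldsymbol{f}\|_{1,\O}$ produces a constant of the form $\tfrac{C_{pf}}{\beta}\max\{1,C_{pf}\|\boldsymbol{\beta}\|_{\infty,\O}/\nu\}$, which is exactly the $C$ displayed in the statement.

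The truly delicate point is the shift theorem for Stokes on a non-smooth domain: the regularity exponent $s$ is dictated by the re-entrant corners of $\Omega$ and cannot in general be taken as $1$. I would therefore be careful to invoke the polygonal Stokes regularity of \cite{MR1600081} rather than the convex/smooth case, and to emphasise that $s>0$ depends only on the geometry, while the constant bookkeeping in the convective correction requires nothing more than \eqref{eq:estimatefuente_velocity} and $\boldsymbol{\beta}\in L^{\infty}$. The remaining algebra is routine.
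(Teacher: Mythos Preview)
Your proposal is correct and follows exactly the route the paper indicates: the paper does not give a detailed proof but merely remarks, just before the statement, that since $a^{\boldsymbol{\beta}}(\cdot,\cdot)$ is well defined under the assumptions on $\boldsymbol{\beta}$, it suffices to invoke the classical Stokes regularity of \cite{MR1600081}. Your three steps---move the convective term to the right-hand side, apply the Stokes shift theorem to the resulting $L^2$ source, and close with the a~priori bound \eqref{eq:estimatefuente_velocity}---are precisely the intended reduction.
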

\label{thm:reg_primal}

The above  regularity result, together with the compact inclusion $\H^{1+s}(\O,\mathbb{C})^2\hookrightarrow \L^2(\O,\mathbb{C})^2$, allows us to conclude the compactness of $\bT$. Now we have the following spectral characterization for $\bT$.
\begin{lemma}(Spectral Characterization of $\bT$).
The spectrum of $\bT$ is such that $\sp(\bT)=\{0\}\cup\{\kappa_{k}\}_{k\in{N}}$ where $\{\kappa_{k}\}_{k\in\mathbf{N}}$ is a sequence of complex eigenvalues that converge to zero, according to their respective multiplicities. 
\end{lemma}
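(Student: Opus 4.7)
The plan is to reduce the claim to the classical Riesz--Schauder theory for compact operators on complex Hilbert spaces, once $\bT$ is shown to be compact as an operator on $\H_0^1(\O,\mathbb{C})^2$. The regularity statement in Theorem~\ref{th:regularidadfuente} is essentially a smoothing estimate for $\bT$, and this smoothing combined with a Rellich--Kondrachov type embedding gives compactness. After that, the sequential structure of the nonzero spectrum follows from a textbook result.

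First I would verify that $\bT$ is well defined and continuous from $\H_0^1(\O,\mathbb{C})^2$ into itself. The source problem \eqref{def:oseen_system_weak_source} is well-posed by the Babu\v{s}ka--Brezzi framework, using the $\mathcal{K}$-coercivity of $a(\cdot,\cdot)$ (which exploits the skew-symmetry identity \eqref{eq:skew}) and the inf-sup condition \eqref{ec:inf-sup_cont}; the a priori bounds \eqref{eq:estimatefuente_velocity} and \eqref{eq:estimatefuente_pressure} then yield continuity of $\boldsymbol{f}\mapsto (\widehat{\bu},\widehat{p})$, and in particular continuity of $\bT$.

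Next I would prove compactness of $\bT$. By Theorem~\ref{th:regularidadfuente}, for every $\boldsymbol{f}\in\H_0^1(\O,\mathbb{C})^2$ the image $\bT\boldsymbol{f}=\widehat{\bu}$ actually belongs to $\H^{1+s}(\O,\mathbb{C})^2$ with the continuous bound $\|\widehat{\bu}\|_{1+s,\O}\leq C\|\boldsymbol{f}\|_{1,\O}$. Hence $\bT$ factors as $\H_0^1(\O,\mathbb{C})^2 \to \H^{1+s}(\O,\mathbb{C})^2 \hookrightarrow \H_0^1(\O,\mathbb{C})^2$, where the first arrow is bounded and the inclusion is compact by Rellich--Kondrachov since $s>0$ and $\O$ is Lipschitz. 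A composition of a bounded operator with a compact embedding is compact, so $\bT$ is a compact operator on the complex Hilbert space $\H_0^1(\O,\mathbb{C})^2$.

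Finally, I would invoke the Riesz--Schauder theorem (for instance in the form stated in \cite{MR1115235}): the spectrum of a compact operator on an infinite-dimensional complex Banach space consists of $\{0\}$ together with an at most countable set of nonzero eigenvalues of finite algebraic multiplicity whose only possible accumulation point is $0$. Since $\O$ is a nontrivial open set, $\bT$ is not of finite rank (the source problem has infinitely many linearly independent solutions, as can be seen by testing with standard basis functions), and so the nonzero part of the spectrum is an infinite sequence $\{\kappa_k\}_{k\in\mathbb{N}}$ of complex eigenvalues with $\kappa_k\to 0$, repeated according to multiplicity. The main technical point is the compactness step, which relies precisely on the Sobolev regularity shift provided by Theorem~\ref{th:regularidadfuente}; once compactness is in hand, the rest is an immediate application of abstract spectral theory.
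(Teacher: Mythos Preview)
Your argument is correct and matches the paper's approach: the paper does not give a separate proof of this lemma, but immediately before stating it observes that Theorem~\ref{th:regularidadfuente} together with the compact Sobolev embedding yields compactness of $\bT$, after which the spectral characterization is standard Riesz--Schauder theory. One small point of care: your factorization should go through $\H^{1+s}(\O,\mathbb{C})^2\cap \H_0^1(\O,\mathbb{C})^2$ rather than $\H^{1+s}(\O,\mathbb{C})^2$ alone (so that the inclusion lands in $\H_0^1(\O,\mathbb{C})^2$), but the compactness of that inclusion into $\H_0^1(\O,\mathbb{C})^2$ holds for $s>0$ and the rest of your reasoning is unaffected.
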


We conclude this section by redefining the spectral problem \eqref{def:oseen_system_weak} in order to  simplify the notations for the forthcoming analysis. With this in mind, let us introduce the sesquilinear form $A:\mathcal{X}\times\mathcal{X}\rightarrow\mathbb{C}$ defined by 
\begin{equation*}
A((\bu,p);(\bv,q)):= a(\bu,\bv)+ b(\bv,p)- b(\bu,q),\quad\forall (\bv,q)\in\mathcal{X}, 
\end{equation*}
which allows us to rewrite problem \eqref{def:oseen_system_weak} as follows: Find $\lambda\in\mathbb{C}$ and $(\boldsymbol{0},0)\neq(\bu,p)\in\mathcal{X}$ such that
\begin{equation}
\label{eq:eigenA}
A((\bu,p),(\bv,q))=\lambda c(\bu,\bv)_{0,\O}\quad\forall (\bv,q)\in\mathcal{X}.
\end{equation}

From \cite[Lemma 2.4]{MR4728079}, we have that the following inf-sup conditions for $A(\cdot,\cdot)$ holds
\begin{equation}\label{eq_infsup_continua}
\begin{split}
\underset{(\bb{0},0)\neq(\bb{w},r) \in \mathcal{X}}{\inf}\underset{(\bb{0},0)\neq(\bb{v},q) \in \mathcal{X}}{\sup} \dfrac{A((\bb{v},q),(\bb{w},r))}{\|(\bb{v},q)\|\|(\bb{w},r)\|} &= \gamma, \\
\underset{(\bb{0},0)\neq(\bb{v},q) \in \mathcal{X}}{\inf}\underset{(\bb{0},0)\neq(\bb{w},r) \in \mathcal{X}}{\sup} \dfrac{A((\bb{v},q),(\bb{w},r))}{\|(\bb{v},q)\|\|(\bb{w},r)\|} &= \gamma,
\end{split}
\end{equation}
where $\gamma$ is a positive constant uniform with respect to $\nu$. 

Since  problem \eqref{def:oseen_system_weak} is non-selfadjoint, it is necessary to introduce the adjoint eigenvalue problem, which  reads as follows: Find $\lambda\in\mathbb{C}$ and a pair $(\boldsymbol{0},0)\neq(\bu^*,p^*)\in\mathcal{X}$ such that  
\begin{equation}\label{def:oseen_system_weak_dual_eigen}
	\left\{
	\begin{array}{rcll}
a(\bv,\bu^*)-b(p^*,\bv)&=&\overline{\lambda}c(\bv,\bu^*)&\forall \bv\in \H_0^{1}(\O,\mathbb{C})^2,\\
-b(q, \bu^*)&=&0&\forall q\in \L_0^2(\O,\mathbb{C}),
\end{array}
	\right.
\end{equation}
Now we introduce the adjoint solution operator  defined  by
\begin{equation}\label{eq:operador_adjunto_solucion_u}
	\bT^*:\H^{1}(\O,\mathbb{C})^2\rightarrow \H^{1}(\O,\mathbb{C})^2,\qquad \boldsymbol{f}\mapsto \bT^*\boldsymbol{f}:=\widehat{\bu}^*, 
\end{equation} 
where $\widehat{\bu}^*\in\H^{1}(\O,\mathbb{C})^2$ is the adjoint velocity of $\widehat{\bu}$ and solves the following adjoint source  problem: Find  
$(\widehat{\bu}^*, \widehat{p}^*)\in\mathcal{X}$ such that 
\begin{equation}\label{def:oseen_system_weak_dual_source}
	\left\{
	\begin{array}{rcll}
a(\bv,\widehat{\bu}^*)-b(\widehat{p}^*,\bv)&=&c(\bv,\boldsymbol{f})&\forall \bv\in \H_0^{1}(\O,\mathbb{C})^2,\\
-b(q, \widehat{\bu}^*)&=&0&\forall q\in \L_0^2(\O,\mathbb{C}).
\end{array}
	\right.
\end{equation}
Similar to Theorem \ref{th:regularidadfuente}, let us assume that the dual source and eigenvalue problems are such that the following estimate holds.

\begin{theorem}\label{th:regularidadfuente_dual}
There exists $s^*>0$ such that for all $\boldsymbol{f} \in \H_0^1(\O,\mathbb{C})^2$, the solution $(\widehat{\bu}^*,\widehat{p}^*)$ of problem \eqref{def:oseen_system_weak_dual_source}, satisfies $\widehat{\bu}^*\in  \H^{1+s^*}(\Omega,\mathbb{C} )^2$ and $\widehat{p}^*\in \H^{s^*}(\Omega,\mathbb{C} )$, and
 \begin{equation*}
\|\widehat{\bu}^*\|_{1+s^*,\O}+\|\widehat{p}^*\|_{s^*,\O}\leq C^*\|\boldsymbol{f}\|_{1,\O},
\end{equation*}
where $C^*>0$.
\end{theorem}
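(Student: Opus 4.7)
The plan is to reduce the dual source problem \eqref{def:oseen_system_weak_dual_source} to an instance of the primal problem \eqref{def:oseen_system_weak_source} and then invoke Theorem \ref{th:regularidadfuente}. The key structural observation is that $a^{\nabla}$ is symmetric in its two arguments, whereas $a^{\boldsymbol{\beta}}$ is skew-symmetric; the latter follows directly from the definition, since interchanging $\bw$ and $\bv$ simply swaps the two halved integrals. Moreover, $a^{\boldsymbol{\beta}}$ depends linearly on the convection field, so $-a^{\boldsymbol{\beta}}(\cdot,\cdot)$ coincides with the form obtained by replacing $\boldsymbol{\beta}$ with $-\boldsymbol{\beta}$. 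Using these two identities, and keeping track of the conjugation conventions inherited from the sesquilinear pairings $b$ and $c$, the pair $(\widehat{\bu}^*,-\widehat{p}^*)$ solves a primal Oseen source problem of exactly the type \eqref{def:oseen_system_weak_source}, but with convection field $-\boldsymbol{\beta}$ and datum derived from $\boldsymbol{f}$.

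The next step is to check that all the hypotheses on $\boldsymbol{\beta}$ are preserved under the sign change. This is immediate: the $\L^{\infty}(\O,\mathbb{C})^2$ and $\L^{2+\varepsilon_1}(\O,\mathbb{C})^2$ norms are invariant under negation, the scaling relations with $\nu$ from the beginning of Section \ref{sec:model_problem} are unaffected, and the skew-symmetry identity \eqref{eq:skew} continues to hold. Therefore the well-posedness framework and the regularity argument leading to Theorem \ref{th:regularidadfuente} apply verbatim to the rewritten dual system. They yield an exponent $s^*>0$ --- governed by the interior angles of $\DO$ through the polygonal Stokes shift estimate of \cite{MR1600081}, and so in practice one may take $s^* = s$ --- and the required bound $\|\widehat{\bu}^*\|_{1+s^*,\O}+\|\widehat{p}^*\|_{s^*,\O}\leq C^*\|\boldsymbol{f}\|_{1,\O}$. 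The constant $C^*$ inherits the same structure as the primal $C$, namely $C^* = (C_{pf}/\beta)\max\{1,\, C_{pf}\|\boldsymbol{\beta}\|_{\infty,\O}/\nu\}$, since the inf-sup constant $\beta$ from \eqref{ec:inf-sup_cont} and the norm $\|\boldsymbol{\beta}\|_{\infty,\O}$ are unchanged by the passage to $-\boldsymbol{\beta}$.

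The main technical obstacle I anticipate is the careful bookkeeping of complex conjugates when swapping arguments in the sesquilinear forms $b$ and $c$ to recast \eqref{def:oseen_system_weak_dual_source} in primal form. This is a notational rather than an analytical difficulty, since complex conjugation is an isometry on all of the spaces involved and therefore leaves the a priori bounds and the regularity index intact. If one prefers to avoid the rewriting altogether, an equivalent path is to repeat the proof of Theorem \ref{th:regularidadfuente} line-by-line for \eqref{def:oseen_system_weak_dual_source}: invoke the inf-sup condition \eqref{ec:inf-sup_cont} (which is symmetric in the role of its arguments), control the convective term through the same $\L^{2+\varepsilon_1}$-based estimates, and conclude with the polygonal Stokes shift theorem of \cite{MR1600081}.
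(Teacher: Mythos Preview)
The paper does not actually prove this theorem. Immediately before the statement it writes ``Similar to Theorem \ref{th:regularidadfuente}, let us \emph{assume} that the dual source and eigenvalue problems are such that the following estimate holds,'' and then simply records the statement with no argument. So there is no proof in the paper to compare against: the dual regularity is taken as a standing hypothesis.

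Your proposal therefore goes beyond the paper by supplying an actual justification, and the idea is sound. The observation that $a^{\nabla}$ is Hermitian while $a^{\boldsymbol{\beta}}$ is skew-Hermitian, together with the linearity of $a^{\boldsymbol{\beta}}$ in $\boldsymbol{\beta}$, does reduce the adjoint source problem to a primal Oseen problem with convection $-\boldsymbol{\beta}$ (after conjugation), and every hypothesis on $\boldsymbol{\beta}$ in Section~\ref{sec:model_problem} is manifestly invariant under sign change. Since the regularity in Theorem~\ref{th:regularidadfuente} ultimately rests on the polygonal Stokes shift of \cite{MR1600081}, which is independent of the convection field, one indeed obtains the same exponent $s^*=s$ and the same form of the constant. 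Your caveat about tracking complex conjugates in the sesquilinear forms $b$ and $c$ is the right place to be careful (note in particular that the paper's own notation $b(\widehat{p}^*,\bv)$ in \eqref{def:oseen_system_weak_dual_source} has the arguments in reversed order relative to the definition of $b$, so some typographical normalisation is needed before the rewriting), but as you say this is bookkeeping and does not affect the regularity index or the bound.
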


Now we hace the following spectral characterization for  $\bT^*$.
\begin{lemma}(Spectral Characterization of $\bT^*$).
The spectrum of $\bT^*$ is such that $\sp(\bT^*)=\{0\}\cup\{\kappa_{k}^*\}_{k\in{N}}$ where $\{\kappa_{k}^*\}_{k\in\mathbf{N}}$ is a sequence of complex eigenvalues that converge to zero, according to their respective multiplicities. 
\end{lemma}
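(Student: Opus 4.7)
The plan is to mirror the argument already sketched for $\bT$ in the excerpt: produce a compactness statement for $\bT^{*}$ and then invoke the Riesz–Schauder theory for compact operators from \cite{MR1115235} to obtain the stated spectral decomposition. First I would verify that $\bT^{*}$ is well defined. The dual sesquilinear form $(\bv,\bw)\mapsto a(\bv,\bw)$ inherits $\mathcal{K}$-coercivity from $a(\cdot,\cdot)$, because the skew part $a^{\boldsymbol{\beta}}$ is antisymmetric and vanishes on the diagonal while $a^{\nabla}$ is Hermitian. The inf-sup condition \eqref{ec:inf-sup_cont} for $b(\cdot,\cdot)$ is symmetric in its role, so the Babu\v ska–Brezzi theory gives the unique solvability of \eqref{def:oseen_system_weak_dual_source} together with continuous dependence on the data, whence $\bT^{*}$ is a bounded linear operator on $\H_0^{1}(\O,\mathbb{C})^2$.

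Next, I would upgrade boundedness to compactness using the dual regularity Theorem~\ref{th:regularidadfuente_dual}. That theorem gives a shift $\widehat{\bu}^{*}\in\H^{1+s^{*}}(\O,\mathbb{C})^2$ with $\|\widehat{\bu}^{*}\|_{1+s^{*},\O}\leq C^{*}\|\boldsymbol{f}\|_{1,\O}$. Since $s^{*}>0$ and $\O$ is bounded with Lipschitz boundary, the Rellich–Kondrachov theorem provides the compact embedding $\H^{1+s^{*}}(\O,\mathbb{C})^2\hookrightarrow\hookrightarrow\H^{1}(\O,\mathbb{C})^2$. Composing the bounded map $\boldsymbol{f}\mapsto\widehat{\bu}^{*}$ from $\H_0^{1}(\O,\mathbb{C})^2$ into $\H^{1+s^{*}}(\O,\mathbb{C})^2$ with this compact inclusion shows that $\bT^{*}:\H_0^{1}(\O,\mathbb{C})^2\to\H_0^{1}(\O,\mathbb{C})^2$ is compact.

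Finally, I would invoke the classical spectral theorem for compact operators on a complex Banach space (see \cite{MR1115235}): the spectrum of $\bT^{*}$ consists of $\{0\}$ together with an at most countable set of nonzero eigenvalues of finite algebraic multiplicity, with $0$ being the only possible accumulation point. Writing this set as $\{\kappa_{k}^{*}\}_{k\in\mathbf{N}}$ (repeated according to algebraic multiplicity) gives exactly the stated characterization $\sp(\bT^{*})=\{0\}\cup\{\kappa_{k}^{*}\}_{k\in\mathbf{N}}$ with $\kappa_{k}^{*}\to 0$.

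The step requiring the most care is really the first one, namely checking that the adjoint source problem falls inside the Brezzi framework despite the non-selfadjoint convective term: one must swap the roles of trial and test in $a^{\boldsymbol{\beta}}$ and observe that the skew-symmetry identity \eqref{eq:skew} still yields $\mathcal{K}$-coercivity of $\bv\mapsto a(\bv,\bv)$ on $\mathcal{K}$. Everything else—the regularity shift and the application of Rellich–Kondrachov and of Riesz–Schauder—is entirely analogous to the argument already used for $\bT$.
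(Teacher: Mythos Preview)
Your proposal is correct and follows exactly the route the paper takes (in fact, the paper states this lemma without proof, relying on the same compactness-plus-Riesz--Schauder reasoning already sketched for $\bT$). Your argument is a faithful and complete filling-in of those details; if anything, your use of the compact embedding $\H^{1+s^{*}}\hookrightarrow\H^{1}$ is slightly more precise than the paper's own remark (which mentions the embedding into $\L^2$) for establishing compactness of an operator acting on $\H_0^1$.
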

It is easy to prove that if $\kappa$ is an eigenvalue of $\bT$ with multiplicity $m$, $\overline{\kappa}$ is an eigenvalue of $\bT^*$ with the same multiplicity $m$.

Let us define the sesquilinear form $\widehat{A}:\mathcal{X}\times\mathcal{X}\rightarrow\mathbb{C}$ by
\begin{equation*}
\widehat{A}((\bv,q),(\bu^*,p^*)):=a(\bv,\bu^*)-b(p^*,\bv)+b(q, \bu^*),
\end{equation*}
which allows us to rewrite the dual eigenvalue problem  \eqref{def:oseen_system_weak_dual_eigen} as follows:  Find $\lambda\in\mathbb{C}$ and the pair $(\boldsymbol{0},0)\neq(\bu^*,p^*)\in\mathcal{X}$ such that  
\begin{equation}
\label{eq:eigen_A_dual}
\widehat{A}((\bv,q),(\bu^*,p^*))=\lambda(\bv,\bu^*)\quad\forall (\bv,q)\in\mathcal{X}.
\end{equation}
Similar to the continuous case, from \cite[Lemma 2.7]{MR4728079} we have that $\widehat{A}(\cdot,\cdot)$ satisfies the following inf-sup conditions
\begin{equation*}
\begin{split}
\underset{(\bb{0},0)\neq(\bb{w},r) \in \mathcal{X}}{\inf}\underset{(\bb{0},0)\neq(\bb{v},q) \in \mathcal{X}}{\sup} \dfrac{\widehat{A}((\bb{v},q),(\bb{w},r))}{\|(\bb{v},q)\|\|(\bb{w},r)\|} &= \gamma^*, \\
\underset{(\bb{0},0)\neq(\bb{v},q) \in \mathcal{X}}{\inf}\underset{(\bb{0},0)\neq(\bb{w},r) \in \mathcal{X}}{\sup} \dfrac{\widehat{A}((\bb{v},q),(\bb{w},r))}{\|(\bb{v},q)\|\|(\bb{w},r)\|} &= \gamma^*,
\end{split}
\end{equation*}
where $\gamma^*$ is a positive constant uniform with respect to $\nu$. 
\begin{remark}
Let us note the following: since we are considering primal and dual problems, and consequently we have primal and dual solutions, according to Theorem \ref{thm:reg_primal} and Theorem \ref{th:regularidadfuente_dual}, the corresponding constants on the estimates of these theorems depend on the nature of the problem and, strictly speaking, are not the same. We need to take this into account in the following analysis because, in order to avoid an excess of notation, some constants will be generic, regardless of whether the problem is primal or dual.
\end{remark}

\section{The virtual  element method}
\label{sec:vem}
The following section is dedicated to the analysis of the virtual element approximation for the eigenproblem
presented in Problem~\ref{def:oseen_system_weak}. To do this task, first we need to introduce the following definitions.
Let $\left\{\CT_h\right\}_{h>0}$ be a sequence of decompositions of $\O$
into polygons $\E$. Besides, we will denote by $\ell$ a generic edge of $\partial K$ and by $h_{\ell}$ its length. The set of all the edges in $\CT_h$ will be denoted by $\mathcal{E}_h$. Let us denote by $h_\E$ the diameter of the element $\E$
and $h$ the maximum of the diameters of all the elements of the mesh,
i.e., $h:=\max_{\E\in\O}h_\E$. 

For the mesh $\CT_h$ we will consider
the following assumptions:
\begin{itemize}
\item \textbf{A1.} There exists a constant $\g>0$ such that, for all meshes
$\CT_h$, each polygon $\E\in\CT_h$ is star-shaped with respect to a ball
of radius greater than or equal to $\g h_{\E}$.
\item \textbf{A2.} The distance between any two vertexes of $\E$ is $\geq Ch_\E$, where $C$ is a positive constant.
\end{itemize}

Sesquilinear forms $a(\cdot,\cdot)$, $b(\cdot,\cdot)$ and $c(\cdot,\cdot)$ can be decomposed into local contributions as follows
\begin{align*}
a(\bw,\bv)&:=\sum_{\E\in\CT_{h}}a^{\nabla,\E}(\bw,\bv)+a^{\boldsymbol{\beta},\E}(\bw,\bv)\qquad \text{for all }\bw, \bv\in \H_0^{1}(\O,\mathbb{C})^2,\\
b(\bv,q)&:=\sum_{\E\in\CT_{h}}b^{\E}(\bv,q) \qquad \text{for all }\bv\in  \H_0^{1}(\O,\mathbb{C})^2 \text{and }q\in \L_0^2(\O,\mathbb{C}),\\
c(\bw,\bv)&:=\sum_{\E\in\CT_{h}}c^{\E}(\bw,\bv) \qquad \text{for all }\bw,\bv\in  \H_0^{1}(\O,\mathbb{C})^2.
\end{align*}
In the same way, we split  elementwise the norms of $\H_0^1(\O,\mathbb{C})^2$ and $\L^2(\O,\mathbb{C})$ by 
\begin{align*}
\label{eq:normas}
\ds \|\bv\|_{1,\O}&:=\left(\sum_{K\in\mathcal{T}_h}\|\bv\|_{1,K}^2\right)^{1/2}\quad\forall\bv\in  \H^{1}(\O,\mathbb{C})^2, \\
\|q\|_{0,\O}&:=\left(\sum_{K\in\mathcal{T}_h}\|q\|_{0,K}^2\right)^{1/2}\quad\forall q\in \L_0^2(\O,\mathbb{C}).
\end{align*}

Further, we introduce the following function space of piecewise $\H^1$ functions
\begin{equation*}
\label{disH1Fn}
\H^1(\Omega,\CT_h):= \{ \bv \in \L^2(\Omega,\mathbb{C})^2~: \bv|_K \in \H^1(K,\mathbb{C})^n \},
\end{equation*} associated with the semi-norm
$\displaystyle|\bv|_{1,h}:=\left( \sum_{\E \in \CT_h} |\bv|_{1,\E}^2 \right)^{1/2}$.

Inspired by  \cite{MR4332146,MR4250631}, we will construct the virtual spaces to analyze the discrete problem.
Let us begin with  the velocity virtual space: 
for a simple polygon  $\E$  we  define 
\begin{equation*}
\label{espace-1}
\boldsymbol{\mathbb{B}}_{\partial \E}:=\{\bv_{h}\in [C^0(\partial \E)]^{2}: \bv_{h}\cdot \bn|_{\ell}\in 
\mathbb{P}_2(\ell)\text{ and }\bv_{h}\cdot \bt|_{\ell}\in 
\mathbb{P}_1(\ell)\ \ \forall \ell\subset
\partial \E\}.
\end{equation*}
With this space at hand, we consider the
following local finite dimensional 
space:
\begin{multline*}
 \label{space2}
\widehat{\boldsymbol{V}}_h^\E:=\left\{\bv_{h}\in [\H^1(\E)]^2: \Delta \bv_{h}-\nabla s\in \nabla \mathbb{P}_2(\E)^{\perp} \text{ for some } s\in \L_0^2(E), \right.\\
\left.\div \bv_h\in \mathbb{P}_0(\E)\,\, 
\textrm{and}\,\,\bv_{h}|_{\partial \E}\in \boldsymbol{\mathbb{B}}_{\partial \E} \right\}.
 \end{multline*}
The following linear operators are well defined for all $\bv_h\in\widehat{\bV}_h^\E$:
 \begin{itemize}
  \item $\mathcal{V}_\E^h$ : The (vector) values of $\bv_h$ at the vertices.
  \item $\mathcal{L}_\E^h$: the value of
  \begin{equation*}
  \frac{1}{|\ell|}\int_{\ell}\bv_h\cdot\bn \quad \forall \text{ edges }\ell\in\partial\E.
  \end{equation*}
  \item $\mathcal{\E}_\E^h$: the value of
  \begin{equation*}
  \int_\E\bv_h\cdot g^{\perp} \quad \forall \bg^{\perp}\in \nabla \mathbb{P}_2(\E)^{\perp}.
  \end{equation*}
\end{itemize}
Let us remark that the set of linear operators $\mathcal{V}_\E^h$,   $\mathcal{L}_\E^h$, and $\mathcal{\E}_\E^h$ constitutes a set of degrees of
freedom for the local virtual space $\widehat{\bV}_h^\E$ (see \cite{MR4332146}). Moreover, it is easy to check that
$[\mathbb{P}_1(\E)]^2\subset \widehat{\bV}_h^\E$. This will guarantee the good approximation properties for the space. Now we define the projector
$\bPi^{\nabla,\E}: \widehat{\bV}_h^\E\longrightarrow [\mathbb{P 
}_1(\E)]^2\subset\widehat{\bV}_h^\E$
for each $\bv_h\in\widehat{\bV}_h^\E$ as the solution of
\begin{align*}
\label{proje_0}
\left\{\begin{array}{ll}
  a^{\nabla,\E}(\bp,\boldsymbol{\Pi}^{\nabla,\E}\bv_h) &= a^{\nabla,\E}(\bp,\bv_h)
\quad \forall \bp\in [\mathbb{P}_1(\E)]^2,\\\\
\ds\vert\partial\E\vert^{-1}\int_{\partial \E}\boldsymbol{\Pi}^{\nabla,\E}\bv_h
&=\ds\vert\partial\E\vert^{-1}\int_{\partial \E}\bv_h.
\end{array}\right.
\end{align*}

We observe that $\boldsymbol{\Pi}^{\E}$ is well defined on $\widehat{\bV}_h^\E$ and computable. Now we define the local virtual element space $\bV_h^\E$ as follows,
\begin{equation*}
 \label{space23}
\bV_h^\E:=\left\{\bv_{h}\in \widehat{\bV}_h^\E:  \disp\int_\E \boldsymbol{\Pi}^{\nabla,\E}\bv_h\cdot \bg^{\perp}=\int_\E \bv_h\cdot \bg^{\perp} \text{ for } \bg^{\perp}\in \nabla \mathbb{P}_2(\E)^{\perp}\right\}.
 \end{equation*}
In addition,  the  standard $[{\mathrm L}^2(\E,\mathbb{C})]^2$-projector operator
$\boldsymbol{\Pi}_{0}^{\E}: \bV_h^{\E}\to[\mathbb{P}_1(\E)]^2$  can be  computed. In fact, for all $\bv_h\in  \bV_h^\E$, the function $\boldsymbol{\Pi}_{0}^{\E}\bv_h\in [\mathbb{P}_1(\E)]^2$ is defined by:
\begin{align*}
\label{pi0}
\int_\E\boldsymbol{\Pi}_{0}^{\E}\bv_h\cdot \boldsymbol{p}&=\int_\E\bv_h\cdot \left(\nabla p_2+\nabla p_2^{\perp}\right)\\
&=-\int_\E\div(\bv_h)p_2+\int_{\partial \E}(\bv_h\cdot\n)p_2+
 \disp\int_\E \boldsymbol{\Pi}^{\nabla,\E}\bv_h\cdot\nabla p_2^{\perp},
\end{align*}
for every $\boldsymbol{p}\in[\mathbb{P}_1(\E)]^2$.
\begin{remark}
In the case when we need to denote the aforementioned projectors as global, we will drop the superindex $\E$ when is necessary.
\end{remark}

 Now we introduce  the global virtual space:
for every decomposition $\CT_h$ of $\O$ into
simple polygons $\E$, we define
\begin{align*}
\bV_h:=\left\{\bv_{h}\in\H_0^1(\O,\mathbb{C})^2:\bv_h|_{\E}\in  \bV_h^{\E},\quad \forall \E\in \CT_h \right\}.
\end{align*}
In agreement with the local choice of the degrees of freedom, in 
$\bV_h$ we choose the following degrees of freedom:
\begin{itemize}
\item $\mathcal{V}^h$: the (vector) values of $\bv_h$ at the vertices of $\CT_h$,
\item $\mathcal{L}^h$: the value of
  \begin{equation*}
  \frac{1}{|\ell|} \int_\ell\bv_h\cdot\bn \quad \forall \ell\in\CT_h.
  \end{equation*}
\end{itemize}
On the other hand, the pressure space is given by
\begin{equation*}
\Q_h:=\{q_h \in \L^{2}(\O,\mathbb{C})\,:\, q_h|_K\in\mathbb{P}_0(K),\,\quad\forall K\in \CT_h\},
\end{equation*}
where the degrees of freedom are one per element, given by the value of the function on the element. Moreover and for simplicity,  we define $\mathcal{X}_h:=\bV_h\times\Q_h$.

\subsection{The discrete sesquilinear  forms}
\label{subsec:disc_b_f}
Now we will introduce the discrete versions of the sesquilinear forms $a(\cdot, \cdot)$, $b(\cdot,\cdot)$ and $c(\cdot, \cdot)$. Let us  begin by introducing the discrete counterpart of $a^{\nabla}(\cdot,\cdot)$, which is defined by
\begin{equation*}
\label{ec:ahch}
a^{\nabla}_h(\bw_h,\bv_h)
:=\sum_{\E\in\CT_h}a_h^{\nabla,\E}(\bw_h,\bv_h),
\end{equation*}
where $a_h^{\nabla,\E}(\cdot,\cdot)$ is the sesquilinear form defined on
$\bV_{h}^{\E}\times\bV_{h}^{\E}$ by
\begin{equation*}
\label{21}
a_h^{\nabla,\E}(\bw_{h},\bv_{h})
:=a^{\nabla,\E}\big(\boldsymbol{\Pi}^{\nabla,\E} \bw_h,\boldsymbol{\Pi}^{\nabla,\E} \bv_h\big)
+S^{\E}\big(\bw_h-\bPi^{\nabla,\E} \bw_h,\bv_h-\bPi^{\nabla,\E} \bv_h\big),
\end{equation*}
for every $\bw_h,\bv_h\in\bV_{h}^{\E}$, where $S^{\E}(\cdot,\cdot)$ denotes any symmetric positive definite bilinear form that satisfies
$$c_0 a^{\nabla,\E}(\bw_h,\bw_h)\leq S^{\E}(\bw_h,\bw_h)\leq c_1a^{\nabla,\E}(\bw_h,\bw_h)\qquad \forall \bw_h\in\bV_{h}^{\E} \cap \text{ker}(\bPi^{\nabla,\E}),$$ 
with $c_0$ and $c_1$ being positive constants depending on the mesh assumptions. 
Moreover, it is easy to check that $a_h^{\nabla,\E}(\cdot,\cdot)$ must admit the following properties:
\
\begin{itemize}
\item Consistency: For all $\boldsymbol{q}_h\in[\mathbb{P}_1(\E)]^2$ and $\bv_h\in\mathbf{V}_h^\E$
\begin{equation*}
a_h^{\nabla,\E}(\boldsymbol{q}_h,\bv_h)=a^{\nabla,\E}(\boldsymbol{q}_h,\bv_h),
\end{equation*}
\item Stability: There exists two positive constants $\alpha_*$ and $\alpha^*$, independent of $h$ and $\E$ such that
\begin{equation*}
\alpha_*a^{\nabla,\E}(\bv_h,\bv_h)\leq a_h^{\nabla,\E}(\bv_h,\bv_h)\leq\alpha^*a^{\nabla,\E}(\bv_h,\bv_h)\quad\forall\bv_h\in\mathbf{V}_h^\E.
\end{equation*}
\end{itemize}
Also, we introduce the discrete counterpart of $a^{\boldsymbol{\beta},K}(\cdot,\cdot)$, which is defined by 
\begin{equation}\label{eq:convecdisc}
\widehat{a}_{h}^{\bb{\beta},\E}(\bw_h,\bv_h) := \frac{1}{2}\left(a_{h}^{\boldsymbol{\beta},\E}(\bw_h,\bv_h) - a_{h}^{\boldsymbol{\beta},\E}(\bv_h,\bw_h)\right) \quad \forall \bw_h,\bv_h \in \bV_h^\E,
\end{equation}
where $a_h^{\bb{\beta},\E}(\bw_h,\bv_h):=((\Pi_{0}^{0,\E}\nabla\bw_h)\bb{\beta},\bb{\Pi}_0^\E\bv_h)_{0,\E}$ and $\Pi_{0}^{0,\E} : \nabla \bV_h^\E \rightarrow \mathbb{P}_0(\E)^{2\times 2}$ is the $\L^2$-orthogonal projection onto $\mathbb{P}_0(\E)^{2\times 2}$. Then, we define 
\begin{equation*}
\widehat{a}^{\bb{\beta}}_h(\bw_h,\bv_h)
:=\sum_{\E\in\CT_h}\widehat{a}_h^{\bb{\beta},\E}(\bw_h,\bv_h),
\end{equation*}

Now, we define  $c_h^{\E}(\cdot,\cdot)$ as the sesquilinear form defined on
$\bV_{h}^{\E}\times\bV_{h}^{\E}$ by
\begin{equation*}
\label{eq_ch}
c_h^{\E}(\bw_h,\bv_h)
:=c^{\E}\big(\bPi_{0}^{\E} \bw_h,\bPi_{0}^{\E} \bv_h\big)
\qquad \forall \bw_h,\bv_h\in\bV_{h}^{\E}. 
\end{equation*}

According to the definition of  $c_h^\E(\cdot,\cdot)$ and the approximation properties of $\bPi_0^\E$, it is not difficult to prove that:
\begin{equation*}
c_h^\E(\boldsymbol{q}_h,\bv_h)=c^\E(\boldsymbol{q}_h,\bv_h),\qquad \boldsymbol{q}_h\in [\mathbb{P}_1(\E)]^2,
\end{equation*}
and  $\|\bv-\bPi_0^\E\bv\|_{0,\E}=\inf_{\boldsymbol{q}_h\in [\mathbb{P}_1(\E)]^2}\|\bv-\boldsymbol{q}\|_{0,\E}$.
We remark that for all $\E\in\CT_h$, the local sesquilinear form $b^{\E}(\cdot,\cdot)$ is computable from the degrees of freedom. In fact, for any function $\bv_h\in \bV_h^K$  and $q_{h}\in\Q_{h}$ we have
\begin{equation*}
b^{\E}(\bv_h, q_h)=\int_\E\div\bv_hq_h=\sum_{\ell\subset\partial \E}\int_\ell(q_{h}\bn)\cdot\bv_h.
\end{equation*}
Finally, the global discrete counterparts of $a(\cdot,\cdot)$ and $c(\cdot,\cdot)$ are defined by
\begin{equation*}
a_h(\bw_h,\bv_h)
:= \widehat{a}_h^{\bb{\beta}}(\bw_h,\bv_h) + a_h^{\nabla}(\bw_h,\bv_h), \quad c_h(\bw_h,\bv_h)
:=\sum_{\E\in\CT_h}c_h^{\E}(\bw_h,\bv_h).
\end{equation*} 
With all these ingredients at hand, the discrete version of \eqref{def:oseen_system_weak} reads as follows: Find $\lambda_h\in\mathbb{C}$ and $(\boldsymbol{0},0)\neq (\bu_h,p_h)\in \mathcal{X}_h$ such that
\begin{equation}\label{def:oseen_system_weak_disc}
	\left\{
\begin{array}{rcll}
 a_h(\bu_h,\bv_h)+b(\bv_h,p_h) & = &\lambda_h c_{h}(\boldsymbol{u}_h,\bv_h) &\forall\bv_h\in \mathbf{V}_h,\\
\ds b(\bu_h,q_h) & = & 0 &\forall\ q_h\in \Q_h.
\end{array}
\right.
\end{equation}

From \eqref{def:oseen_system_weak_disc} we have for the sesquilinear form $b(\cdot,\cdot)$, the following inf-sup condition given by \cite[Lemma 4.3]{MR4332146}. More precisely, 
there exists a positive constant $\bar{\beta}$, independent of $h$, such that 
\begin{equation}
\label{eq:inf_sup_disc}
\ds\sup_{\bb{0}\neq\bv_h\in \bV_h}\frac{b(\bv_h,q_h)}{\|\bv_h\|_{1,\O}}\geq\bar{\beta}\|q_h\|_{0,\O}\qquad\forall q_h\in \Q_h.
\end{equation}

On the other hand, observe that \eqref{eq:convecdisc} vanishes when $\bw_h=\bv_h$, obtaining easily the coercivity of $a_h(\cdot,\cdot)$, which  depends only of $a_h^{\nabla}(\cdot,\cdot)$. Then, following \cite[Section 3.2]{MR4332146}, there exists $\tilde{\alpha}>0$, independent of $h$ such that 
\begin{equation}
\label{eq:elliptic_disc}
a_h(\bv_h,\bv_h)\geq \tilde{\alpha} \|\bv_h\|_{1,\O}^2\quad\forall\,\bv_h\in \bV_h.
\end{equation}
%

Let us introduce the discrete solution operator $\bT_h$ defined  by 
\begin{align*}
\bT_h:\H_0^1(\O,\mathbb{C})^2\rightarrow \bV_{h},\quad
           \boldsymbol{f}\mapsto \bT_h\boldsymbol{f}:=\widehat{\bu}_h, 
\end{align*}
where $\widehat{\bu}_h$ is the solution of the following discrete source problem: Given a source $\boldsymbol{f}\in \H_0^1(\O,\mathbb{C})^2$, find
$\widehat{\bu}_h\in \bV_h$ such that
\begin{equation}\label{eq:weak_oseen_source_disc}
\left\{
\begin{array}{rcll}
 a_h(\widehat{\bu}_h,\bv_h)+b(\bv_h,\widehat{p}_h) & = &c_{h}(\boldsymbol{f},\bv_h) &\forall\bv_h\in \bV_{h},\\
\ds b(\widehat{\bu}_h,q_h) & = & 0 &\forall\ q_h\in \Q_h.
\end{array}
\right.
\end{equation}

From \eqref{eq:inf_sup_disc} and \eqref{eq:elliptic_disc} and  the Babu\v ska-Brezzi theory, we have that $\bT_h$ is well defined.
Moreover, as in the continuous case,  it is easy to check that $(\kappa_h,\bu_h)$ is an eigenpair of $\bT_h$ if only if there exists $(\bu_h,p_h)\in\mathcal{X}_h$ such that $(\lambda_h, (\bu_h,p_h))\in\mathbb{C}\times\mathcal{X}_h$ solves \eqref{def:oseen_system_weak_disc}, i.e. $\ds \bT_h\bu_h=\mu_h\bu_h$ with $\mu_h:=1/\lambda_h$ and $\l_h\neq0$.

Analogously to the continuous case, we rewrite problem \eqref{eq:weak_oseen_source_disc} as follows: Given $\boldsymbol{f}\in[\L^2(\O)]^2$, find  $(\bu_h,p_h)\in \mathcal{X}_h$ such that
\begin{equation*}
\label{eq:formulation_A_h}
A_h((\bu_h,p_h),(\bv_h,q_h))=c_{h}(\boldsymbol{f},\bv_h),\quad (\bv_h,q_h)\in \bV_{h}\times \Q_h,
\end{equation*}
where $A_h: \mathcal{X}_h\times \mathcal{X}_h\rightarrow\mathbb{C}$ is the sesquilinear form defined by
 \begin{equation*}
 \label{ec:forma_Ah}
A_h((\bu_h,p_h),(\bv_h,q_h)):= a_h(\bu_h,\bv_h)+b(\bv_h,p_h)- b(\bu_h,q_h),
\end{equation*}
for all  $(\bu_h,p_h), (\bv_h,q_h)\in \mathcal{X}_h$.


\subsection{Technical approximation results}
The following approximation result for
polynomials in star-shaped domains (see for instance \cite{MR2373954}),
 is derived from results of interpolation between Sobolev spaces
(see for instance \cite[Theorem~I.1.4]{MR851383}) leading to an analogous
result for integer values of $s$. 

\begin{lemma}[Existence of a virtual approximation operator]
\label{lmm:bh}
If assumption {\bf A1} is satisfied, then  for every $s$ with 
$0\le s\le 1$ and for every $\bv\in[\H^{1+s}(K,\mathbb{C})]^2$, there exists
$\bv_{\pi}\in\mathbb{P}_1(\E)$ such that
$$
\left\|\bv-\bv_{\pi}\right\|_{0,\E}
+h_{\E}\left|\bv-\bv_{\pi}\right|_{1,\E}
\lesssim  h_{\E}^{1+s}\left\|\bv\right\|_{1+s,\E},
$$
where the hidden constant  depends only on mesh regularity constant  $\gamma$.
\end{lemma}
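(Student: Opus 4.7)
\medskip

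\noindent\textbf{Proof plan.} The strategy is the classical Dupont--Scott/Bramble--Hilbert approach on star-shaped domains, combined with real interpolation between integer Sobolev indices. Because the statement is componentwise in the two vector components, I will argue in the scalar case $v\in H^{1+s}(K,\mathbb{C})$ and apply the result to each component of $\bv$.

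\medskip

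\noindent\textbf{Step 1: integer endpoints.} The plan is to define $v_\pi$ as the averaged Taylor polynomial of degree $1$ associated with the ball $B\subset K$ of radius at least $\gamma h_K$ guaranteed by assumption \textbf{A1}. Since $K$ is star-shaped with respect to $B$, the Dupont--Scott theory applies and one obtains, for every integer $m\in\{0,1\}$ and every $v\in H^{m+1}(K)$, the Bramble--Hilbert-type bound
\begin{equation*}
\|v-v_\pi\|_{0,K}+h_K\,|v-v_\pi|_{1,K}\;\lesssim\; h_K^{m+1}\,|v|_{m+1,K},
\end{equation*}
where the hidden constant depends only on $\gamma$. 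This fixes the two endpoint estimates $s=0$ and $s=1$.

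\medskip

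\noindent\textbf{Step 2: interpolation in $s$.} To pass from $s\in\{0,1\}$ to the full range $0\le s\le 1$, the plan is to view the map $T: v\mapsto v-v_\pi$ as a bounded linear operator acting simultaneously on $H^1(K)$ and on $H^2(K)$, with values in the space $X_K:=L^2(K)+h_K H^1(K)$ endowed with the natural scaled norm $\|w\|_{X_K}:=\|w\|_{0,K}+h_K|w|_{1,K}$. By Step~1, $T$ is bounded from $H^1(K)$ to $X_K$ with constant $\lesssim h_K$, and from $H^2(K)$ to $X_K$ with constant $\lesssim h_K^{2}$. Real (or complex) interpolation of Sobolev spaces, as in Theorem~I.1.4 of the quoted reference, then yields that $T$ is bounded from the intermediate space $[H^1(K),H^2(K)]_s = H^{1+s}(K)$ to $X_K$ with norm $\lesssim h_K^{1+s}$, which is exactly the claimed estimate.

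\medskip

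\noindent\textbf{Step 3: scaling and dependence on $\gamma$.} The only delicate point is to ensure that the interpolation constant does not blow up with $h_K$. The standard device is to transport the problem to a reference configuration of unit diameter by a dilation, apply interpolation on the reference domain (where the constant depends only on the shape, hence only on $\gamma$ through the ratio of inner ball radius to diameter), and then scale back. Because shifts and rotations do not affect Sobolev seminorms, assumption \textbf{A1} is precisely what is needed to keep the family of rescaled elements uniformly in a compact class of star-shaped domains, giving a uniform interpolation constant. Vectorizing componentwise concludes the proof; I expect the main technical obstacle to be this scaling/uniformity argument, since it is where the geometric hypothesis \textbf{A1} is really used.
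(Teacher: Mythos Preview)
Your proposal is correct and matches the paper's own treatment: the paper does not give a proof but simply cites the standard result for star-shaped domains (Brenner--Scott, \cite{MR2373954}) together with interpolation between Sobolev spaces (\cite[Theorem~I.1.4]{MR851383}) to pass from integer to fractional $s$. Your Steps~1--3 are precisely this Bramble--Hilbert/averaged Taylor polynomial argument followed by real interpolation and scaling, so there is nothing to add.
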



\noindent We also have the following result that provides the existence of an interpolated function for the velocity, together with an approximation error (see \cite[Lemma  4.2]{MR4332146}).
\begin{lemma}[Existence of an interpolation operator]
\label{estimate2}
Under the assumptions  {\bf A1} and  {\bf A2}, let $\bv\in \H_0^1(\O,\mathbb{C})^2\cap[\H^{1+s}(\O,\mathbb{C})]^2$, with $0\leq s\leq 1$. Then, there exists $\bv_I\in \bV_h$ such that
\begin{equation*}
\|\bv-\bv_I\|_{0,K}+h_K|\bv-\bv_I|_{1,K}\lesssim h_K^{1+s}|\bv|_{1+s,K},
\end{equation*}
where the hidden constant is  positive and independent of $h_K$.
\end{lemma}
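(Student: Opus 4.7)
The plan is to construct $\bv_I$ element by element through the degrees of freedom of $\bV_h^{\E}$, and then combine a polynomial approximation estimate (Lemma~\ref{lmm:bh}) with a stability bound for virtual functions in terms of their degrees of freedom. This is the standard VEM interpolation strategy, and since the velocity space $\bV_h^{\E}$ contains $[\mathbb{P}_1(\E)]^2$, we expect the order $h_{\E}^{1+s}$ to emerge naturally.

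First, for each $\E\in\CT_h$ I would define $\bv_I|_\E\in\bV_h^\E$ as the unique function whose degrees of freedom $\mathcal{V}_\E^h$, $\mathcal{L}_\E^h$ and $\mathcal{E}_\E^h$ agree with those of $\bv$. Since the DoFs are global (vertex values are shared, normal moments across edges are continuous), the patched function $\bv_I$ belongs to $\bV_h\subset \H_0^1(\O,\mathbb{C})^2$ (the homogeneous boundary condition follows because $\bv|_{\DO}=\boldsymbol{0}$ forces all boundary DoFs to vanish). Note that the edge values of $\bv_I$ on $\partial\E$ are well defined because $\bv\in[\H^{1+s}(\E)]^2$ with $s\geq 0$ has a trace in $[\H^{1/2+s}(\partial\E)]^2\hookrightarrow[C^0(\partial\E)]^2$ for $s>0$ (the case $s=0$ is handled by a density/limit argument, or by working with averaged DoFs).

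Next, I would pick $\bv_\pi\in[\mathbb{P}_1(\E)]^2$ from Lemma~\ref{lmm:bh} and decompose
\begin{equation*}
\bv-\bv_I=(\bv-\bv_\pi)+(\bv_\pi-\bv_I).
\end{equation*}
The first term is directly bounded by $h_\E^{1+s}|\bv|_{1+s,\E}$ via Lemma~\ref{lmm:bh}. For the second term, since $[\mathbb{P}_1(\E)]^2\subset\bV_h^\E$, the function $\bw_h:=\bv_\pi-\bv_I$ is in $\bV_h^\E$, and its degrees of freedom coincide with those of $\bv_\pi-\bv$. The key step is to establish a \emph{stability estimate} on the virtual space, namely
\begin{equation*}
\|\bw_h\|_{0,\E}+h_\E|\bw_h|_{1,\E}\lesssim \sum_{\text{DoFs }\chi}\bigl|\chi(\bw_h)\bigr|\cdot(\text{scaled weight}),
\end{equation*}
so that the DoFs of $\bv_\pi-\bv$, in turn, can be controlled by the trace and moment estimates against $\|\bv-\bv_\pi\|_{0,\E}+h_\E|\bv-\bv_\pi|_{1,\E}\lesssim h_\E^{1+s}|\bv|_{1+s,\E}$. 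Combining the two bounds yields the claim.

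The main obstacle is precisely this stability/inverse bound in the $\H^1$-seminorm for functions in $\bV_h^\E$ characterised by their DoFs, since elements of $\bV_h^\E$ are not explicitly known and are governed by a Stokes-like local PDE with a Lagrange multiplier. The usual way around this, which I would adopt, is a scaling argument on the reference polygon together with assumptions \textbf{A1}--\textbf{A2} (star-shapedness and vertex separation), combined with a trace inequality $\|\bw_h\|_{1/2,\partial\E}\lesssim \|\bw_h\|_{1,\E}$ and the control of the interior moment DoFs by $\|\bw_h\|_{0,\E}\|\bg^{\perp}\|_{0,\E}$ for $\bg^{\perp}\in\nabla\mathbb{P}_2(\E)^{\perp}$. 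Once stability is in place, the estimate follows by the triangle inequality, as noted in \cite{MR4332146}.
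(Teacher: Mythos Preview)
The paper does not give its own proof of this lemma; it simply cites \cite[Lemma~4.2]{MR4332146} and states the result. Your proposal reconstructs precisely the standard argument behind that citation: define $\bv_I$ by matching degrees of freedom, split $\bv-\bv_I=(\bv-\bv_\pi)+(\bv_\pi-\bv_I)$, bound the first piece by Lemma~\ref{lmm:bh}, and control the second via a DoF-stability estimate on $\bV_h^\E$ together with the fact that $[\mathbb{P}_1(\E)]^2\subset\bV_h^\E$. This is exactly the route taken in \cite{MR4332146} (and in the Stokes-VEM literature it builds on), so your approach is consistent with what the paper invokes.

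One minor remark: your treatment of the borderline case $s=0$ (``density/limit argument, or averaged DoFs'') is where the actual technical work hides, since vertex values are not well defined for general $\H^1$ functions. In \cite{MR4332146} this is handled by first building a Cl\'ement/Scott--Zhang-type quasi-interpolant on an auxiliary triangular submesh and then transferring to the virtual space, rather than by taking pointwise DoFs of $\bv$ directly; you may want to phrase the construction that way if you intend to cover $s=0$ rigorously.
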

\noindent Finally, let $\mathcal{P}_{h}:\L^2(\O,\mathbb{C})\to \left\{q\in \L^2(\O,\mathbb{C}): q|_{\E}\in\mathbb{P}_{0}(\E)\right\}$  be the $\L^2(\O,\mathbb{C})$-orthogonal projector which satisfies the following approximation property.
\begin{lemma}
\label{estimate3}
If $0\leq s\leq 1$, it holds
\begin{equation*}
\|q-\mathcal{P}_{h}(q)\|_{0,\O}\lesssim h^{s}\|q\|_{s,\O} \qquad \forall q\in\H^s(\O,\mathbb{C})\cap \Q.
\end{equation*}
\end{lemma}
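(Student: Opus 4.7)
The estimate is local because $\mathcal{P}_h$ acts elementwise on $\mathbb{P}_0$: for each $K\in\mathcal{T}_h$ one has $\mathcal{P}_h(q)|_K = |K|^{-1}\int_K q$. Hence the global estimate will follow by squaring, summing over elements, and using that $h_K\le h$ together with the bound $\sum_K\|q\|_{s,K}^2 \le \|q\|_{s,\Omega}^2$ (the sum of local fractional norms is controlled by the global one, since the double-integral defining the seminorm is taken over a subset of $\Omega\times\Omega$).

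Thus it suffices to establish the local estimate
\[
\|q-\mathcal{P}_h q\|_{0,K}\;\lesssim\;h_K^{s}\,\|q\|_{s,K},\qquad 0\le s\le 1,
\]
on each $K\in\mathcal{T}_h$. I would handle the two extreme values of $s$ first and then interpolate. For $s=0$, since $\mathcal{P}_h$ is an $L^2$-orthogonal projection, $\|q-\mathcal{P}_h q\|_{0,K}\le\|q\|_{0,K}$ is immediate. For $s=1$, the star-shapedness assumption \textbf{A1} allows us to invoke a standard Bramble--Hilbert / Poincaré argument on the polygon $K$: since $\mathcal{P}_h q|_K$ is the mean of $q$ over $K$ and $\mathbb{P}_0(K)\subset\mathrm{ker}(\nabla)$, a scaled Poincaré inequality on $K$ yields
\[
\|q-\mathcal{P}_h q\|_{0,K}\;\lesssim\;h_K\,|q|_{1,K},
\]
where the hidden constant depends only on the mesh-regularity constant $\gamma$ from \textbf{A1}.

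For intermediate $s\in(0,1)$ I would use real interpolation between $L^2(K)$ and $H^1(K)$ (or equivalently a direct argument via the $K$-functional for the Hilbert scale $H^s(K) = [L^2(K),H^1(K)]_{s,2}$). Viewing the operator $T:=I-\mathcal{P}_h$ as a bounded linear map from $L^2(K)$ to $L^2(K)$ with norm $1$ and from $H^1(K)$ to $L^2(K)$ with norm $\lesssim h_K$, interpolation immediately yields
\[
\|Tq\|_{0,K}\;\lesssim\;h_K^{s}\,\|q\|_{s,K}.
\]
Squaring and summing over the elements of $\mathcal{T}_h$, as indicated above, gives the claimed global bound.

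\textbf{Main obstacle.} The only nontrivial point is making the interpolation argument independent of the element $K$ (so that the constant depends only on $\gamma$ and not on the particular polygon). This is handled by the uniform star-shapedness granted by \textbf{A1}: the constants in the $s=0$ and $s=1$ estimates are uniform over the shape-regular family $\{\mathcal{T}_h\}_{h>0}$, and therefore so is the interpolated constant. Everything else is a routine assembly of standard ingredients.
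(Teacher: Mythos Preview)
Your argument is correct: the elementwise $L^2$-projection onto $\mathbb{P}_0$ is handled locally, the endpoint cases $s=0$ and $s=1$ follow from the contraction property and a Poincar\'e inequality on star-shaped polygons (assumption \textbf{A1}), and real interpolation bridges the gap with a constant depending only on the shape-regularity parameter~$\gamma$. The passage from local to global via $\sum_K\|q\|_{s,K}^2\le\|q\|_{s,\Omega}^2$ is also fine for the Slobodeckij seminorm, as you observe.

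As for comparison: the paper does not actually give a proof of this lemma. It is stated as a standard approximation property of the piecewise-constant $L^2$-projector and then used repeatedly in the error analysis. Your write-up therefore supplies precisely the routine justification the paper omits.
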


As in the continuous case, we also have an inf-sup condition for $A_h(\cdot,\cdot)$. This is stated in the following result.

\begin{lemma}\label{eq:infsupdiscreta}
The following inf-sup conditions for $A_h(\cdot,\cdot)$ holds
\begin{equation*}
\begin{split}
\underset{(\bb{0},0)\neq(\bb{w}_h,r_h) \in \mathcal{X}_h}{\inf}\underset{(\bb{0},0)\neq(\bb{v}_h,q_h) \in \mathcal{X}_h}{\sup} \dfrac{A_h((\bb{v}_h,q_h),(\bb{w}_h,r_h))}{\|(\bb{v}_h,q_h)\|\|(\bb{w}_h,r_h)\|} &= \widetilde{\gamma}, \\
\underset{(\bb{0},0)\neq(\bb{v}_h,q_h) \in \mathcal{X}_h}{\inf}\underset{(\bb{0},0)\neq(\bb{w}_h,r_h) \in \mathcal{X}_h}{\sup} \dfrac{A_h((\bb{v}_h,q_h),(\bb{w}_h,r_h))}{\|(\bb{v}_h,q_h)\|\|(\bb{w}_h,r_h)\|} &= \widetilde{\gamma}.
\end{split}
\end{equation*}
\end{lemma}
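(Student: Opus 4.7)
I would mirror the proof of the continuous counterpart (Lemma 2.4 of \cite{MR4728079}) using only the two ingredients already available at the discrete level: the uniform coercivity \eqref{eq:elliptic_disc} of $a_h$ on $\bV_h$, and the discrete inf-sup condition \eqref{eq:inf_sup_disc} for $b$. A preliminary observation is that $a_h$ is continuous with a constant $M$ independent of $h$. This follows from the stability of $a_h^{\nabla,\E}$, the $\L^2$-boundedness of the projectors $\Pi_0^{0,\E}$ and $\bPi_0^{\E}$, and $\bb{\beta} \in \L^{\infty}(\O,\mathbb{C})^2$. Note that the skew character of $\widehat a_h^{\bb{\beta}}$ does not play any direct role here; what matters is that it vanishes on the diagonal so that $a_h$ inherits coercivity from $a_h^{\nabla}$.

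\textbf{First inf-sup.} Fix $(\bw_h, r_h) \in \mathcal{X}_h$. By the standard Fortin construction associated with \eqref{eq:inf_sup_disc}, there exists $\bs_h \in \bV_h$ satisfying $b(\bs_h, r_h) \geq \bar{\beta}\|r_h\|_{0,\O}^2$ and $\|\bs_h\|_{1,\O} = \|r_h\|_{0,\O}$. I would then test with $(\bv_h, q_h) := (\bw_h - \delta\, \bs_h, r_h)$ for a small $\delta > 0$ to be fixed, obtaining
\begin{equation*}
A_h((\bv_h, q_h), (\bw_h, r_h)) = a_h(\bw_h, \bw_h) - \delta\, a_h(\bs_h, \bw_h) + \delta\, b(\bs_h, r_h).
\end{equation*}
The first term is bounded below by $\tilde{\alpha}\|\bw_h\|_{1,\O}^2$ via \eqref{eq:elliptic_disc}, the last by $\delta \bar{\beta}\|r_h\|_{0,\O}^2$ by construction of $\bs_h$, and the cross term satisfies $|a_h(\bs_h, \bw_h)| \leq M\|r_h\|_{0,\O}\|\bw_h\|_{1,\O}$ by continuity. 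A Young-type estimate, followed by choosing $\delta$ small enough depending on $\tilde{\alpha}$, $\bar{\beta}$, $M$, absorbs the cross term and gives
\begin{equation*}
A_h((\bv_h, q_h), (\bw_h, r_h)) \gtrsim \|\bw_h\|_{1,\O}^2 + \|r_h\|_{0,\O}^2 = \|(\bw_h, r_h)\|^2.
\end{equation*}
Since $\|(\bv_h, q_h)\| \leq \|\bw_h\|_{1,\O} + \delta\|\bs_h\|_{1,\O} + \|r_h\|_{0,\O} \lesssim \|(\bw_h, r_h)\|$, dividing produces the first inf-sup with constant $\widetilde{\gamma}$ depending only on $\tilde{\alpha}$, $\bar{\beta}$, and $M$.

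\textbf{Second inf-sup and main obstacle.} The transposed condition is proved by the same scheme with the roles of the arguments swapped: given $(\bv_h, q_h) \in \mathcal{X}_h$, I would set $(\bw_h, r_h) := (\bv_h - \delta\, \bs_h, q_h)$, where $\bs_h \in \bV_h$ is now chosen so that $-b(\bs_h, q_h) \geq \bar{\beta}\|q_h\|_{0,\O}^2$ with $\|\bs_h\|_{1,\O} = \|q_h\|_{0,\O}$. The same coercivity/Young argument applies verbatim, again yielding the bound with the same constant $\widetilde{\gamma}$. The main technical point to watch is that the continuity constant $M$ of $a_h$ is independent of $h$; this relies on the uniform boundedness in $h$ of the VEM projectors $\bPi^{\nabla,\E}$, $\bPi_0^{\E}$ and $\Pi_0^{0,\E}$ as established in \cite{MR4332146}. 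Once this is verified, the proof is essentially a direct discrete translation of the Brezzi-style argument used at the continuous level.
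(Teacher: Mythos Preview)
Your proof is correct and relies on the same two ingredients the paper uses: the uniform coercivity \eqref{eq:elliptic_disc} of $a_h$ (thanks to the skew part vanishing on the diagonal) and the discrete inf--sup \eqref{eq:inf_sup_disc} for $b$. The difference is purely in packaging. The paper argues in three steps: it records $A_h((\bv_h,q_h),(\bv_h,q_h))\geq C\|\bv_h\|_{1,\O}^2$, then builds a second test direction $(-\bw_h,0)$ from the inf--sup of $b$ that controls $\|q_h\|_{0,\O}^2$ up to a deficit $C_{\bb{\beta},\nu}\|(\bv_h,q_h)\|^2$, and finally invokes \cite[Lemma~5]{BRAACK20111126} to combine the two estimates into the global inf--sup. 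You instead build the composite test function $(\bw_h-\delta\bs_h,r_h)$ in one shot and absorb the cross term via Young's inequality, which is exactly what the Braack lemma does internally. Your route is more self-contained (no external reference needed) and makes the dependence of $\widetilde{\gamma}$ on $\tilde{\alpha},\bar{\beta},M$ explicit; the paper's route is slightly more modular. The point you flag as the ``main obstacle''---uniform-in-$h$ continuity of $a_h$---is indeed the only thing to verify, and your justification via the stability of $a_h^{\nabla,\E}$ and the $\L^2$-boundedness of $\Pi_0^{0,\E}$, $\bPi_0^{\E}$ is correct.
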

\begin{proof}
The proof is divided in three steps. 

\noindent \textbf{Step 1.} Observe that
\begin{equation*}
A_h((\bb{v}_h,q_h),(\bb{v}_h,q_h)) = a_h(\bb{v}_h,\bb{v}_h) \geq C\|\bb{v}_h\|_{1,\O}^{2},
\end{equation*}
and invoking the Archimedean property, there exists $\varepsilon \in (0,1]$ such that
\begin{equation*}
A_h((\bb{v}_h,q_h),(\bb{v}_h,q_h)) = a_h(\bb{v}_h,\bb{v}_h) \geq C\|\bb{v}_h\|_{1,\O}^{2} \geq C\varepsilon\|(\bb{v}_h,q_h)\|^{2}.
\end{equation*}
\textbf{Step 2.} For $\bb{w}_h \in \bV_h$ we have
\begin{equation*}
A_h((\bb{v}_h,q_h),(\bb{w}_h,0)) = a_h(\bb{v}_h,\bb{w}_h) + b(\bb{w}_h,q_h).
\end{equation*}
Invoking the inf-sup condition for $b(\cdot,\cdot)$, for $q_h \in Q_h$ there exists $\widetilde{\bb{w}}_h \in \bV_h$ such that 
\begin{equation*}
b(\widetilde{\bb{w}}_h,q_h) \geq \widetilde{\beta}\|\widetilde{\bb{w}}_h\|_{1,\O}\|q_h\|_{0,\O},
\end{equation*}
and defining  $\bb{w}_h := \dfrac{\|q_h\|_{0,\O}}{\|\widetilde{\bb{w}}_h\|_{1,\O}}\widetilde{\bb{w}}_h$ we have
\begin{equation*}
\|\bb{w}_h\|_{1,\O} = \|q_h\|_{0,\O} \quad \text{and} \quad b(\bb{w}_h,q_h) \geq \widetilde{\beta}\|q_h\|_{0,\O}^{2}\quad\forall q_h\in Q_h.
\end{equation*}
Therefore, we obtain $-b(\bb{w}_h,q_h) \leq -\widetilde{\beta}\|q_h\|_{0,\O}^2$. 

On the other hand it is direct the following estimate
\begin{equation*}
a_h(\bb{v}_h,\bb{w}_h) \leq C\max\{\|\bb{\beta}\|_{\infty,\O},\nu\}\|\bb{v}_h\|_{1,\O}\|\bb{w}_h\|_{1,\O} \leq C_{\bb{\beta},\nu}\|(\bb{v}_h,q_h)\|^2.
\end{equation*}
Hence we obtain
\begin{equation*}
A_h((\bb{v}_h,q_h),(\bb{w}_h,0)) \leq -\widetilde{\beta}\|q_h\|_{0,\O}^2 + C_{\bb{\beta},\nu}\|(\bb{v}_h,q_h)\|^2.
\end{equation*}
\textbf{Step 3.} Substituting $\bb{w}_h$ by $-\bb{w}_h$ in the last estimate on \textbf{Step 2}, we obtain 
\begin{equation}
A_h((\bb{v}_h,q_h),(-\bb{w}_h,0)) = -A_h((\bb{v}_h,q_h),(\bb{w}_h,0)) \geq \widetilde{\beta}\|q_h\|_{0,\O}^2 - C_{\bb{\beta},\nu}\|(\bb{v}_h,q_h)\|^{2}.
\end{equation}
With the above estimate at hand and invoking \cite[Lemma 5]{BRAACK20111126}, we conclude directly that
\begin{equation*}
A_h((\bb{v}_h,q_h),(\bb{w}_h,r_h)) \geq \widetilde{\gamma}\|(\bb{v}_h,q_h)\|\|(\bb{w}_h,r_h)\|.
\end{equation*}
This concludes the proof.
\end{proof}
As a consequence of Lemma \ref{eq:infsupdiscreta}, we have that $A_h(\cdot,\cdot)$ is stable in the sense that given $(\bb{v}_h,q_h) \in \mathcal{X}_h$, there exists a pair $(\bb{w}_h,s_h) \in \mathcal{X}_h$ such that $\|\bb{v}_h\|_{1,\O} + \|q_h\|_{0,\O} \leq A_h((\bb{v}_h,q_h),(\bb{w}_h,s_h))$ and $\|\bb{w}_h\|_{1,\O}+\|s_h\|_{0,\O} \leq C$.

\subsection{The dual discrete eigenvalue problem}
With our discrete spaces at hand, we are in position to introduce the virtual element discretization of \eqref{def:oseen_system_weak_dual_eigen} which reads as follows:  Find $\lambda_h\in\mathbb{C}$ and $(\boldsymbol{0},0)\neq(\bu_h^*,p_h^*)\in \mathcal{X}_h$ such that 
\begin{equation*}\label{def:stokes_system_weak_disc}
	\left\{
	\begin{array}{rcll}
a_h(\bv_h,\bu_h^*) - b(\bv_h,p_h^*)&=&\overline{\lambda_h}c_h(\bv_h,\bu_h^*)&\forall \bv_h\in\boldsymbol{V}_h,\\
-b(\bu_h^*,q_h)&=&0&\forall q_h\in\Q_h.
\end{array}
	\right.
\end{equation*}
Then, we introduce the adjoint of $\bT_h$, which is defined as follows
\begin{equation*}\label{eq:operador_solucion_dual_u_h}
	\bT_h^*: \H^{1}(\O,\mathbb{C})^2 \rightarrow \boldsymbol{V}_h, \qquad \boldsymbol{f}^*\mapsto \bT_h^*\boldsymbol{f}^*:=\widehat{\bu}_h^*, 
\end{equation*}
where the pair  $(\widehat{\bu}_h^*, \widehat{p}_h^*)\in \mathcal{X}_h$ is the solution of the following well posed dual discrete source problem
\begin{equation}\label{def:oseen_system_weak_disc_source_dual}
	\left\{
	\begin{array}{rcll}
a_h(\bv_h^*,\widehat{\bu}_h^*) - b(\bv_h,\widehat{p}_h^*)&=& c_h(\bv_h,\boldsymbol{f}^*)&\forall \bv_h\in\boldsymbol{V}_h,\\
-b(\widehat{\bu}_h^*,q_h)&=&0&\forall q_h\in\Q_h.
\end{array}
	\right.
\end{equation}
implying  that $\bT_h^*$ is well defined due to the Babu\^{s}ka-Brezzi theory. 

The discrete counterpart of \eqref{eq:eigenA} is defined by
\begin{equation*}
\widehat{A}_{h}((\bb{v}_h,q_h),(\bb{u}_h^*,p_h^*)):=a_{h}(\bb{v}_h,\bb{u}_h^*)-b(p_h,\bb{v})+b(\bb{u}_h^*,q_h).
\end{equation*}

As in the primal problem, we also have an inf-sup condition for $A_h(\cdot,\cdot)$. The proof is similar to those in Lemma \ref{eq:infsupdiscreta}, so we skip the details.

\begin{lemma}\label{eq:infsupdiscretadual}
The following inf-sup conditions for $\widehat{A}_h(\cdot,\cdot)$ holds
\begin{equation*}
\begin{split}
\underset{(\bb{0},0)\neq(\bb{w}_h,r_h) \in \mathcal{X}_h}{\inf}\underset{(\bb{0},0)\neq(\bb{v}_h,q_h) \in \mathcal{X}_h}{\sup} \dfrac{\widehat{A}_h((\bb{v}_h,q_h),(\bb{w}_h,r_h))}{\|(\bb{v}_h,q_h)\|\|(\bb{w}_h,r_h)\|} &= \widetilde{\gamma}^*, \\
\underset{(\bb{0},0)\neq(\bb{v}_h,q_h) \in \mathcal{X}_h}{\inf}\underset{(\bb{0},0)\neq(\bb{w}_h,r_h) \in \mathcal{X}_h}{\sup} \dfrac{\widehat{A}_h((\bb{v}_h,q_h),(\bb{w}_h,r_h))}{\|(\bb{v}_h,q_h)\|\|(\bb{w}_h,r_h)\|} &= \widetilde{\gamma}^*.
\end{split}
\end{equation*}
\end{lemma}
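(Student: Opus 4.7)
The plan is to mimic, essentially verbatim, the three-step argument used in Lemma \ref{eq:infsupdiscreta}, since $\widehat{A}_h$ has the same block structure as $A_h$ (only the roles of the two slots are interchanged and a global sign of the pressure-coupling terms is flipped). The key observation is that testing $\widehat{A}_h((\bb{v}_h,q_h),\cdot)$ against $(\bb{v}_h,q_h)$ itself makes the two $b$-terms cancel identically, leaving only $a_h(\bb{v}_h,\bb{v}_h)$; hence the discrete coercivity \eqref{eq:elliptic_disc} together with the Archimedean property yields
\begin{equation*}
\widehat{A}_h((\bb{v}_h,q_h),(\bb{v}_h,q_h)) = a_h(\bb{v}_h,\bb{v}_h) \geq \widetilde{\alpha}\|\bb{v}_h\|_{1,\O}^2 \geq C\varepsilon\|(\bb{v}_h,q_h)\|^2
\end{equation*}
for some $\varepsilon\in (0,1]$. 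This is the analogue of Step~1.

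For Step~2, I would use the discrete inf-sup condition \eqref{eq:inf_sup_disc}: given $q_h\in \Q_h$, there exists $\widetilde{\bb{w}}_h\in\bV_h$ with $b(\widetilde{\bb{w}}_h,q_h)\geq \bar{\beta}\|\widetilde{\bb{w}}_h\|_{1,\O}\|q_h\|_{0,\O}$; then I renormalize by setting $\bb{w}_h:=(\|q_h\|_{0,\O}/\|\widetilde{\bb{w}}_h\|_{1,\O})\widetilde{\bb{w}}_h$, so that $\|\bb{w}_h\|_{1,\O}=\|q_h\|_{0,\O}$ and $b(\bb{w}_h,q_h)\geq \bar{\beta}\|q_h\|_{0,\O}^2$. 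Testing $\widehat{A}_h((\bb{v}_h,q_h),\cdot)$ against $(\bb{w}_h,0)$ gives $\widehat{A}_h((\bb{v}_h,q_h),(\bb{w}_h,0))=a_h(\bb{v}_h,\bb{w}_h)+b(\bb{w}_h,q_h)$ (the $-b(0,\bb{v}_h)$ piece vanishes), which has exactly the same form as the corresponding expression in the primal case. Using the continuity estimate $a_h(\bb{v}_h,\bb{w}_h)\leq C_{\bb{\beta},\nu}\|(\bb{v}_h,q_h)\|^2$ one gets a lower bound of the form $-C_{\bb{\beta},\nu}\|(\bb{v}_h,q_h)\|^2+\bar{\beta}\|q_h\|_{0,\O}^2$ after possibly swapping $\bb{w}_h$ with $-\bb{w}_h$ as in Step~3 of the primal proof.

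Step~3 then combines the two bounds using the convex-combination trick of \cite[Lemma~5]{BRAACK20111126}: choose the test pair $(\bb{w}_h,r_h)$ as a suitable linear combination of $(\bb{v}_h,q_h)$ (from Step~1) and $(\pm\bb{w}_h,0)$ (from Step~2) to obtain a single $(\bb{w}_h,r_h)\in\mathcal{X}_h$ with norm controlled by $\|(\bb{v}_h,q_h)\|$ and satisfying $\widehat{A}_h((\bb{v}_h,q_h),(\bb{w}_h,r_h))\geq \widetilde{\gamma}^*\|(\bb{v}_h,q_h)\|\|(\bb{w}_h,r_h)\|$. This yields the first inf-sup condition; the second one follows either by the same argument applied to the adjoint slot (testing $\widehat{A}_h(\cdot,(\bb{w}_h,r_h))$ against $(\bb{w}_h,r_h)$ for coercivity and then using the inf-sup of $b$ in the first slot), or automatically from the first one by the usual finite-dimensional equivalence of the two inf-sup conditions. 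The only mildly delicate point is the bookkeeping of signs in Step~2 caused by the $-b(p_h^*,\bb{v}_h)$ term in $\widehat{A}_h$, but this is resolved by the same $\bb{w}_h\mapsto -\bb{w}_h$ trick used in the primal proof, so no genuinely new difficulty arises.
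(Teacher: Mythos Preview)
Your proposal is correct and follows exactly the approach the paper itself indicates: the paper does not give a separate proof of Lemma~\ref{eq:infsupdiscretadual} but simply states that ``the proof is similar to those in Lemma~\ref{eq:infsupdiscreta}, so we skip the details,'' which is precisely the three-step argument you reproduce. Your observation that $\widehat{A}_h((\bb{v}_h,q_h),(\bb{w}_h,0))=a_h(\bb{v}_h,\bb{w}_h)+b(\bb{w}_h,q_h)$ has the same form as the primal expression (so that the sign-flip trick and the Braack--Lube combination lemma apply verbatim) is exactly the point, and no additional ingredient is needed.
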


Finally, we observe that the discrete eigenvalues associated to $\bT_h^*$ are the conjugates of the eigenvalues of $\bT_h$.


Now, due to the compactness of $\bT$, we are able to prove that $\bT_h$ converge to $\bT$ as $h\downarrow 0$  in norm. This is contained in the following result.
\begin{lemma}
\label{lmm:conv1}
Let $\boldsymbol{f}\in \H^1_0(\Omega,\mathbb{C})$ be such that $\widehat{\bu}:=\boldsymbol{T}\boldsymbol{f}$ and $\widehat{\bu}_h:=\bT_h\boldsymbol{f}$. Then, there exists a positive constant $\bb{C}$, independent of $h$, such that 
\begin{equation*}
\|(\bT-\bT_h)\boldsymbol{f}\|_{1,\Omega}\leq  \bb{C}h^{\sigma}\|\boldsymbol{f}\|_{1,\O}, \quad \sigma := \min\{1,s\}.
\end{equation*}
\end{lemma}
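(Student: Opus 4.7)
The plan is to prove this standard a priori source-problem error estimate by combining the discrete inf-sup stability of $A_h(\cdot,\cdot)$ (Lemma~\ref{eq:infsupdiscreta}) with a Strang-type consistency argument, then invoking the regularity result of Theorem~\ref{th:regularidadfuente} and the approximation estimates of Lemmas~\ref{lmm:bh}--\ref{estimate3}. Throughout, I denote $(\widehat{\bu},\widehat{p}) = \bT\boldsymbol{f}$-pair (in the sense of \eqref{def:oseen_system_weak_source}) and $(\widehat{\bu}_h,\widehat{p}_h)$ its discrete analogue from \eqref{eq:weak_oseen_source_disc}.

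First I would split the velocity error via the interpolant from Lemma~\ref{estimate2} and the pressure error via the $\L^2$-projection from Lemma~\ref{estimate3}, writing
\begin{equation*}
\widehat{\bu}-\widehat{\bu}_h = (\widehat{\bu}-\widehat{\bu}_I) + (\widehat{\bu}_I-\widehat{\bu}_h), \qquad \widehat{p}-\widehat{p}_h = (\widehat{p}-\mathcal{P}_h\widehat{p}) + (\mathcal{P}_h\widehat{p}-\widehat{p}_h).
\end{equation*}
The interpolation parts are controlled directly by Lemmas~\ref{estimate2}--\ref{estimate3} in $\mathcal{O}(h^\sigma)$, where $\sigma=\min\{1,s\}$ and $s$ comes from Theorem~\ref{th:regularidadfuente}. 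The real work lies in bounding the discrete components $(\widehat{\bu}_I-\widehat{\bu}_h,\mathcal{P}_h\widehat{p}-\widehat{p}_h)\in\mathcal{X}_h$.

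Next, by Lemma~\ref{eq:infsupdiscreta} I would pick a unit-norm test pair $(\bv_h,q_h)\in\mathcal{X}_h$ so that
\begin{equation*}
\widetilde{\gamma}\,\|(\widehat{\bu}_I-\widehat{\bu}_h,\mathcal{P}_h\widehat{p}-\widehat{p}_h)\|_\mathcal{X} \;\le\; A_h((\widehat{\bu}_I-\widehat{\bu}_h,\mathcal{P}_h\widehat{p}-\widehat{p}_h),(\bv_h,q_h)).
\end{equation*}
Using the discrete source equation \eqref{eq:weak_oseen_source_disc} for $(\widehat{\bu}_h,\widehat{p}_h)$ and the continuous one \eqref{def:oseen_system_weak_source} for $(\widehat{\bu},\widehat{p})$ (valid against the conforming $\bv_h,q_h$), the right-hand side decomposes, elementwise on $K\in\CT_h$, into the following consistency residuals after inserting a piecewise polynomial approximation $\bu_\pi$ of $\widehat{\bu}$ via Lemma~\ref{lmm:bh}: (i) $a_h^{\nabla,K}(\widehat{\bu}_I-\bu_\pi,\bv_h) - a^{\nabla,K}(\widehat{\bu}-\bu_\pi,\bv_h)$, handled by the $\alpha_*$-$\alpha^*$ stability of $a_h^{\nabla,K}$ and Cauchy--Schwarz; (ii) the convective residual $\widehat{a}_h^{\bb\beta,K}(\widehat{\bu}_I,\bv_h) - a^{\bb\beta,K}(\widehat{\bu},\bv_h)$, which I would treat by inserting $\bu_\pi$ and exploiting the boundedness of $\bb{\beta}\in\L^\infty(\O)^2$ together with the $\L^2$-approximation properties of $\Pi_0^{0,K}$ and $\bPi_0^K$; (iii) the pressure-coupling terms $b(\bv_h,\mathcal{P}_h\widehat{p}-\widehat{p})$ and $b(\widehat{\bu}-\widehat{\bu}_I,q_h)$, bounded by Lemmas~\ref{estimate2}--\ref{estimate3}; and (iv) the load-side consistency error $c_h(\boldsymbol{f},\bv_h)-c(\boldsymbol{f},\bv_h)$, controlled by the $\L^2$-approximation property of $\bPi_0^K$ applied to $\boldsymbol{f}$ and $\bv_h$.

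Each residual is bounded by $h^\sigma$ times $\|\widehat{\bu}\|_{1+s,\O}+\|\widehat{p}\|_{s,\O}+\|\boldsymbol{f}\|_{1,\O}$, which, by the regularity estimate of Theorem~\ref{th:regularidadfuente}, collapses to $h^\sigma\|\boldsymbol{f}\|_{1,\O}$. Combining this with the triangle inequality and the interpolation bounds yields the stated estimate. I expect the trickiest step to be the convective consistency in~(ii): the discrete form $\widehat{a}_h^{\bb\beta}$ replaces $\nabla\bw_h$ by $\Pi_0^{0,K}\nabla\bw_h$ and $\bv_h$ by $\bPi_0^K\bv_h$, so one must carefully add and subtract a polynomial $\bu_\pi$ (so that both projectors act as the identity on it) and then use that the remainders $\nabla(\widehat{\bu}_I-\bu_\pi)$ and $\bv_h-\bPi_0^K\bv_h$ are $\mathcal{O}(h^\sigma)$ and $\mathcal{O}(h)$ respectively in $\L^2$, which is where the skew-symmetric definition \eqref{eq:convecdisc} is essential to avoid spurious $\nu^{-1}$ factors.
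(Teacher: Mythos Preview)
Your proposal is correct and follows essentially the same route as the paper's proof: the triangle-inequality splitting via $\widehat{\bu}_I$ and $\mathcal{P}_h\widehat{p}$, the use of the discrete inf-sup stability of $A_h$ (Lemma~\ref{eq:infsupdiscreta}) to reduce to a consistency residual, and the elementwise decomposition into the gradient, convective, pressure-coupling, and load-side terms are exactly what the paper does.

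The only point where your sketch differs slightly is in the treatment of the convective consistency~(ii). You propose to insert the polynomial $\bu_\pi$ and exploit that both $\Pi_0^{0,\E}$ and $\bPi_0^\E$ act as the identity on polynomials; this works, but note that after inserting $\bu_\pi$ in the first slot you are still left with the mismatch $((\nabla\bu_\pi)\boldsymbol{\beta},\bPi_0^\E\bv_h-\bv_h)_{0,\E}$, which you must bound by the $\mathcal{O}(h)$ projection error on $\bv_h$. The paper instead first splits at $\widehat{\bu}_I$, writing the residual as $\bigl[\widehat{a}_h^{\boldsymbol{\beta},\E}(\widehat{\bu}_I,\bv_h)-\widehat{a}^{\boldsymbol{\beta},\E}(\widehat{\bu}_I,\bv_h)\bigr]+\widehat{a}^{\boldsymbol{\beta},\E}(\widehat{\bu}_I-\widehat{\bu},\bv_h)$, and then handles the first bracket via the identity $\Pi_0^{0,\E}\nabla\bv=\nabla\bPi^{\nabla,\E}\bv$ together with the tensor rewriting $((\nabla\bw)\boldsymbol{\beta},\bv)_{0,\E}=(\nabla\bw,(\boldsymbol{\beta}\otimes\bv)^t)_{0,\E}$, which lets orthogonality of the projectors absorb one factor. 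Both decompositions yield the same $\mathcal{O}(h^\sigma)$ bound; the paper's version is a bit more explicit about where each $h$-power comes from, while yours is slightly more streamlined. Either way, the skew-symmetric halves must both be estimated (the paper does this by ``changing the roles'' of the two arguments), which your final remark on \eqref{eq:convecdisc} correctly anticipates.
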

\begin{proof}
Let $\boldsymbol{f}\in\H_0^1(\O)$ be such that $\widehat{\bu}:=\bT\boldsymbol{f}$ and $\widehat{\bu}_h:=\bT_h\boldsymbol{f}$, where the pairs $(\widehat{\bu},\widehat{p})\in\mathcal{X}$  and  $(\widehat{\bu}_h,\widehat{p}_h)\in\mathcal{X}_h$ are the solutions of \eqref{def:oseen_system_weak_source} and \eqref{eq:weak_oseen_source_disc}, respectively. Then, we have
\begin{multline}
\label{eq:danilo}
\|(\bT-\bT_h)\boldsymbol{f}\|_{1,\O} = \|\widehat{\bu}-\widehat{\bu}_h\|_{1,\O} \leq \|\widehat{\bu}-\widehat{\bu}_h\|_{1,\O} + \|\widehat{p}-\widehat{p}_h\|_{0,\O} \\
\leq \|\widehat{\bu}-\widehat{\bu}_I\|_{1,\O} + \|\widehat{p}-\mathcal{P}_h(\widehat{p})\|_{0,\O} + \|\widehat{\bu}_I-\widehat{\bu}_h\|_{1,\O} + \|\widehat{p}_h-\mathcal{P}_{h}(\widehat{p})\|_{0,\O}.
\end{multline}
Observe that invoking Lemmas \ref{estimate2} and \ref{estimate3} we obtain
\begin{equation*}
\|\widehat{\bu} - \widehat{\bu}_I\|_{1,\O} \lesssim h^{s}|\widehat{\bu}|_{1+s,\O} \quad \text{and} \quad \|\widehat{p} - \mathcal{P}_h(\widehat{p})\|_{0,\O} \lesssim h^{s}\|\widehat{p}\|_{s,\O}.
\end{equation*}
Now, we need to control the last two terms on the right-hand side of \eqref{eq:danilo}. First we note that, since $A_h(\cdot,\cdot)$ is stable we have
\begin{equation}
\label{eq:danilo2}
\|\widehat{\bu}_I-\widehat{\bu}_h\|_{1,\O} + \|\widehat{p}_h-\mathcal{P}_{h}(\widehat{p})\|_{0,\O} \leq A_{h}((\widehat{\bu}_I-\widehat{\bu}_h,\widehat{p}_h-\mathcal{P}_{h}(\widehat{p})),(\bv_h,q_h)),
\end{equation}
with $(\bv_h,q_h) \in \mathcal{X}_h$ being chosen in such a way that $\|\bv_h\|_{1,\O} + \|q_h\|_{0,\O} \leq 1$. On the other hand, manipulating the right-hand side of \eqref{eq:danilo2} we obtain 
\begin{multline*}
A_{h}((\widehat{\bu}_I-\widehat{\bu}_h,\widehat{p}_h-\mathcal{P}_{h}(\widehat{p})),(\bv_h,q_h)) = A_{h}((\widehat{\bu}_I,\mathcal{P}_{h}(\widehat{p})),(\bv_h,q_h)) - c_h(\boldsymbol{f},\bv_h) \\
= \displaystyle \sum_{\E\in\CT_h} \left [a_h^{\nabla,\E}(\widehat{\bu}_I,\bv_h) + \widehat{a}_h^{\boldsymbol{\beta},\E}(\widehat{\bu}_I,\bv_h) + b^\E(\bv,\mathcal{P}_h(\widehat{p})) + b^\E(\widehat{\bu}_I,q_h)\right] - c_h(\boldsymbol{f},\bv_h) \\
= \displaystyle \sum_{\E\in\CT_h} \left [a_h^{\nabla,\E}(\widehat{\bu}_I - \widehat{\bu}_\pi,\bv_h) + a^{\nabla,\E}(\widehat{\bu}_\pi - \widehat{\bu},\bv_h) + a^{\nabla,\E}(\widehat{\bu},\bv_h) + \widehat{a}_h^{\boldsymbol{\beta},\E}(\widehat{\bu}_I,\bv_h)\right. \\
\displaystyle \left. + b^\E(\bv,\mathcal{P}_h(\widehat{p})) + b^\E(\widehat{\bu}_I,q_h)\right] - c_h(\boldsymbol{f},\bv_h) \\
= \displaystyle \sum_{\E\in\CT_h} \left [a_h^{\nabla,\E}(\widehat{\bu}_I - \widehat{\bu}_\pi,\bv_h) + a^{\nabla,\E}(\widehat{\bu}_\pi - \widehat{\bu},\bv_h) + \widehat{a}_h^{\boldsymbol{\beta},\E}(\widehat{\bu}_I,\bv_h) - \widehat{a}^{\boldsymbol{\beta},\E}(\widehat{\bu},\bv_h)\right. \\
\displaystyle \left. + b^\E(\bv_h,\mathcal{P}_h(\widehat{p}) - \widehat{p}) + b^\E(\widehat{\bu}_I-\widehat{\bu},q_h)\right] - c(\boldsymbol{f},\bv_h - \boldsymbol{\Pi}_{0}\bv_h).
\end{multline*}
Next, using triangle inequality and invoking Lemmas \ref{lmm:bh} and \ref{estimate2} we obtain 
\begin{multline}\label{eq1}
a_h^{\nabla,\E}(\widehat{\bu}_I - \widehat{\bu}_\pi,\bv_h) \leq \nu|\widehat{\bu}_I - \widehat{\bu}_\pi|_{1,\E}|\bv_h|_{1,\E} \\
\lesssim \nu\|\bv_h\|_{1,\E}\left(|\widehat{\bu} - \widehat{\bu}_I|_{1,\E} + |\widehat{\bu} - \widehat{\bu}_\pi|_{1,\E}\right) \lesssim \nu h_\E^{s}\|\widehat{\bu}\|_{1+s,\E}\|\bv_h\|_{1,\E}.
\end{multline}
Invoking again Lemma \ref{lmm:bh} we obtain
\begin{equation}\label{eq2}
a^{\nabla,\E}(\widehat{\bu}_\pi - \widehat{\bu},\bv_h) \leq \nu|\widehat{\bu}-\widehat{\bu}_\pi|_{1,\E}|\bv_h|_{1,\E} \lesssim \nu h_\E^{s}\|\widehat{\bu}\|_{1+s,\E}\|\bv_h\|_{1,\E}.
\end{equation}
Now, invoking Lemma \ref{estimate3} and \ref{estimate2} we obtain 
\begin{equation}\label{eq3}
\begin{split}
b^\E(\bv_h,\mathcal{P}_h(\widehat{p}) - \widehat{p}) &\leq |\bv_h|_{1,\E}\|\widehat{p}-\mathcal{P}_h(\widehat{p})\|_{0,\E} \leq h_\E^{s}\|\bv_h\|_{1,\E}\|\widehat{p}\|_{s,\E},\\
b^\E(\widehat{\bu}_I-\widehat{\bu},q_h) &\leq |\widehat{\bu}-\widehat{\bu}_I|_{1,\E}\|q_h\|_{0,\E} \leq h_\E^s\|\widehat{\bu}\|_{1+s,\E}\|q_h\|_{0,\E}.
\end{split}
\end{equation}
On the other hand, using approximation properties of $\boldsymbol{\Pi}_{0}^\E$, we have
\begin{equation}\label{eq4}
c(\boldsymbol{f},\bv_h-\boldsymbol{\Pi}_{0}\bv_h) \leq \|\bb{f}\|_{0,\O}\|\bv_h - \boldsymbol{\Pi}_{0}\bv_h\|_{0,\O} \leq h\|\bb{f}\|_{1,\O}\|\bv_h\|_{1,\O}.
\end{equation}
Finally, we need to control the convective terms. To do this task, we have 
\begin{multline*}
\widehat{a}_h^{\boldsymbol{\beta},\E}(\widehat{\bu}_I,\bv_h) - \widehat{a}^{\boldsymbol{\beta},\E}(\widehat{\bu},\bv_h) = \underbrace{\widehat{a}_h^{\boldsymbol{\beta},\E}(\widehat{\bu}_I,\bv_h) - \widehat{a}^{\boldsymbol{\beta},\E}(\widehat{\bu}_I,\bv_h)}_{\textrm{(I)}} + \underbrace{\widehat{a}^{\boldsymbol{\beta},\E}(\widehat{\bu}_I-\widehat{\bu},\bv_h)}_{\textrm{(II)}}.
\end{multline*}
Observe that $\textrm{(II)}$ is easily bounded by using Lemma \ref{estimate2}, obtaining
\begin{multline*}
\textrm{(II)} \leq \dfrac{\|\bb{\beta}\|_{\infty,\E}}{2}|\widehat{\bu}-\widehat{\bu}_I|_{1,\E}\|\bv_h\|_{0,\E} + \dfrac{\|\bb{\beta}\|_{\infty,\E}}{2}|\bv_h|_{1,\E}\|\widehat{\bu}-\widehat{\bu}_I\|_{0,\E} \\ 
\lesssim \dfrac{\|\bb{\beta}\|_{\infty,\E}}{2} h_\E^s\|\widehat{\bu}\|_{1+s,\E}\|\bv_h\|_{1,\E} + \dfrac{\|\bb{\beta}\|_{\infty,\E}}{2} h_\E^{1+s}\|\widehat{\bu}\|_{1+s,\E}\|\bv_h\|_{1,\E} \\
\lesssim \|\bb{\beta}\|_{\infty,\E}h_\E^{\sigma}\|\widehat{\bu}\|_{1+s,\E}\|\bv_h\|_{1,\E}
\end{multline*}

\noindent To estimate $\textrm{(I)}$, we observe that the following identity can be obtained
\begin{equation*}
\left((\bb{\beta}\cdot\nabla)\bb{u},\bb{v}\right)_{0,\O} = \left((\nabla\bb{u})\bb{\beta},\bb{v}\right)_{0,\O} = \left(\nabla \bb{u},(\bb{\beta}\otimes\bb{v})^{t}\right)_{0,\O},
\end{equation*}
where $t$ denotes the traspose operator. On the other hand, observe that by the definition of the projectors $\Pi_{0}^{0,\E}$ and $\bb{\Pi}^{\nabla,\E}$, for every $c \in \mathbb{P}_{0}(\E)^{2\times2}$ we have
\begin{equation*}
(\Pi_0^{0,\E}\nabla \bb{v},c)_{0,\E} = (\nabla \bb{v}, c)_{0,\E} = (\nabla \bb{\Pi}^{\nabla,\E}\bb{v},c)_{0,\E},
\end{equation*}
obtaining that $\Pi_0^{0,\E}\nabla \bb{v} = \nabla \bb{\Pi}^{\nabla,\E}\bb{v}$. With the previous identities at hand, we have
\begin{multline*}
a_h^{\boldsymbol{\beta},\E}(\widehat{\bu}_I,\bv_h)-a^{\boldsymbol{\beta},\E}(\widehat{\bu}_I,\bv_h)=
\left((\nabla\boldsymbol{\Pi}^{\nabla,\E}\widehat{\bu}_I)\bb{\beta},\bb{\Pi}_0^\E\bv_h\right)_{0,\E} - \left((\nabla \widehat{\bu}_I)\bb{\beta},\bv_h)\right)_{0,\E}\\
= \left((\nabla\boldsymbol{\Pi}^{\nabla,\E} \widehat{\bu}_I - \nabla \widehat{\bu}_I)\bb{\beta}, \boldsymbol{\Pi}_0^{\E}\bv_h\right)_{0,\E} + \left((\nabla \widehat{\bu}_I)\bb{\beta}, \boldsymbol{\Pi}_0^{\E}\bv_h - \bv_h\right)_{0,\E}\\
= \left(\nabla\boldsymbol{\Pi}^{\nabla,\E} \widehat{\bu}_I - \nabla \widehat{\bu}_I, (\bb{\beta}\otimes\boldsymbol{\Pi}_0^{\E}\bv_h)^{t}\right)_{0,\E} + \left((\nabla \widehat{\bu}_I)\bb{\beta}, \boldsymbol{\Pi}_0^{\E}\bv_h - \bv_h\right)_{0,\E}\\
= \left(\nabla\boldsymbol{\Pi}^{\nabla,\E} \widehat{\bu}_I - \nabla \widehat{\bu}_I, (\bb{\beta}\otimes(\boldsymbol{\Pi}_0^{\E}\bv_h - \bv_h))^{t}\right)_{0,\E} \\
+ \left((\nabla \widehat{\bu}_I)\bb{\beta} - \boldsymbol{\Pi}_0^{\E}((\nabla \widehat{\bu}_I)\bb{\beta}), \boldsymbol{\Pi}_0^{\E}\bv_h - \bv_h\right)_{0,\E} \\ 
+ \left(\nabla\boldsymbol{\Pi}^{\nabla,\E} \widehat{\bu}_I- \nabla \widehat{\bu}_I, (\bb{\beta}\otimes\bv_h)^{t} - \Pi_0^{0,\E}(\bb{\beta}\otimes\bv_h)^{t}\right)_{0,\E}.
\end{multline*}
Hence, using the Cauchy-Schwarz inequality we obtain
\begin{multline*}
a_h^{\boldsymbol{\beta},\E}(\widehat{\bu}_I,\bv_h) - a^{\boldsymbol{\beta},\E}(\widehat{\bu}_I,\bv_h) \leq \|(\nabla \widehat{\bu}_I)\boldsymbol{\beta} - \boldsymbol{\Pi}_0^{\E}((\nabla \widehat{\bu}_I)\boldsymbol{\beta})\|_{0,\E}\|\bv_h - \boldsymbol{\Pi}_0^{\E}\bv_h\|_{0,\E} \\ 
+ |\widehat{\bu}_I - \boldsymbol{\Pi}^{\nabla,\E} \widehat{\bu}_I|_{1,\E}\left(\|\boldsymbol{\beta}\otimes(\boldsymbol{\Pi}_0^{\E}\bv_h - \bv_h)\|_{0,\E} + \|\boldsymbol{\beta}\otimes\bv_h - \boldsymbol{\Pi}_0^{\E}(\boldsymbol{\beta}\otimes\bv_h)^{t}\|_{0,\E}\right) \\
\lesssim h_\E\|\boldsymbol{\beta}\|_{\infty,\E}|\widehat{\bu}_I|_{1,\E}\|\bv_h\|_{1,\E} \lesssim h_\E\|\boldsymbol{\beta}\|_{\infty,\E}\|\widehat{\bu}\|_{1,\E}\|\bv_h\|_{1,\E},
\end{multline*}
where in the last inequality we have used the stability of $\boldsymbol{\Pi}^{\nabla,\E}$, Cauchy-Schwarz inequality and error estimates for $\boldsymbol{\Pi}_0^\E$, together with the assumption that $(\bb{\beta}\otimes\bv_h)^{t}$ has suffient regularity. Changing the roles of $\widehat{\bb{u}}_I$ and $\bb{v}_h$ we conclude that 
\begin{equation}\label{eq5}
\widehat{a}_h^{\boldsymbol{\beta},\E}(\widehat{\bu}_I,\bv_h) - \widehat{a}^{\boldsymbol{\beta},\E}(\widehat{\bu}_I,\bv_h) \lesssim h_\E^\sigma\|\boldsymbol{\beta}\|_{\infty,\E}\|\widehat{\bu}\|_{1,\E}\|\bv_h\|_{1,\E},
\end{equation}
where $\sigma := \min\{1,s\}$. Therefore, summing over all polygons, using the estimates \eqref{eq:estimatefuente_velocity}, \eqref{eq:estimatefuente_pressure}, invoking Theorem \ref{th:regularidadfuente} and gathering \eqref{eq1}, \eqref{eq2}, \eqref{eq3}, \eqref{eq4}, \eqref{eq5} we obtain 
\begin{equation*}
\|(\bT-\bT_h)\boldsymbol{f}\|_{1,\O} \leq \bb{C}h^{\sigma}\|\boldsymbol{f}\|_{1,\O},
\end{equation*}
where if $C>0$ is the constant defined in Theorem \ref{th:regularidadfuente}, then 
\begin{equation}
\label{eq:constant1}
\bb{C} := \max\left\lbrace 2C, 2C\nu, C\|\bb{\beta}\|_{\infty,\O}, \dfrac{2C_{pf}}{\nu}\|\bb{\beta}\|_{\infty,\O}, 1\right\rbrace.
\end{equation}
This concludes the proof.
\end{proof}
Also for the adjoint problem, we have the same convergence result. Since the proof is essentially identical to Lemma \ref{lmm:conv1} we skip the steps of the proof.
\begin{lemma}
\label{eq:adjoint_diff}
There exists a constant $\bb{C}>0$, independent of $h$, such that
\begin{equation*}
\|(\bT^*-\bT_h^*)\boldsymbol{f}\|_{1,\O}\leq \bb{C}h^{\sigma^*}\|\boldsymbol{f}\|_{1,\O}, \quad \sigma^* := \min\{1,s^{*}\},
\end{equation*}
where $\bb{C}>0$ is defined in Lemma \ref{lmm:conv1}.
\end{lemma}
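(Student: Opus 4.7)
The plan is to mirror the argument of Lemma \ref{lmm:conv1} step by step, working with the dual source problems \eqref{def:oseen_system_weak_dual_source} and \eqref{def:oseen_system_weak_disc_source_dual} in place of the primal ones, and with the dual sesquilinear form $\widehat{A}_h(\cdot,\cdot)$ in place of $A_h(\cdot,\cdot)$. The key structural ingredients that make this transplantation possible are already in hand: the dual inf-sup stability at the discrete level (Lemma \ref{eq:infsupdiscretadual}), the additional regularity with the dual index $s^*$ (Theorem \ref{th:regularidadfuente_dual}), and the fact that the interpolation and projection tools (Lemmas \ref{lmm:bh}, \ref{estimate2}, \ref{estimate3}) are insensitive to the direction in which the bilinear forms are tested.

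First, letting $(\widehat{\bu}^*,\widehat{p}^*)\in\mathcal{X}$ and $(\widehat{\bu}_h^*,\widehat{p}_h^*)\in\mathcal{X}_h$ denote the solutions of the continuous and discrete dual source problems respectively, I would introduce the virtual interpolant $\widehat{\bu}_I^*\in\bV_h$ and the pressure projection $\mathcal{P}_h(\widehat{p}^*)\in\Q_h$, and split
\begin{equation*}
\|(\bT^*-\bT_h^*)\boldsymbol{f}\|_{1,\O} \leq \|\widehat{\bu}^*-\widehat{\bu}_I^*\|_{1,\O} + \|\widehat{p}^*-\mathcal{P}_h(\widehat{p}^*)\|_{0,\O} + \|\widehat{\bu}_I^*-\widehat{\bu}_h^*\|_{1,\O} + \|\widehat{p}_h^*-\mathcal{P}_{h}(\widehat{p}^*)\|_{0,\O}.
\end{equation*}
The first two terms give $\mathcal{O}(h^{s^*})$ directly from Lemmas \ref{estimate2}, \ref{estimate3} and Theorem \ref{th:regularidadfuente_dual}.

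For the remaining discrete error I would use the dual discrete inf-sup condition from Lemma \ref{eq:infsupdiscretadual} to select a test pair $(\bv_h,q_h)\in\mathcal{X}_h$ with $\|\bv_h\|_{1,\O}+\|q_h\|_{0,\O}\leq 1$ such that
\begin{equation*}
\|\widehat{\bu}_I^*-\widehat{\bu}_h^*\|_{1,\O} + \|\widehat{p}_h^*-\mathcal{P}_{h}(\widehat{p}^*)\|_{0,\O} \lesssim \widehat{A}_{h}\bigl((\bv_h,q_h),(\widehat{\bu}_I^*-\widehat{\bu}_h^*,\widehat{p}_h^*-\mathcal{P}_{h}(\widehat{p}^*))\bigr).
\end{equation*}
Using the discrete dual equation \eqref{def:oseen_system_weak_disc_source_dual} to eliminate $(\widehat{\bu}_h^*,\widehat{p}_h^*)$ and then adding and subtracting the polynomial approximation $\widehat{\bu}_\pi^*$ (from Lemma \ref{lmm:bh}) inside $a_h^{\nabla,\E}$ to invoke its consistency on $\mathbb{P}_1$, the right-hand side reduces to six consistency residuals that are structurally identical to \eqref{eq1}--\eqref{eq5}: a diffusion interpolation/consistency term, a convective consistency term, pressure-divergence pairings, and a load-projection term coming from $c_h(\boldsymbol{f},\bv_h)$ versus $c(\boldsymbol{f},\bv_h)$. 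Each is bounded using the dual regularity $\widehat{\bu}^*\in\H^{1+s^*}(\O,\mathbb{C})^2$, $\widehat{p}^*\in\H^{s^*}(\O,\mathbb{C})$ and Theorem \ref{th:regularidadfuente_dual}, producing an $h^{\sigma^*}$ factor with $\sigma^*:=\min\{1,s^*\}$.

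The only step deserving real attention, as in Lemma \ref{lmm:conv1}, is the treatment of the convective consistency error
$\widehat{a}_h^{\boldsymbol{\beta},\E}(\bv_h,\widehat{\bu}_I^*)-\widehat{a}^{\boldsymbol{\beta},\E}(\bv_h,\widehat{\bu}_I^*)+\widehat{a}^{\boldsymbol{\beta},\E}(\bv_h,\widehat{\bu}_I^*-\widehat{\bu}^*)$. Here the variable against which the gradient acts is the second argument, so I would exploit the skew-symmetric definition \eqref{eq:convecdisc} of $\widehat{a}_h^{\boldsymbol{\beta},\E}$ to rewrite the expression as a combination of terms of the form $\widehat{a}_h^{\boldsymbol{\beta},\E}(\widehat{\bu}_I^*,\bv_h)-\widehat{a}^{\boldsymbol{\beta},\E}(\widehat{\bu}_I^*,\bv_h)$, bringing us back exactly to the situation analysed in \eqref{eq5}. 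The same identities $(\boldsymbol{\beta}\cdot\nabla)\boldsymbol{u}=(\nabla\boldsymbol{u})\boldsymbol{\beta}=(\nabla\boldsymbol{u},(\boldsymbol{\beta}\otimes\cdot)^{t})$ and the commutation $\Pi_0^{0,\E}\nabla\bv_h=\nabla\boldsymbol{\Pi}^{\nabla,\E}\bv_h$ then yield the bound $\|\boldsymbol{\beta}\|_{\infty,\E}h_\E^{\sigma^*}\|\widehat{\bu}^*\|_{1+s^*,\E}\|\bv_h\|_{1,\E}$. Summing over $\E\in\CT_h$, applying Cauchy-Schwarz, and absorbing the various constants into a single $\bb{C}>0$ (identical in structure to \eqref{eq:constant1}, modulo replacing $C$ by $C^*$) gives the stated estimate. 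The hard part is really bookkeeping the order of arguments in the skew-symmetric convective form; everything else is a direct transcription of the primal argument.
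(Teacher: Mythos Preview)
Your proposal is correct and matches the paper's approach: the paper does not give a separate proof but simply states that the argument is ``essentially identical to Lemma \ref{lmm:conv1}'' and skips the details. Your careful handling of the argument-swap in the skew-symmetric convective term and your observation that the constant really involves $C^*$ rather than $C$ (which the paper glosses over via its generic-constant remark) are both apt.
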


Now we are in position to apply the theory of  \cite{MR0203473}  to conclude that  our numerical method does not introduce spurious eigenvalues. This is stated in the following theorem.
\begin{theorem}
	\label{thm:spurious_free}
	Let $V\subset\mathbb{C}$ be an open set containing $\sp(\bT)$. Then, there exists $h_0>0$ such that $\sp(\bT_h)\subset V$ for all $h<h_0$.
\end{theorem}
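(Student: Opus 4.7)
The plan is to exploit the norm convergence $\|\bT-\bT_h\|\to 0$ proved in Lemma \ref{lmm:conv1} together with a resolvent perturbation argument, which is the classical route in \cite{MR0203473}. Since $\bT$ is compact, its spectrum $\sp(\bT)=\{0\}\cup\{\kappa_k\}$ is compact (the sequence converges to $0$), so any open neighborhood $V$ of $\sp(\bT)$ has a closed complement $V^c\subset\rho(\bT)$ disjoint from the spectrum.

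First, I would secure uniform boundedness of the discrete operators. The discrete inf-sup conditions in Lemma \ref{eq:infsupdiscreta}, together with the continuity of $a_h$ and $c_h$, yield $\|\bT_h\|_{1,\O\to1,\O}\leq M_0$ for some $M_0$ independent of $h$. Fix $M>\max\{\|\bT\|, M_0\}$; then for every $|z|>M$ the Neumann series gives invertibility of both $z\bI-\bT$ and $z\bI-\bT_h$ with a uniform bound, so $\sp(\bT_h)\subset\{|z|\leq M\}$ for all $h$.

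Next, consider the compact set $K := V^c \cap \{|z|\leq M\}$. On $K$ the resolvent map $z\mapsto (z\bI-\bT)^{-1}$ is continuous (since $K\subset\rho(\bT)$) and hence uniformly bounded by some constant $C_V>0$. Using the identity
\begin{equation*}
z\bI-\bT_h \;=\; (z\bI-\bT)\bigl[\bI - (z\bI-\bT)^{-1}(\bT-\bT_h)\bigr],
\end{equation*}
Lemma \ref{lmm:conv1} gives $\|(z\bI-\bT)^{-1}(\bT-\bT_h)\|\leq C_V \bb{C} h^{\sigma}<1$ uniformly in $z\in K$ provided $h<h_0$ with $h_0$ small enough. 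A Neumann series argument then shows $z\bI-\bT_h$ is boundedly invertible for every $z\in K$, so $K\subset\rho(\bT_h)$. Combined with the estimate for $|z|>M$ this yields $\sp(\bT_h)\subset V$ for all $h<h_0$, which is the claim.

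The main obstacle I anticipate is the uniform control of the resolvent on $K$. The pointwise existence of $(z\bI-\bT)^{-1}$ on $V^c$ is immediate, but passing to a uniform bound (which is essential in order to pick a single $h_0$ that works for the whole compact set) relies on combining compactness of $K$ with continuity of the resolvent and on the uniform operator-norm convergence in Lemma \ref{lmm:conv1}; this in turn uses the mesh-independent inf-sup constants $\tilde\gamma$ and the coercivity constant $\tilde\alpha$. Once this uniformity is in place, the Neumann perturbation step is routine.
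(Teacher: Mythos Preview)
Your argument is correct and follows precisely the route the paper takes: the paper does not give a detailed proof but simply invokes the theory of \cite{MR0203473} after having established the norm convergence in Lemma~\ref{lmm:conv1}, and what you have written is exactly the standard resolvent--perturbation argument behind that citation. One minor simplification: you do not need the discrete inf--sup to obtain the uniform bound on $\|\bT_h\|$, since the operator--norm estimate of Lemma~\ref{lmm:conv1} already gives $\|\bT_h\|\leq\|\bT\|+\bb{C}h^{\sigma}$ directly.
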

\subsection{Error estimates}
\label{sec:conv}
To do the task of deriving error estimates, we begin by  recalling some definitions. Let $\kappa$ be a nonzero isolated eigenvalue of $\bT$ with algebraic multiplicity $m$ and let $\Gamma$
be a disk of the complex plane centered in $\kappa$, such that $\kappa$ is the only eigenvalue of $\bT$ lying in $\Gamma$ and $\partial\Gamma\cap\sp(\bT)=\emptyset$. We define the spectral projections of $\boldsymbol{E}$ and $\boldsymbol{E}^*$, associated to $\bT$ and $\bT^*$, respectively, as follows:
\begin{enumerate}
\item The spectral projector of $\bT$ associated to $\kappa$ is $\displaystyle \boldsymbol{E}:=\frac{1}{2\pi i}\int_{\partial\Gamma} (z\boldsymbol{I}-\bT)^{-1}\,dz;$
\item The spectral projector of $\bT^*$ associated to $\overline{\kappa}$ is $\displaystyle \boldsymbol{E}^*:=\frac{1}{2\pi i}\int_{\partial\Gamma} (z\boldsymbol{I}-\bT^*)^{-1}\,dz,$
\end{enumerate}
where $\boldsymbol{I}$ represents the identity operator. Let us remark that $\boldsymbol{E}$ and $\boldsymbol{E}^*$ are the projections onto the generalized eigenvector $R(\boldsymbol{E})$ and $R(\boldsymbol{E}^*)$, respectively. 

A consequence of Lemma \ref{lmm:conv1} is that there exist $m$ eigenvalues that  lie in $\Gamma$, namely $\kappa_h^{(1)},\ldots,\kappa_h^{(m)}$, repeated according their respective multiplicities, that converge to $\kappa$ as $h$ goes to zero. With this result at hand, we introduce the following spectral projection
\begin{equation*}
\boldsymbol{E}_h:=\frac{1}{2\pi i}\int_{\partial\Gamma} (z\boldsymbol{I}-\bT_h)^{-1}\,dz,
\end{equation*}
which is a projection onto the discrete invariant subspace $R(\boldsymbol{E}_h)$ of $\bT$, spanned by the generalized eigenvector of $\bT_h$ corresponding to 
 $\kappa_h^{(1)},\ldots,\kappa_h^{(m)}$.
Now we recall the definition of the \textit{gap} $\hdel$ between two closed
subspaces $\CM$ and $\CN$ of $\L^2(\O)$:
$$
\hdel(\CM,\CN)
:=\max\big\{\delta(\CM,\CN),\delta(\CN,\CM)\big\}, \text{ where } \delta(\CM,\CN)
:=\sup_{\underset{\left\|x\right\|_{0,\O}=1}{x\in\CM}}
\left(\inf_{y\in\CN}\left\|x-y\right\|_{0,\O}\right).
$$
We end this section proving error estimates for the eigenfunctions and eigenvalues.
\begin{theorem}
\label{thm:errors1}
The following estimates hold
\begin{equation*}
\hdel(R(\boldsymbol{E}),R(\boldsymbol{E}_h))\leq \bb{C}h^{\sigma},\quad\hdel(R(\boldsymbol{E}^*),R(\boldsymbol{E}_h^*))\leq \bb{C}h^{\sigma^*}\quad\text{and}\quad
|\kappa-\widehat{\kappa}_h|\leq \widetilde{\bb{C}}h^{\sigma+\sigma^*},
\end{equation*}
where $\displaystyle \widehat{\kappa}_{h} := \frac{1}{m}\sum_{i=1}^{m} \kappa_{h}^{(i)}$, $\widetilde{\bb{C}}>0$ is independent of $h$ and $\bb{C}>0$ is defined in Lemma \ref{lmm:conv1}.
\end{theorem}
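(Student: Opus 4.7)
The plan is to invoke the abstract Babu\v{s}ka--Osborn spectral approximation framework of \cite{MR1115235,MR0203473} directly, using the norm convergence of $\boldsymbol{T}_h$ and $\boldsymbol{T}_h^*$ already established in Lemmas~\ref{lmm:conv1} and \ref{eq:adjoint_diff}. Since $\boldsymbol{T}$ is compact and $\boldsymbol{T}_h\to\boldsymbol{T}$ in operator norm on $\H_0^1(\O,\mathbb{C})^2$, the classical theory yields the gap bound
\[
\hdel(R(\boldsymbol{E}),R(\boldsymbol{E}_h))\;\lesssim\;\bigl\|(\boldsymbol{T}-\boldsymbol{T}_h)|_{R(\boldsymbol{E})}\bigr\|,
\]
and symmetrically for the adjoint pair $(\boldsymbol{T}^*,\boldsymbol{T}_h^*)$.

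First I would dispose of the two gap estimates. Any $\bu\in R(\boldsymbol{E})$ is a finite linear combination of generalized eigenfunctions of $\boldsymbol{T}$ associated to $\kappa$; because $R(\boldsymbol{E})$ is finite dimensional and every such $\bu$ lies in the range of a positive power of $\boldsymbol{T}$, the regularity shift of Theorem~\ref{th:regularidadfuente} gives $\|\bu\|_{1+s,\O}\lesssim\|\bu\|_{1,\O}$. Applying Lemma~\ref{lmm:conv1} basis-wise on $R(\boldsymbol{E})$ then furnishes $\|(\boldsymbol{T}-\boldsymbol{T}_h)|_{R(\boldsymbol{E})}\|\lesssim h^{\sigma}$, which is the first estimate. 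The identical argument, using Theorem~\ref{th:regularidadfuente_dual} and Lemma~\ref{eq:adjoint_diff}, delivers the dual gap with exponent~$\sigma^{*}$.

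For the eigenvalue estimate I would invoke the Osborn identity for the mean of the discrete eigenvalues (see \cite[Theorem~7.3]{MR1115235}), which in our notation reads
\[
\widehat{\kappa}_h-\kappa \;=\;\frac{1}{m}\sum_{i=1}^{m}\bigl\langle(\boldsymbol{T}-\boldsymbol{T}_h)\bphi_i,\bphi_i^{*}\bigr\rangle \;+\; R_h,
\]
where $\{\bphi_i\}\subset R(\boldsymbol{E})$ and $\{\bphi_i^{*}\}\subset R(\boldsymbol{E}^{*})$ are bases biorthogonal with respect to the $c(\cdot,\cdot)$ pairing and $|R_h|\lesssim\|(\boldsymbol{T}-\boldsymbol{T}_h)|_{R(\boldsymbol{E})}\|\cdot\|(\boldsymbol{T}^{*}-\boldsymbol{T}_h^{*})|_{R(\boldsymbol{E}^{*})}\|$. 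To double the order, I would rewrite each scalar product via saddle-point Galerkin orthogonality: subtracting the discrete dual solution $\boldsymbol{T}_h^{*}\bphi_i^{*}$ (which, for $\bphi_i^{*}\in R(\boldsymbol{E}^{*})$, inherits the regularity~$1+s^{*}$ by Theorem~\ref{th:regularidadfuente_dual}) and using the inf-sup estimate of Lemma~\ref{eq:infsupdiscreta} together with the VEM consistency splitting already carried out in the proof of Lemma~\ref{lmm:conv1}, each summand is bounded by $\|(\boldsymbol{T}-\boldsymbol{T}_h)\bphi_i\|_{1,\O}\cdot\|(\boldsymbol{T}^{*}-\boldsymbol{T}_h^{*})\bphi_i^{*}\|_{1,\O}\lesssim h^{\sigma+\sigma^{*}}$, which combined with the bound on $R_h$ produces the claimed estimate with $\widetilde{\bb{C}}$ depending only on $\nu,\|\bb{\beta}\|_{\infty,\O}$ and the $m$ quantities $c(\bphi_i,\bphi_i^{*})^{-1}$.

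The main obstacle will be the honest transport of the VEM consistency error (the terms involving $\bPi^{\nabla,\E}$, $\bPi_0^{\E}$, and $\Pi_0^{0,\E}$) through both the primal and dual contributions, so that neither factor in the product $h^{\sigma}\cdot h^{\sigma^{*}}$ is degraded when the discrete bilinear forms $a_h$ and $\widehat{A}_h$ replace their continuous counterparts. The expected resolution is to mirror, on the dual side, the projector bookkeeping already performed for Lemma~\ref{lmm:conv1}; since $\widehat{A}_h$ differs from $A_h$ only by a sign change and a swap of arguments, the same constant structure of~\eqref{eq:constant1} controls the adjoint contribution and the final $\widetilde{\bb{C}}$ is simply $\bb{C}\cdot\bb{C}^{*}$ times the conditioning of the biorthogonal bases of $R(\boldsymbol{E})$ and $R(\boldsymbol{E}^{*})$.
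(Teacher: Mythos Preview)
Your overall strategy matches the paper's: the gap bounds follow from Lemmas~\ref{lmm:conv1} and \ref{eq:adjoint_diff}, and for the eigenvalue you invoke the Osborn identity and seek a product structure on each term $\langle(\bT-\bT_h)\bphi_i,\bphi_i^{*}\rangle$. The difference is in how that product is produced. You propose to subtract $\bT_h^{*}\bphi_i^{*}$ and claim the summand is then controlled by $\|(\bT-\bT_h)\bphi_i\|_{1,\O}\,\|(\bT^{*}-\bT_h^{*})\bphi_i^{*}\|_{1,\O}$. The paper instead inserts the \emph{interpolant} $(\bu_k^{*})_I$ and $\mathcal{P}_h p^{*}$, obtaining three pieces: (I) $A$ of the primal error against the dual interpolation error (this is the product you have in mind), (II) the consistency residual $A_h-A$ at $(\bT_h\bu_k,p_h)$ and $((\bu_k^{*})_I,\mathcal{P}_h p^{*})$, and (III) the residual $c-c_h$.

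The point you have glossed over is that (II) and (III) are \emph{not} bounded by the operator-difference product; there is no Galerkin orthogonality here because $A_h\neq A$ and $c_h\neq c$ on $\mathcal{X}_h$, so subtracting any discrete quantity leaves these residuals behind. What makes them $O(h^{\sigma+\sigma^{*}})$ is a separate mechanism: the VEM forms are polynomially consistent in \emph{each} slot, so e.g.
\[
a_h^{\nabla,\E}(\bw_h,\bv_h)-a^{\nabla,\E}(\bw_h,\bv_h)\;\lesssim\;\nu\,\bigl|\bw_h-\bPi^{\nabla,\E}\bw_h\bigr|_{1,\E}\,\bigl|\bv_h-\bPi^{\nabla,\E}\bv_h\bigr|_{1,\E},
\]
and $(c-c_h)(\bu_k,(\bu_k^{*})_I)\le\|\bu_k-\bPi_0\bu_k\|_{0,\O}\|(\bu_k^{*})_I-\bPi_0(\bu_k^{*})_I\|_{0,\O}$. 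These are products of \emph{projection} errors, and one then adds and subtracts $\bT\bu_k$, $\bPi^{\nabla}\bT\bu_k$, $\bu_k^{*}$, $\bPi^{\nabla}\bu_k^{*}$ to recover the rates. The convective residual requires the same tensor-product manipulation with $\Pi_0^{0,\E}$ as in the proof of Lemma~\ref{lmm:conv1}, applied symmetrically in both arguments. Your last paragraph correctly flags this as the obstacle but does not resolve it; the resolution is exactly this bilinear consistency structure, carried out term by term for the symmetric, convective, and mass contributions, which is what the paper does.
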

\begin{proof}
The proof of the gap between the eigenspaces is a direct consequence of the convergence in norm between $\bT$ and $\bT_h$ as $h$ goes to zero.
We focus on the double order of convergence for the eigenvalues. Let $\{\bu_k\}_{k=1}^m$ be such that $\bT \bu_k=\kappa \bu_k$, for $k=1,\ldots,m$, and let $\{\bu_k^*\}_{k=1}^m$ be a dual basis for $\{\bu_k\}_{k=1}^m$. This basis satisfies $A((\bu_k,p),(\bu_l^*,p^*))=\delta_{k.l},$
where $\delta_{k.l}$ represents the Kronecker delta.
From \cite[Theorem 7.2]{BO}, the following identity holds true
\begin{equation*}
|\kappa - \widehat{\kappa}_{h}| \lesssim \displaystyle \frac{1}{m}\sum_{i=1}^{m} \left\lvert \left\langle (\bT-\bT_h)\bu_k, \bu_k^{*}\right\rangle \right\vert + \|\bT - \bT_h\|_{\mathcal{L}(\H_0^1(\O,\mathbb{C}))}\|\bT^* - \bT_h^*\|_{\mathcal{L}(\H_0^{1}(\O,\mathbb{C}))},
\end{equation*}
where $\langle\cdot,\cdot\rangle$ denotes the corresponding duality pairing. Observe that the last term is estimate invoking Lemma \ref{lmm:conv1}, obtaining
\begin{equation*}
\|\bT - \bT_h\|_{\mathcal{L}(\H_0^1(\O,\mathbb{C}))}\|\bT^* - \bT_h^*\|_{\mathcal{L}(\H_0^{1}(\O,\mathbb{C}))} \leq \bb{C}^{2}h^{\sigma+\sigma^*}.
\end{equation*}
Hence, we only need to control the first term in the above estimate. In order to obtain such a estimate, observe that the following identity holds
\begin{multline*}
\left\langle (\bT-\bT_h)\bu_k, \bu_k^{*}\right\rangle = A(((\bT-\bT_h)\bu_k,p-p_h),(\bu_k^*,p^*)) \\
= A(((\bT-\bT_h)\bu_k,p-p_h),(\bu_k^* - (\bu_k^*)_I,p^* - \mathcal{P}_h(p^*))) \\
+ A(((\bT-\bT_h)\bu_k,p-p_h),((\bu_k^*)_I,\mathcal{P}_h(p^*))) \\
= A(((\bT-\bT_h)\bu_k,p-p_h),(\bu_k^* - (\bu_k^*)_I,p^* - \mathcal{P}_h(p^*))) \\
+ A((\bT\bu_k,p),((\bu_k^*)_I,\mathcal{P}_h(p^*))) - A((\bT_h\bu_k,p_h),((\bu_k^*)_I,\mathcal{P}_h(p^*))) \\
= \underbrace{A(((\bT-\bT_h)\bu_k,p-p_h),(\bu_k^* - (\bu_k^*)_I,p^* - \mathcal{P}_h(p^*)))}_{\textrm{(I)}} \\
+ \underbrace{A_h((\bT_h\bu_k,p_h),((\bu_k^*)_I,\mathcal{P}_h(p^*))) - A((\bT_h\bu_k,p_h),((\bu_k^*)_I,\mathcal{P}_h(p^*)))}_{\textrm{(II)}} \\
+ \underbrace{\left[c(\bu_k,(\bu_k^*)_I) - c_h(\bu_k,(\bu_k^*)_I)\right]}_{\textrm{(III)}}.
\end{multline*}
Now, our task is to estimate the contributions $\textrm{(I)}$, $\textrm{(II)}$ and $\textrm{(III)}$. For $\textrm{(I)}$, using the Cauchy-Schwarz inequality and invoking Lemmas \ref{lmm:conv1}, \ref{estimate2} and \ref{estimate3} we obtain
\begin{multline}\label{eqI}
\textrm{(I)} \leq \nu\|(\bT-\bT_h)\bu_k\|_{1,\O}|\bu_k^* - (\bu_k^*)_I|_{1,\O} + \|\boldsymbol{\beta}\|_{\infty,\O}\|(\bT-\bT_h)\bu_k\|_{1,\O}\|\bu_k^*-(\bu_k^*)_I\|_{0,\O} \\ 
+ \|p-p_h\|_{0,\O}|\bu_k^* - (\bu_k^*)_I|_{1,\O} + \|p^* - \mathcal{P}_h(p^*)\|_{0,\O}\|(\bT-\bT_h)\bu_k\|_{1,\O} \\ 
\leq \max\{\bb{C}\|\boldsymbol{\beta}\|_{\infty,\O},\bb{C}\nu,\bb{C}\}h^{\sigma+\sigma^*}. 
\end{multline}
For $\textrm{(III)}$, using the definition of $\boldsymbol{\Pi}_{0}$ and it error estimates, together with the stability of the virtual interpolant, we obtain
\begin{multline}\label{eqIII}
\textrm{(III)} = c(\bu_k-\boldsymbol{\Pi}_0,(\bu_k^*)_I-\boldsymbol{\Pi}_0(\bu_k^*)_I) \\
\leq \|\bu_k-\boldsymbol{\Pi}_0\bu_k\|_{0,\O}\|(\bu_k^*)_I-\boldsymbol{\Pi}_0(\bu_k^*)_I)\|_{0,\O} \leq h^{\sigma+\sigma^*}.
\end{multline}
Finally, we need to estimate $\textrm{(II)}$. For this, first we note that
\begin{multline*}
\textrm{(II)} = \displaystyle \sum_{\E\in\CT_h} \left[a_h^{\nabla,\E}(\bT_h\bu_k,(\bu_k^*)_I) - a^{\nabla,\E}(\bT_h\bu_k,(\bu_k^*)_I) \right] \\ 
+ \left[\widehat{a}_h^{\boldsymbol{\beta},\E}(\bT_h\bu_k,(\bu_k^*)_I) - \widehat{a}^{\boldsymbol{\beta},\E}(\bT_h\bu_k,(\bu_k^*)_I) \right].
\end{multline*}
Hence, we need to control the terms associated to the gradients and the convective terms. So, we have
\begin{multline*}
\displaystyle \sum_{\E\in\CT_h} \left[a_h^{\nabla,\E}(\bT_h\bu_k,(\bu_k^*)_I) - a^{\nabla,\E}(\bT_h\bu_k,(\bu_k^*)_I) \right] \\
\leq C\nu \displaystyle \sum_{\E\in\CT_h} |\bT_h\bu_k - \boldsymbol{\Pi}^{\nabla,\E}\bT_h\bu_k|_{1,\E}|(\bu_k^*)_I - \boldsymbol{\Pi}^{\nabla,\E}(\bu_k^*)_I|_{1,\E} \\
\leq C\nu|\bT_h\bu_k - \boldsymbol{\Pi}^{\nabla}\bT_h\bu_k|_{1,h}|(\bu_k^*)_I - \boldsymbol{\Pi}^{\nabla}(\bu_k^*)_I|_{1,h},
\end{multline*}
and adding and subtracting $\bT\bu_k$ and $\boldsymbol{\Pi}^{\nabla}\bT\bu_k$ in $|\bT_h\bu_k - \boldsymbol{\Pi}^{\nabla}\bT_h\bu_k|_{1,h}$, and $\bu_k$ and $\boldsymbol{\Pi}^{\nabla}\bu_k^*$ in $|(\bu_k^*)_I - \boldsymbol{\Pi}^{\nabla}(\bu_k^*)_I|_{1,h}$, together with triangle inequality, we obtain
\begin{equation*}
\displaystyle \sum_{\E\in\CT_h} \left[a_h^{\nabla,\E}(\bT_h\bu_k,(\bu_k^*)_I) - a^{\nabla,\E}(\bT_h\bu_k,(\bu_k^*)_I) \right] \leq \max\{\bb{C}\nu,\bb{C}\} h^{\sigma+\sigma^*}, 
\end{equation*}
where in the last inequality we have invoked Lemmas \ref{lmm:conv1}, \ref{estimate2} and \ref{lmm:bh}. For the convective terms, we proceed as in the proof of Lemma \ref{eq:infsupdiscreta}, obtaining
\begin{multline*}
\displaystyle \sum_{\E\in\CT_h} \left[a_h^{\boldsymbol{\beta},\E}(\bT_h\bu_k,(\bu_k^*)_I) - a^{\boldsymbol{\beta},\E}(\bT_h\bu_k,(\bu_k^*)_I) \right]\\
 \leq 2\|(\nabla \bT_h\bu_k)\bb{\beta} - \boldsymbol{\Pi}_0((\nabla \bT_h\bu_k)\bb{\beta})\|_{0,\O}\|(\bu_k^*)_I - \boldsymbol{\Pi}_0(\bu_k^*)_I\|_{0,\O} \\ 
+ 2|\bT_h\bu_k - \boldsymbol{\Pi}^\nabla \bT_h\bu_k|_{1,h}\left(\|(\boldsymbol{\beta}\otimes(\boldsymbol{\Pi}_0(\bu_k^*)_I - (\bu_k^*)_I))^{t}\|_{0,\O}\right. \\ 
\left. + \|(\boldsymbol{\beta}\otimes(\bu_k^*)_I)^{t} - \boldsymbol{\Pi}_0(\boldsymbol{\beta}\otimes(\bu_k^*)_I)^{t}\|_{0,\O}\right).
\end{multline*}
Let us focus  on the term $\|(\nabla \bT_h\bu_k)\bb{\beta} - \boldsymbol{\Pi}_0((\nabla \bT_h\bu_k)\bb{\beta})\|_{0,\O}$ of the previous  estimate. On this term we add and subtract the terms  $(\nabla \bT\bu_k)\bb{\beta}$ and $\boldsymbol{\Pi}_0((\nabla \bT\bu_k)\bb{\beta})$. Now, for the term $|\bT_h\bu_k - \boldsymbol{\Pi}^\nabla \bT_h\bu_k|_{1,\O}$ we  add and subtract $\bT\bu_k$ and $\boldsymbol{\Pi}^\nabla \bT\bu_k$. Then, applying triangle inequality in each term,  together with Lemmas \ref{lmm:conv1} and the error estimates for $\boldsymbol{\Pi}_0$, yields to
\begin{equation*}
\displaystyle \sum_{\E\in\CT_h} \left[a_h^{\boldsymbol{\beta},\E}(\bT_h\bu_k,(\bu_k^*)_I) - a^{\boldsymbol{\beta},\E}(\bT_h\bu_k,(\bu_k^*)_I) \right] \leq 2\max\{\bb{C}\|\bb{\beta}\|_{\infty,\O},\|\bb{\beta}\|_{\infty,\O}\}h^{\sigma+\sigma^*}.
\end{equation*} 
Then, changing the roles of $\bb{T}_h\bb{u}_k$ and $(\bb{u}_k^*)_I$ and proceeding in a similar way, we conclude that 
\begin{equation*}
\displaystyle \sum_{\E\in\CT_h} \left[\widehat{a}_h^{\boldsymbol{\beta},\E}(\bT_h\bu_k,(\bu_k^*)_I) - \widehat{a}^{\boldsymbol{\beta},\E}(\bT_h\bu_k,(\bu_k^*)_I) \right] \leq 2\max\{\bb{C}\|\bb{\beta}\|_{\infty,\O},\|\bb{\beta}\|_{\infty,\O}\}h^{\sigma+\sigma^*}.
\end{equation*}
Therefore, we obtain that
\begin{equation}\label{eqII}
\textrm{(II)} \leq \max\{\bb{C}\|\boldsymbol{\beta}\|_{\infty,\O}, \bb{C}\nu, \|\bb{\beta}\|_{\infty,\O}, \bb{C}\}h^{\sigma+\sigma^*}.
\end{equation}

Finally, gathering \eqref{eqI}, \eqref{eqIII} and \eqref{eqII}, and defining 
\begin{equation*}
\widetilde{\bb{C}} := \max\{\bb{C}\|\boldsymbol{\beta}\|_{\infty,\O},\bb{C}\nu,\|\bb{\beta}\|_{\infty,\O},\bb{C},1\},
\end{equation*}
where $\bb{C}$ is defined in \eqref{eq:constant1}, we conclude the proof.
\end{proof}

Our next goal is to obtain an $\L^{2}$ norm estimate for the velocity. To do this task we use a standard duality argument.
\begin{lemma}\label{lmm:duality1}
Let $\boldsymbol{f} \in R(\boldsymbol{E})$ be such that $\widehat{\bu} := \bT\boldsymbol{f}$ and $\widehat{\bu}_h := \bT_h\boldsymbol{f}$. Then, the following estimate holds
\begin{equation*}
\|\widehat{\bu}-\widehat{\bu}_h\|_{0,\O} \leq \bb{\mathtt{C}}h^{\widetilde{r}+\sigma}\|\boldsymbol{f}\|_{1,\O},
\end{equation*}
where $\bb{\mathtt{C}}>0$ is independent of $h$.
\end{lemma}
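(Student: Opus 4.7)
The plan is a standard Aubin--Nitsche duality argument adapted to the mixed velocity--pressure formulation and to the VEM consistency gap. Introduce the auxiliary adjoint source problem driven by the current error: find $(\bphi,\xi)\in\mathcal{X}$ such that
\begin{equation*}
\widehat{A}((\bv,q),(\bphi,\xi)) = c(\bv,\widehat{\bu}-\widehat{\bu}_h) \qquad \forall (\bv,q)\in\mathcal{X}.
\end{equation*}
By the $\L^2$-data analogue of Theorem \ref{th:regularidadfuente_dual}, there exists some $\widetilde{r}>0$ for which $\bphi\in\H^{1+\widetilde{r}}(\Omega,\mathbb{C})^2$, $\xi\in\H^{\widetilde{r}}(\Omega,\mathbb{C})$, with $\|\bphi\|_{1+\widetilde{r},\O}+\|\xi\|_{\widetilde{r},\O}\lesssim \|\widehat{\bu}-\widehat{\bu}_h\|_{0,\O}$. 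Since $\bphi$ is divergence free and $\widehat{\bu}-\widehat{\bu}_h$ is pointwise divergence free (as $\div\bV_h\subset\mathbb{P}_0$ elementwise), testing the dual problem with $(\widehat{\bu}-\widehat{\bu}_h,\widehat{p}-\widehat{p}_h)$ collapses both $b$-terms and produces the clean identity
\begin{equation*}
\|\widehat{\bu}-\widehat{\bu}_h\|_{0,\O}^{2} = a(\widehat{\bu}-\widehat{\bu}_h,\bphi).
\end{equation*}

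Next, I would introduce the virtual interpolant $\bphi_I\in\bV_h$ of $\bphi$ from Lemma \ref{estimate2} and split $a(\widehat{\bu}-\widehat{\bu}_h,\bphi) = a(\widehat{\bu}-\widehat{\bu}_h,\bphi-\bphi_I) + a(\widehat{\bu}-\widehat{\bu}_h,\bphi_I)$. The first piece is bounded by decomposing $a=a^{\nabla}+a^{\bb{\beta}}$, applying Cauchy--Schwarz, the interpolation estimate $\|\bphi-\bphi_I\|_{1,\O}\lesssim h^{\widetilde{r}}\|\bphi\|_{1+\widetilde{r},\O}$, the energy bound $\|\widehat{\bu}-\widehat{\bu}_h\|_{1,\O}\lesssim h^{\sigma}\|\boldsymbol{f}\|_{1,\O}$ from Lemma \ref{lmm:conv1}, and the regularity bound on $(\bphi,\xi)$, which jointly yield an $h^{\sigma+\widetilde{r}}\|\boldsymbol{f}\|_{1,\O}\|\widehat{\bu}-\widehat{\bu}_h\|_{0,\O}$ contribution.

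For the Galerkin piece, subtracting the discrete source equation tested against $\bphi_I$ from the continuous one yields
\begin{equation*}
a(\widehat{\bu}-\widehat{\bu}_h,\bphi_I) = (a_h-a)(\widehat{\bu}_h,\bphi_I) + (c-c_h)(\boldsymbol{f},\bphi_I) - b(\bphi_I-\bphi,\widehat{p}-\widehat{p}_h),
\end{equation*}
where I have used $b(\bphi,\widehat{p}-\widehat{p}_h)=0$ because $\div\bphi=0$. Every term on the right is now a VEM consistency gap that I would estimate elementwise by exactly the same telescoping strategy as in the proof of Lemma \ref{lmm:conv1}: inserting the projectors $\bPi^{\nabla,\E}$, $\bPi_0^{\E}$, $\Pi_0^{0,\E}$ and additionally adding and subtracting $\widehat{\bu}$, $\bPi^{\nabla,\E}\widehat{\bu}$, $\bphi$, and $\bPi^{\nabla,\E}\bphi$ in each cross product, so that every resulting product of error factors carries one power $h^{\sigma}$ from the primal side and one power $h^{\widetilde{r}}$ from the dual side. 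After summing over elements and applying the regularity bound on $(\bphi,\xi)$, dividing through by $\|\widehat{\bu}-\widehat{\bu}_h\|_{0,\O}$ yields the claim, with $\bb{\mathtt{C}}$ absorbing $\bb{C}$, $\nu$, $\|\bb{\beta}\|_{\infty,\O}$ and $C_{pf}$.

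The main technical obstacle is the nonsymmetric convective consistency term $\widehat{a}_h^{\bb{\beta},\E}(\widehat{\bu}_h,\bphi_I)-\widehat{a}^{\bb{\beta},\E}(\widehat{\bu}_h,\bphi_I)$. The three-term splitting used in the proof of Lemma \ref{lmm:conv1} was designed to extract a single power $h^{\sigma}$; here one must instead ensure that the factor attached to $\widehat{\bu}_h$ (close to $\widehat{\bu}$ at rate $h^{\sigma}$) and the factor attached to $\bphi_I$ (close to $\bphi$ at rate $h^{\widetilde{r}}$) both contribute their own rate, rather than one collapsing to $O(1)$. This requires a careful rearrangement of the pieces $\bb{\beta}\otimes(\bphi_I-\bphi)$, $\nabla\bPi^{\nabla,\E}\widehat{\bu}_h-\nabla\widehat{\bu}_h$, and $(\nabla\widehat{\bu}_h)\bb{\beta}-\bPi_0^{\E}((\nabla\widehat{\bu}_h)\bb{\beta})$ so that each product pairs a primal error factor with a dual interpolation factor; once this bookkeeping is done, the remaining bounds follow from Cauchy--Schwarz, stability of $\bPi^{\nabla,\E}$, and the approximation properties of $\bPi_0^{\E}$.
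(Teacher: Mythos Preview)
Your proposal is correct and follows essentially the same Aubin--Nitsche duality argument as the paper: introduce the adjoint source problem driven by $\widehat{\bu}-\widehat{\bu}_h$, test with the error, split off the interpolant $\bphi_I$ (the paper writes $\boldsymbol{z}_I$), and reduce the Galerkin piece to the three consistency gaps $(c-c_h)(\boldsymbol{f},\bphi_I)$, $b(\bphi-\bphi_I,\widehat{p}-\widehat{p}_h)$, and $(a_h-a)(\widehat{\bu}_h,\bphi_I)$, each estimated by the add/subtract bookkeeping you describe. The one difference is that you exploit the \emph{exact} divergence-free property of $\widehat{\bu}_h$ (valid here because $\div\bV_h^{\E}\subset\mathbb{P}_0(\E)$ and $b(\widehat{\bu}_h,q_h)=0$ for all piecewise constants) to kill the term $b(\widehat{\bu}-\widehat{\bu}_h,\xi)$ immediately; the paper instead keeps this term and handles it by inserting $\mathcal{P}_h(\phi)$, using $b(\widehat{\bu}-\widehat{\bu}_h,\mathcal{P}_h(\phi))=0$ and then bounding $b(\widehat{\bu}-\widehat{\bu}_h,\phi-\mathcal{P}_h(\phi))$ via Lemmas~\ref{lmm:conv1} and~\ref{estimate3}. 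Your route is slightly cleaner but relies on the specific divergence-free structure of this VEM space, whereas the paper's argument would survive in a non-divergence-free pair.
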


\begin{proof}
Let us consider the following auxiliary problem: Find $(\bb{z},\phi) \in \mathcal{X}$ such that 
\begin{equation}\label{eq:auxiliar}
	\left\{
	\begin{array}{rcll}
a(\bv,\bb{z}) + b(\bv,\phi)&=& c(\bv,\widehat{\bu}-\widehat{\bu}_h)&\forall \bv\in \H_0^1(\O,\mathbb{C})^2,\\
b(\bb{z},q)&=&0&\forall q\in \L_0^2(\O,\mathbb{C}).
\end{array}
	\right.
\end{equation}
Observe that this problem is well-posed and its solution satisfies the following estimate
\begin{equation}\label{eq:additionalreg_aux}
\|\bb{z}\|_{1+r,\O} + \|\phi\|_{r,\O} \leq C\|\widehat{\bu}-\widehat{\bu}_h\|_{0,\O},
\end{equation}
where $C>0$ is the constant given by Theorem \ref{th:regularidadfuente}. Moreover, $\bb{z}$ and $\phi$ satisfies the estimates \eqref{eq:estimatefuente_velocity} and \eqref{eq:estimatefuente_pressure} respectively. Now we need to estimate the term on the right-hand side of \eqref{eq:additionalreg_aux}. First, we define $\widetilde{r} := \min\{1,r\}$. Now, testing \eqref{eq:auxiliar} with $\bv := \widehat{\bu}-\widehat{\bu}_h$ we obtain
\begin{equation*}
\begin{split}
\|\widehat{\bu}-\widehat{\bu}_h\|_{0,\O}^{2} &= c(\widehat{\bu}-\widehat{\bu}_h,\widehat{\bu}-\widehat{\bu}_h) \\
&= a(\widehat{\bu}-\widehat{\bu}_h,\bb{z}) + b(\widehat{\bu}-\widehat{\bu}_h,\phi) \\
&= \underbrace{a(\widehat{\bu}-\widehat{\bu}_h,\bb{z}-\bb{z}_I)}_{\textrm{(I)}} + \underbrace{b(\widehat{\bu}-\widehat{\bu}_h,\phi-\mathcal{P}_h(\phi))}_{\textrm{(II)}} + \underbrace{a(\widehat{\bu}-\widehat{\bu}_h,\bb{z}_I)}_{\textrm{(III)}},
\end{split}
\end{equation*}
where in the last equality we have used that $b(\widehat{\bu}-\widehat{\bu}_h,\mathcal{P}_h(\phi)) = 0$ from the second equations of \eqref{def:oseen_system_weak_source} and \eqref{eq:weak_oseen_source_disc}. 

The task now is to estimate each of the contributions $\textrm{(I)}$, $\textrm{(II)}$ and $\textrm{(III)}$ on the above identity. For $\textrm{(I)}$, using the  Cauchy-Schwarz inequality and invoking Lemmas \ref{lmm:conv1} and \ref{estimate2} we have
\begin{multline}\label{eq:estimateI}
\textrm{(I)} \leq \nu\|\widehat{\bu}-\widehat{\bu}_h\|_{1,\O}|\bb{z}-\bb{z}_I|_{1,\O} \leq \bb{C}\nu h^{\widetilde{r}+\sigma}\|\bb{f}\|_{1,\O}\|\bb{z}\|_{1+\widetilde{r},\O} \\
\leq \bb{C}C\nu h^{\widetilde{r}+\sigma}\|\bb{f}\|_{1,\O}\|\widehat{\bu}-\widehat{\bu}_h\|_{0,\O}.
\end{multline}
For $\textrm{(II)}$, we invoke Lemmas \ref{lmm:conv1} and \ref{estimate3}, obtaining
\begin{multline}\label{eq:estimateII}
\textrm{(II)} \leq \|\widehat{\bu}-\widehat{\bu}_h\|_{1,\O}\|\phi-\mathcal{P}_h(\phi)\|_{0,\O} \leq \bb{C}h^{\widetilde{r}+\sigma}\|\bb{f}\|_{1,\O}\|\phi\|_{\widetilde{r},\O} \\ 
\leq \bb{C}Ch^{\widetilde{r}+\sigma}\|\bb{f}\|_{1,\O}\|\widehat{\bu}-\widehat{\bu}_h\|_{0,\O}.
\end{multline}
Finally, observe that for $\textrm{(III)}$, using the first equations in \eqref{def:oseen_system_weak_source} and \eqref{eq:weak_oseen_source_disc}, the following identity can be obtained
\begin{equation*}
\begin{split}
a(\widehat{\bu}-\widehat{\bu}_h,\bb{z}_I) &= a(\widehat{\bu},\bb{z}_I) - a(\widehat{\bu}_h,\bb{z}_I) \\
&= c(\bb{f},\bb{z}_I) - b(\bb{z}_I,p) - a(\widehat{\bu}_h,\bb{z}_I) \\
&= \underbrace{c(\bb{f},\bb{z}_I) - c_h(\bb{f},\bb{z}_I)}_{\textrm{(IV)}} + \underbrace{b(\bb{z}-\bb{z}_I,p-p_h)}_{\textrm{(V)}} + \underbrace{a_h(\widehat{\bu}_h,\bb{z}_I) - a(\widehat{\bu}_h,\bb{z}_I)}_{\textrm{(VI)}}.
\end{split}
\end{equation*}
Note that the contributions $\textrm{(IV)}$ and $\textrm{(V)}$ can be easily estimated, obtaining 
\begin{equation*}
\begin{split}
\textrm{(IV)} &\leq \|\bb{f} - \bb{\Pi}_0\bb{f}\|_{0,\O}\|\bb{z}_I-\bb{\Pi}_0\bb{z}_I\|_{0,\O} \leq h^{\widetilde{r}+\sigma}\|\bb{f}\|_{1,\O}\|\bb{z}\|_{1,\O}, \\
\textrm{(V)} &\leq \|p-p_h\|_{0,\O}|\bb{z}-\bb{z}_I|_{1,\O} \leq \bb{C}h^{\widetilde{r}+\sigma}\|\bb{f}\|_{1,\O}\|\bb{z}\|_{1+\widetilde{r},\O},
\end{split}
\end{equation*}
whereas for $\textrm{(VI)}$ we have
\begin{equation*}
\textrm{(VI)} \leq 2\max\{\bb{C}\|\bb{\beta}\|_{\infty,\O},\bb{C}\nu,\|\bb{\beta}\|_{\infty,\O},1\}h^{\widetilde{r}+\sigma}\|\bb{f}\|_{1,\O}\|\bb{z}\|_{1,\O}.
\end{equation*}
Hence, we obtain for $\textrm{(III)}$ that
\begin{equation}\label{eq:estimateIII}
\textrm{(III)} \leq \widetilde{C}^*h^{\widetilde{r}+\sigma}\|\bb{f}\|_{1,\O}\|\bb{z}\|_{1+\widetilde{r},\O},
\end{equation}
where 
\begin{equation*}
\widehat{C}^* := \max\left\lbrace \dfrac{C_{pf}}{\nu}, \bb{C}C, 2\max\{\bb{C}\|\bb{\beta}\|_{\infty,\O}, \bb{C}\nu,\|\bb{\beta}\|_{\infty,\O},1\}\dfrac{C_{pf}}{\nu}\right\rbrace,
\end{equation*}
and gathering \eqref{eq:estimateI}, \eqref{eq:estimateII} and \eqref{eq:estimateIII}, together with \eqref{eq:additionalreg_aux}, we conclude that 
\begin{equation*}
\|\widehat{\bu}-\widehat{\bu}_h\|_{0,\O} \leq \textbf{C}h^{\widetilde{r}+\sigma}\|\bb{f}\|_{1,\O}, \quad \mathtt{C} := \max\{\bb{C}C\nu,\bb{C}C,\widetilde{C}^*\}.
\end{equation*}
This completes the proof.
\end{proof}

Observe that similar arguments can be done for the dual problem. Since the proof is identically, we skip the proof. 
\begin{lemma}
Let $\boldsymbol{f} \in R(\boldsymbol{E}^*)$ be such that $\widehat{\bu}^* := \bT^*\boldsymbol{f}$ and $\widehat{\bu}_h^* := \bT_h^*\boldsymbol{f}$. Then, the following estimate holds
\begin{equation*}
\|\widehat{\bu}^*-\widehat{\bu}_h^*\|_{0,\O} \leq \mathtt{C}h^{\widetilde{r}^*+\sigma^*}\|\boldsymbol{f}\|_{1,\O},
\end{equation*}
where $\mathtt{C}>0$  is defined in Lemma \ref{lmm:duality1}.
\end{lemma}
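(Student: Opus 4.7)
The plan is to mirror the duality argument used in Lemma~\ref{lmm:duality1}, but swap the roles of primal and dual problems. Specifically, I would set up an auxiliary \emph{primal} Oseen source problem whose right-hand side is $\widehat{\bu}^*-\widehat{\bu}_h^*$: find $(\bb{z},\phi)\in\mathcal{X}$ satisfying the primal system
\begin{equation*}
\left\{
\begin{array}{rcll}
a(\bb{z},\bv)+b(\bv,\phi)&=&c(\widehat{\bu}^*-\widehat{\bu}_h^*,\bv)&\forall \bv\in \H_0^1(\O,\mathbb{C})^2,\\
b(\bb{z},q)&=&0&\forall q\in \L_0^2(\O,\mathbb{C}).
\end{array}\right.
\end{equation*}
By Theorem~\ref{th:regularidadfuente} applied with datum $\widehat{\bu}^*-\widehat{\bu}_h^*$ and setting $\widetilde{r}^*:=\min\{1,r^*\}$ (with $r^*$ the regularity exponent associated to this auxiliary problem), one has $\|\bb{z}\|_{1+\widetilde{r}^*,\O}+\|\phi\|_{\widetilde{r}^*,\O}\lesssim \|\widehat{\bu}^*-\widehat{\bu}_h^*\|_{0,\O}$.

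Next, I would test the auxiliary problem with $\bv=\widehat{\bu}^*-\widehat{\bu}_h^*$. Using that $b(\widehat{\bu}^*-\widehat{\bu}_h^*,\mathcal{P}_h(\phi))=0$ from the second equations of \eqref{def:oseen_system_weak_dual_source} and \eqref{def:oseen_system_weak_disc_source_dual}, the square of the $L^2$-norm splits as
\begin{equation*}
\|\widehat{\bu}^*-\widehat{\bu}_h^*\|_{0,\O}^{2} = \underbrace{a(\bb{z}-\bb{z}_I,\widehat{\bu}^*-\widehat{\bu}_h^*)}_{\textrm{(I)}}+\underbrace{b(\widehat{\bu}^*-\widehat{\bu}_h^*,\phi-\mathcal{P}_h(\phi))}_{\textrm{(II)}}+\underbrace{a(\bb{z}_I,\widehat{\bu}^*-\widehat{\bu}_h^*)}_{\textrm{(III)}},
\end{equation*}
where the order of arguments is interchanged to match the sesquilinear form of the dual formulation \eqref{def:oseen_system_weak_dual_source}. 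Terms (I) and (II) are controlled directly by Cauchy--Schwarz combined with Lemma~\ref{eq:adjoint_diff} (yielding the $h^{\sigma^*}$ factor in the dual error) and Lemmas~\ref{estimate2}, \ref{estimate3} (giving the additional $h^{\widetilde{r}^*}$ from the interpolation of $\bb{z}$ and the projection of $\phi$).

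For the last term (III), the key identity to exploit is the variational equation satisfied by the continuous and discrete duals: by the first equations of \eqref{def:oseen_system_weak_dual_source} and \eqref{def:oseen_system_weak_disc_source_dual}, I can rewrite $a(\bb{z}_I,\widehat{\bu}^*-\widehat{\bu}_h^*)$ as a sum analogous to (IV)+(V)+(VI) in Lemma~\ref{lmm:duality1}: a consistency term $c(\bb{z}_I,\bb{f})-c_h(\bb{z}_I,\bb{f})$, a pressure term $b(\bb{z}-\bb{z}_I,p^*-p_h^*)$, and a sesquilinear-form consistency term $a_h(\bb{z}_I,\widehat{\bu}_h^*)-a(\bb{z}_I,\widehat{\bu}_h^*)$. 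Each of these is bounded exactly as in the proof of Lemma~\ref{lmm:duality1}, using the approximation properties of $\bPi_0$, Lemma~\ref{lmm:bh} for the polynomial approximation of $\bb{z}$, and the same elementwise splitting carried out in Lemma~\ref{lmm:conv1} (including the delicate handling of the convective term $\widehat{a}^{\boldsymbol{\beta}}$ via the identity $\Pi_0^{0,\E}\nabla\bv=\nabla\bPi^{\nabla,\E}\bv$).

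The main obstacle, as in the primal case, is the term (VI) involving the convective consistency error; but since this was already treated in detail in Lemmas~\ref{lmm:conv1} and~\ref{lmm:duality1}, and since the dual formulation merely reverses the order of arguments in $\widehat{a}^{\boldsymbol{\beta}}$ (which is skew-symmetric in the sense that $\widehat{a}^{\boldsymbol{\beta}}(\bw,\bv)=-\widehat{a}^{\boldsymbol{\beta}}(\bv,\bw)$), exactly the same estimates carry over. Collecting all contributions and cancelling one factor of $\|\widehat{\bu}^*-\widehat{\bu}_h^*\|_{0,\O}$ against the right-hand side yields the claimed bound with $\mathtt{C}$ coinciding with the constant of Lemma~\ref{lmm:duality1}.
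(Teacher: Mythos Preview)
Your proposal is correct and is precisely the approach the paper intends: the paper itself omits the proof, stating only that ``similar arguments can be done for the dual problem'' and ``the proof is identical'' to Lemma~\ref{lmm:duality1}. Your mirroring of the duality argument---setting up a primal auxiliary problem, testing with the dual error, splitting into the same three (and then three more) contributions, and invoking Lemma~\ref{eq:adjoint_diff} in place of Lemma~\ref{lmm:conv1}---is exactly that identical argument.
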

\subsection{Error in $\L^2$ norm}
Our aim is to derive an error estimate for the $\L^2$ norm of the velocity. To do this task, we define the solution operator on the space $\L^{2}(\O,\mathbb{C})^2$ as $\widetilde{\bT} : \L^{2}(\O,\mathbb{C})^2 \rightarrow \L^{2}(\O,\mathbb{C})^2$, which is defined by $\widetilde{\bT}\bb{f} := \widetilde{\bu}$, where $\widetilde{\bu}$ is solution of \eqref{def:oseen_system_weak_source}. Observe that $\widetilde{\bT}$ is well-defined and compact, but not self-adjoint. Hence, we define the dual operator of $\widetilde{\bT}$ as $\widetilde{\bT}^* : \L^2(\O,\mathbb{C})^2 \rightarrow \L^2(\O,\mathbb{C})^2$ as $\widetilde{\bT}^*\bb{f} := \widetilde{\bu}^*$, where $\widetilde{\bu}^*$ is solution of \eqref{def:oseen_system_weak_dual_source}. Finally, we remark that the spectra of $\widetilde{\bT}$ and $\bT$ coincide. 

Now we present the following result, in which the convergence of $\bT_h$ to $\widetilde{\bT}$ as $h\downarrow 0$ is obtained.
\begin{lemma}\label{eq:duality2}
For every $\bb{f} \in \L^2(\O,\mathbb{C})^2$, the following estimate holds
\begin{equation*}
\|(\widetilde{\bT}-\bT_h)\bb{f}\|_{0,\O} \leq \bb{C}h^{\sigma}\|\bb{f}\|_{0,\O},
\end{equation*}
where $\bb{C}>0$ is defined in Lemma \ref{lmm:conv1}.
\end{lemma}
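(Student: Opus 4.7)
The plan is to mirror the argument of Lemma \ref{lmm:conv1}, but to track the dependence on the source norm more carefully. Although Lemma \ref{lmm:conv1} yields $\|(\bT-\bT_h)\bb{f}\|_{1,\O}\lesssim h^\sigma\|\bb{f}\|_{1,\O}$, the rate $h^\sigma$ is driven by the $\H^{1+s}$-regularity of the velocity, which for $\L^2$ data can be controlled by $\|\bb{f}\|_{0,\O}$ rather than $\|\bb{f}\|_{1,\O}$. Combining this observation with the embedding $\H_0^1(\O,\mathbb{C})^2\hookrightarrow \L^2(\O,\mathbb{C})^2$ should deliver the claimed estimate.

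The execution proceeds in three steps. First, $\bT_h$ is extended from $\H_0^1(\O,\mathbb{C})^2$ to $\L^2(\O,\mathbb{C})^2$: the discrete right-hand side $c_h(\bb{f},\bv_h)=(\boldsymbol{\Pi}_0\bb{f},\boldsymbol{\Pi}_0\bv_h)_{0,\O}$ makes sense for any $\bb{f}\in \L^2(\O,\mathbb{C})^2$, so that $\bT_h\bb{f}\in \bV_h$ remains well-defined via \eqref{eq:weak_oseen_source_disc}. Second, one invokes the classical Stokes-type regularity with $\L^2$ data: since $(\bb{\beta}\cdot\nabla)\widetilde{\bu}$ is an admissible $\L^2$ perturbation (via $\bb{\beta}\in \L^{\infty}$ and \eqref{eq:estimatefuente_velocity}), a bootstrap argument gives $\widetilde{\bu}:=\widetilde{\bT}\bb{f}\in \H^{1+s}(\O,\mathbb{C})^2$ and $\widetilde{p}\in \H^{s}(\O,\mathbb{C})$ with
\[
\|\widetilde{\bu}\|_{1+s,\O}+\|\widetilde{p}\|_{s,\O}\lesssim \|\bb{f}\|_{0,\O}.
\]
Third, one revisits the error decomposition of Lemma \ref{lmm:conv1} term by term: each contribution arising from the polynomial approximation $\widetilde{\bu}_\pi$, the virtual interpolant $\widetilde{\bu}_I$, the pressure projector $\mathcal{P}_h$, and both convective sub-terms (I)--(II) is bounded by $h^\sigma(\|\widetilde{\bu}\|_{1+s,\O}+\|\widetilde{p}\|_{s,\O})\|\bv_h\|_{1,\O}$, which by the improved regularity becomes $h^\sigma\|\bb{f}\|_{0,\O}\|\bv_h\|_{1,\O}$. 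The only contribution featuring the source directly, $c(\bb{f},\bv_h-\boldsymbol{\Pi}_0\bv_h)$, is handled via the orthogonality of $\boldsymbol{\Pi}_0$ onto $[\mathbb{P}_1(K)]^2$, yielding $|c(\bb{f},\bv_h-\boldsymbol{\Pi}_0\bv_h)|\lesssim h\|\bb{f}\|_{0,\O}\|\bv_h\|_{1,\O}\leq h^\sigma\|\bb{f}\|_{0,\O}\|\bv_h\|_{1,\O}$. Summing over elements and invoking the discrete inf-sup stability from Lemma \ref{eq:infsupdiscreta} one arrives at $\|\widetilde{\bu}-\bT_h\bb{f}\|_{1,\O}\lesssim h^\sigma\|\bb{f}\|_{0,\O}$.

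To close, since $(\widetilde{\bT}-\bT_h)\bb{f}\in \H_0^1(\O,\mathbb{C})^2$, the Poincar\'e-Friedrichs inequality gives $\|(\widetilde{\bT}-\bT_h)\bb{f}\|_{0,\O}\lesssim \|(\widetilde{\bT}-\bT_h)\bb{f}\|_{1,\O}$, and the constant $\bb{C}$ can be traced back to that of Lemma \ref{lmm:conv1}. The main obstacle is the improved $\L^2$-regularity of the second step, since Theorem \ref{th:regularidadfuente} is formulated only for $\H_0^1$ sources; however this is a classical Stokes regularity result, and its transfer to the Oseen system follows from treating the convective contribution as a lower-order $\L^2$ perturbation and closing via the a priori bound \eqref{eq:estimatefuente_velocity}.
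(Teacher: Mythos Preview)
Your proposal is correct and follows essentially the same route as the paper, whose proof consists of the single sentence ``the proof follows the same steps of those in the proof of Lemma \ref{lmm:conv1}, but considering the source $\bb{f}\in\L^2(\O,\mathbb{C})^2$.'' You have merely made explicit two points the paper leaves implicit: that the regularity estimate of Theorem \ref{th:regularidadfuente} must be invoked with $\L^2$ (rather than $\H_0^1$) data, and that the source term $c(\bb{f},\bv_h-\boldsymbol{\Pi}_0\bv_h)$ already yields $h\|\bb{f}\|_{0,\O}\|\bv_h\|_{1,\O}$ directly from the approximation property of $\boldsymbol{\Pi}_0$ on $\bv_h$, without ever needing $\|\bb{f}\|_{1,\O}$.
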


\begin{proof}
The proof follows the same steps of those in the proof of Lemma \ref{lmm:conv1}, but considering the source $\bb{f} \in \L^{2}(\O,\mathbb{C})^2$.
\end{proof}

Similar arguments can be used in order to obtain the convergence of $\bT_h^*$ to $\widetilde{\bT}^{*}$ as $h\downarrow 0$, which we present in the following result.

\begin{lemma}
\label{eq:duality3} 
For every $\bb{f} \in \L^2(\O,\mathbb{C})^2$, the following estimate holds
\begin{equation*}
\|(\widetilde{\bT}^*-\bT_h^*)\bb{f}\|_{0,\O} \leq \bb{C}h^{\sigma^*}\|\bb{f}\|_{0,\O},
\end{equation*}
where $\bb{C}>0$ is defined in Lemma \ref{lmm:conv1}.
\end{lemma}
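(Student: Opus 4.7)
The plan is to mirror the proof of Lemma \ref{lmm:conv1} but applied to the dual operator pair $(\widetilde{\bT}^{*}, \bT_h^{*})$ with $\L^{2}$ source data. Write $\widehat{\bu}^{*} := \widetilde{\bT}^{*}\bb{f}$ and $\widehat{\bu}_h^{*} := \bT_h^{*}\bb{f}$, so that the corresponding pressures solve \eqref{def:oseen_system_weak_dual_source} and \eqref{def:oseen_system_weak_disc_source_dual} with right-hand side $\bb{f} \in \L^{2}(\O,\mathbb{C})^{2}$. The usual Stokes-type shift theorem for the adjoint problem with $\L^{2}$ data plays the role of Theorem \ref{th:regularidadfuente_dual} in this lower-regularity setting and delivers $\widehat{\bu}^{*} \in \H^{1+s^{*}}(\O,\mathbb{C})^{2}$, $\widehat{p}^{*} \in \H^{s^{*}}(\O,\mathbb{C})$ together with $\|\widehat{\bu}^{*}\|_{1+s^{*},\O} + \|\widehat{p}^{*}\|_{s^{*},\O} \lesssim \|\bb{f}\|_{0,\O}$, a bound we would invoke throughout.

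First, I would insert the interpolant $\widehat{\bu}_I^{*}$ supplied by Lemma \ref{estimate2} and the pressure projection $\mathcal{P}_h(\widehat{p}^{*})$ from Lemma \ref{estimate3}, exactly as in \eqref{eq:danilo}. The direct interpolation contributions are immediately of order $h^{\sigma^{*}}\|\bb{f}\|_{0,\O}$ via Lemmas \ref{estimate2} and \ref{estimate3} combined with the regularity estimate just recalled. Next, I would apply the dual discrete inf-sup condition of Lemma \ref{eq:infsupdiscretadual} to bound the remaining term $\|\widehat{\bu}_I^{*}-\widehat{\bu}_h^{*}\|_{1,\O} + \|\widehat{p}_h^{*}-\mathcal{P}_h(\widehat{p}^{*})\|_{0,\O}$ by an expression of the form $\widehat{A}_h((\widehat{\bu}_I^{*}-\widehat{\bu}_h^{*}, \widehat{p}_h^{*}-\mathcal{P}_h(\widehat{p}^{*})), (\bv_h, q_h))$ for an admissible normalized test pair $(\bv_h, q_h) \in \mathcal{X}_h$.

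Then I would expand this sesquilinear form by adding and subtracting a piecewise polynomial $\widehat{\bu}_\pi^{*}$ from Lemma \ref{lmm:bh}, and splitting the result into the same four blocks that appear in the primal proof leading to \eqref{eq1}--\eqref{eq5}: a gradient-consistency piece bounded using VEM consistency and stability of $\boldsymbol{\Pi}^{\nabla,\E}$, a pressure block controlled by Lemma \ref{estimate3} as in \eqref{eq3}, a convective-consistency piece treated via the identity $\Pi_0^{0,\E}\nabla \bv = \nabla \boldsymbol{\Pi}^{\nabla,\E}\bv$ together with $\L^{2}$-approximation for $\boldsymbol{\Pi}_0^{\E}$, and the source piece $c(\bb{f}, \bv_h - \boldsymbol{\Pi}_0 \bv_h)$. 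The source piece is exactly where the switch to $\L^{2}$ data is absorbed: Cauchy-Schwarz combined with $\|\bv_h - \boldsymbol{\Pi}_0\bv_h\|_{0,\O} \lesssim h \|\bv_h\|_{1,\O}$ yields $h \|\bb{f}\|_{0,\O} \|\bv_h\|_{1,\O}$, which is dominated by $h^{\sigma^{*}}\|\bb{f}\|_{0,\O}\|\bv_h\|_{1,\O}$ since $\sigma^{*} \leq 1$.

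The main obstacle is the bookkeeping of the convective consistency term, as in the argument leading to \eqref{eq5}. The adjoint form $\widehat{a}^{\bb{\beta}}$ differs from its primal analogue by the exchange of its arguments, but thanks to the antisymmetrization \eqref{eq:convecdisc} the consistency error $\widehat{a}_h^{\bb{\beta},\E} - \widehat{a}^{\bb{\beta},\E}$ is symmetric in its two entries, so the same manipulation applies verbatim after interchanging the roles of $\widehat{\bu}_I^{*}$ and $\bv_h$. Summing over $K \in \CT_h$, gathering the $h^{s^{*}}$, $h^{\sigma^{*}}$ and $h$ contributions, and absorbing constants into the same $\bb{C}$ introduced in \eqref{eq:constant1} produces the claimed estimate.
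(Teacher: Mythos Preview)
Your proposal is correct and follows the same route the paper intends: the paper does not give a separate proof of this lemma but simply remarks that ``similar arguments can be used'' as for Lemma~\ref{eq:duality2}, whose proof in turn is declared to repeat that of Lemma~\ref{lmm:conv1} with source $\bb{f}\in\L^2(\O,\mathbb{C})^2$. Your sketch spells out exactly this adaptation for the dual operators, so it matches the paper's approach.
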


Finally, we conclude this section with error estimates for primal and dual eigenfunctions.

\begin{lemma}
Let $\bu_h$ be an eigenfunction of $\bT_{h}$ associated with the eigenvalue $\kappa_{h}^{(i)}$, $1\leq i \leq m$, with $\|\bu_{h}\|_{0,\O} = 1$. Then, there exists an eigenfunction $\bu \in \L^{2}(\O,\mathbb{C})^2$ of $\bT$ associated to the eigenvalue $\kappa$ such that
\begin{equation*}
\|\bu-\bu_{h}\|_{0,\O} \leq \bb{C}h^{\widetilde{r}+\sigma},
\end{equation*}
where $\bb{C}>0$ is independent of $h$.
\end{lemma}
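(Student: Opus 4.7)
The plan is to deploy the Babu\v{s}ka--Osborn spectral convergence theory \cite{BO} in the $\L^2$ setting and then sharpen the resulting rate via the duality estimate of Lemma \ref{lmm:duality1}.

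First, I would regard $\widetilde{\bT}$ and $\bT_h$ as compact operators on $\L^2(\O,\mathbb{C})^2$. Lemma \ref{eq:duality2} yields $\bT_h\to\widetilde{\bT}$ in the $\L^2$-operator norm with rate $h^{\sigma}$, and the analogous statement for the adjoints holds by Lemma \ref{eq:duality3}. Consequently, $\bb{E}_h\to\bb{E}$ in the $\L^2$-operator norm and, for $h$ sufficiently small, $\dim R(\bb{E}_h)=\dim R(\bb{E})=m$.

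Second, following the classical argument of \cite[Thm.~7.1]{BO}, given a normalized eigenfunction $\bu_h\in R(\bb{E}_h)$ I would take $\bu:=\bb{E}\bu_h/\|\bb{E}\bu_h\|_{0,\O}$, which (under the standing semi-simplicity of $\kappa$) is an eigenfunction of $\bT$ associated with $\kappa$. Since $\bb{E}_h\bu_h=\bu_h$, the resolvent identity gives $\bu_h-\bb{E}\bu_h=(\bb{E}_h-\bb{E})\bu_h$, and the usual contour manipulation produces
\begin{equation*}
\|\bu-\bu_h\|_{0,\O}\le C\,\bigl\|(\widetilde{\bT}-\bT_h)\big|_{R(\bb{E})}\bigr\|_{\mathcal{L}(\L^2)}.
\end{equation*}

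Third, this is where the additional factor $h^{\widetilde{r}}$ enters: any $\bb{f}\in R(\bb{E})$ inherits the $H^{1+r}$ regularity of eigenfunctions of $\bT$, so $\bb{f}\in \H_0^1(\O,\mathbb{C})^2$; moreover, since $R(\bb{E})$ is finite-dimensional, the $\|\cdot\|_{1,\O}$ and $\|\cdot\|_{0,\O}$ norms on it are equivalent. Hence Lemma \ref{lmm:duality1} applies and yields
\begin{equation*}
\|(\widetilde{\bT}-\bT_h)\bb{f}\|_{0,\O}\le \bb{\mathtt{C}}\,h^{\widetilde{r}+\sigma}\|\bb{f}\|_{1,\O}\le C_\ast\,h^{\widetilde{r}+\sigma}\|\bb{f}\|_{0,\O},
\end{equation*}
which, combined with the Babu\v{s}ka--Osborn bound of the previous step, delivers the desired estimate $\|\bu-\bu_h\|_{0,\O}\le \bb{C}\,h^{\widetilde{r}+\sigma}$.

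The main obstacle is the clean invocation of the spectral approximation theory in the non-selfadjoint framework: one has to verify semi-simplicity so that $R(\bb{E})$ contains only genuine eigenfunctions, and track how the Babu\v{s}ka--Osborn constant $C$ depends on the (discrete and continuous) dual bases and on the norm of the spectral projector $\bb{E}$. All the ingredients needed to control these quantities, namely Theorem \ref{thm:errors1} together with Lemmas \ref{eq:duality2}--\ref{eq:duality3}, are already in place, so this step is essentially bookkeeping rather than new analysis.
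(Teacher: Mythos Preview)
Your approach is essentially the same as the paper's: invoke the Babu\v{s}ka--Osborn theory in $\L^2$ via Lemma \ref{eq:duality2}, reduce the eigenfunction error to $\|(\widetilde{\bT}-\bT_h)|_{R(\bb{E})}\|_{\mathcal{L}(\L^2)}$, and then apply Lemma \ref{lmm:duality1} on the finite-dimensional eigenspace to obtain the rate $h^{\widetilde{r}+\sigma}$. The paper's proof is terser---it simply cites \cite[Theorem~7.1]{BO} and identifies $\widetilde{\bb f}\in R(\widetilde{\bb{E}})$ with $\bb f\in R(\bb{E})$---whereas you spell out the explicit choice $\bu=\bb{E}\bu_h/\|\bb{E}\bu_h\|_{0,\O}$, the norm equivalence on $R(\bb{E})$, and the semi-simplicity caveat, all of which the paper leaves implicit.
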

\begin{proof}
Applying Lemma \ref{eq:duality2} and \cite[Theorem 7.1]{BO}, we have spectral convergence of $\bT_{h}$ to $\widetilde{\bT}$. Now, due to the relation between the eigenfunctions of $\bT$ and $\bT_{h}$ with those of $\widetilde{\bT}$ and $\bT_{h}$, we have $\bu_{h} \in R(\bb{E}_h)$ and there exists $\bu \in R(\bb{E})$ such that 
\begin{equation*}
\|\bu-\bu_{h}\|_{0,\O} \lesssim \underset{\widetilde{\bb{f}} \in R(\widetilde{\bb{E}}_h) : \|\widetilde{\bb{f}}\|_{0,\O}=1}{\sup} \|(\widetilde{\bT}-\bT_h)\widetilde{\bb{f}}\|_{0,\O},
\end{equation*}
where $R(\widetilde{\bb{E}}_h)$ is an eigenspace of $\widetilde{\bT}$. On the other hand, using Lemma \ref{lmm:duality1}, for all $\widetilde{\bb{f}} \in R(\widetilde{\bb{E}})$, if $\bb{f} \in R(\widetilde{\bb{E}})$ is such that $\bb{f} = \widetilde{\bb{f}}$, then 
\begin{equation*}
\|(\widetilde{\bT}-\bT_h)\widetilde{\bb{f}}\|_{0,\O} = \|(\bT-\bT_h)\bb{f}\|_{0,\O} \leq \bb{C}h^{\widetilde{r}+\sigma}.
\end{equation*}
This concludes the proof.
\end{proof}
Observe that the same arguments can be used for the dual problem, obtaining the following result.
{\begin{lemma}
Let $\bu_h^*$ be an eigenfunction of $\bT_{h}^*$ associated with the eigenvalue $\kappa_{h}^{*(i)}$, $1\leq i \leq m$, with $\|\bu_{h}^*\|_{0,\O} = 1$. Then, there exists an eigenfunction $\bu^* \in \L^{2}(\O,\mathbb{C})^2$ of $\bT^*$ associated to the eigenvalue $\kappa^*$ such that
\begin{equation*}
\|\bu^*-\bu_{h}^*\|_{0,\O} \leq \bb{C}h^{\widetilde{r}^*+\sigma^*},
\end{equation*}
where $\bb{C}>0$ is independent of $h$.
\end{lemma}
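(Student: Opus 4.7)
The proposed proof mirrors the preceding primal result almost verbatim, exchanging the roles of $\bT$, $\widetilde{\bT}$, $\bu$ with $\bT^*$, $\widetilde{\bT}^*$, $\bu^*$. The plan is to chain together three ingredients already established in the paper: the $\L^2$-convergence of the discrete dual solution operator to the continuous one on $\L^2(\O,\mathbb{C})^2$ provided by Lemma \ref{eq:duality3}; the classical spectral convergence theory of \cite{BO} applied to the pair $(\widetilde{\bT}^*, \bT_h^*)$; and finally the enhanced $\L^2$-duality estimate for the dual source problem (the dual counterpart of Lemma \ref{lmm:duality1}) to upgrade the convergence rate from $h^{\sigma^*}$ to $h^{\widetilde{r}^*+\sigma^*}$.

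First I would invoke Lemma \ref{eq:duality3}, which asserts that $\|(\widetilde{\bT}^*-\bT_h^*)\bb{f}\|_{0,\O}\lesssim h^{\sigma^*}\|\bb{f}\|_{0,\O}$ for every $\bb{f}\in\L^2(\O,\mathbb{C})^2$. This gives convergence in norm of $\bT_h^*$ to $\widetilde{\bT}^*$ as $h\downarrow 0$, and since $\widetilde{\bT}^*$ is compact, \cite[Theorem 7.1]{BO} applies: the spectrum of $\bT_h^*$ converges to that of $\widetilde{\bT}^*$, and, for $h$ small enough, the discrete eigenspace associated with $\kappa_h^{*(i)}$ sits inside a neighborhood of the continuous eigenspace $R(\bb{E}^*)$ of $\widetilde{\bT}^*$ associated to $\kappa^*$. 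Since the nonzero spectra of $\widetilde{\bT}^*$ and $\bT^*$ coincide (with the same generalized eigenspaces, up to identification in $\L^2$), this selects an eigenfunction $\bu^*\in \L^2(\O,\mathbb{C})^2$ of $\bT^*$.

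Next, for the rate, \cite[Theorem 7.1]{BO} also yields the abstract gap bound
\begin{equation*}
\|\bu^*-\bu_h^*\|_{0,\O}\lesssim \sup_{\widetilde{\bb{f}}\in R(\widetilde{\bb{E}}^*),\,\|\widetilde{\bb{f}}\|_{0,\O}=1}\|(\widetilde{\bT}^*-\bT_h^*)\widetilde{\bb{f}}\|_{0,\O}.
\end{equation*}
Because elements of $R(\widetilde{\bb{E}}^*)$ are smooth enough to lie in $\H_0^1(\O,\mathbb{C})^2$ (generalized dual eigenfunctions inherit the regularity from Theorem \ref{th:regularidadfuente_dual} iterated finitely many times), for such $\widetilde{\bb{f}}$ we may identify it with an element $\bb{f}\in R(\bb{E}^*)$ and apply the dual version of Lemma \ref{lmm:duality1} (which is stated as the unlabeled lemma immediately after Lemma \ref{lmm:duality1}) to obtain
\begin{equation*}
\|(\widetilde{\bT}^*-\bT_h^*)\widetilde{\bb{f}}\|_{0,\O}=\|(\bT^*-\bT_h^*)\bb{f}\|_{0,\O}\leq \bb{C}h^{\widetilde{r}^*+\sigma^*}\|\bb{f}\|_{1,\O},
\end{equation*}
with $\widetilde{r}^*:=\min\{1,r^*\}$, $r^*$ being the additional regularity exponent for the dual auxiliary problem. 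Combining the two displays yields the claimed bound with a constant $\bb{C}>0$ independent of $h$.

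I do not anticipate a serious obstacle: all heavy lifting (the $\L^2$-duality trick for the dual source problem, the convergence in norm on $\L^2$, and the compactness of $\widetilde{\bT}^*$) has already been done in Lemmas \ref{lmm:duality1} and \ref{eq:duality3} together with its dual analogue. The only mild subtlety is to justify that elements of $R(\widetilde{\bb{E}}^*)$ may be used as right-hand sides in the enhanced dual estimate, i.e.\ that they belong to $\H_0^1(\O,\mathbb{C})^2$; this is automatic from the regularity shift in Theorem \ref{th:regularidadfuente_dual} applied to the eigenvalue equation $\bT^*\bu^*=\kappa^*\bu^*$ (a standard bootstrap using that $\bu^*\in \L^2$ implies $\bT^*\bu^*\in\H^{1+s^*}$, hence $\bu^*=(\kappa^*)^{-1}\bT^*\bu^*$ is in $\H^1_0$). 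Since the proof is structurally identical to the primal case, it is legitimate to skip the calculations, as the authors already do.
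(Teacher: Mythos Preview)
Your proposal is correct and follows exactly the approach the paper intends: the paper explicitly states that ``the same arguments can be used for the dual problem'' and provides no separate proof, so mirroring the primal argument (Lemma \ref{eq:duality3} plus \cite[Theorem 7.1]{BO} plus the dual analogue of Lemma \ref{lmm:duality1}) is precisely what is expected. Your added remark about the bootstrap regularity of elements in $R(\widetilde{\bb{E}}^*)$ is a welcome clarification that the paper omits.
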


\section{Numerical experiments}
\label{sec:numerical-experiments}
We conclude our paper reporting some numerical tests to illustrate the performance of our method. The implementation of the method has been developed
  in MATLAB. The goal is to assess the performance of the method on different domains with polygonal meshes and study the presence of spurious eigenvalues. After computing the eigenvalues, the rates of convergence  are calculated by using a least-square fitting. More precisely, if $\lambda_h$ is a discrete complex eigenvalue, then the rate of convergence $\alpha$ is calculated by extrapolation with the least square  fitting
$$
\lambda_{h}\approx \lambda_{\text{extr}} + Ch^{\alpha},
$$
where $\lambda_{\text{extr}}$ is the extrapolated eigenvalue given by the fitting. Also, $\mathrm{N}$ represents the number of polygons in the bottom of the square.

For the tests we consider the following families of polygonal meshes which satisfy the assumptions  \textbf{A1} and  \textbf{A2}  (see Figure \ref{fig:mesh}):
\begin{itemize}
\item $\CT_h^{1}$: distorted squares meshes;
\item $\CT_h^2$: Voronoi meshes;
\item $\CT_h^3$: trapezoidal meshes;
\item $\CT_h^4$: polygons with middle points;
\item $\CT_h^5$: triangle mesh with middle points.
\end{itemize}
\begin{figure}[h!]
\begin{center}
\centering\includegraphics[height=4.2cm, width=4.2cm]{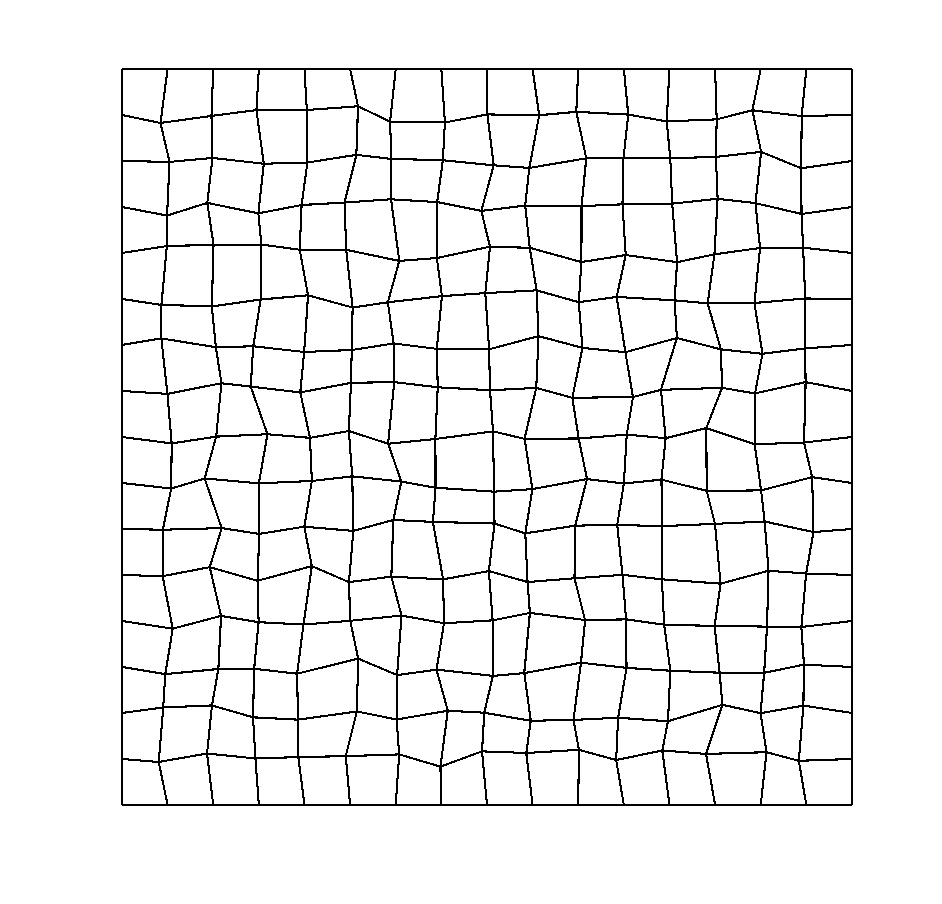} 
\centering\includegraphics[height=4.2cm, width=4.2cm]{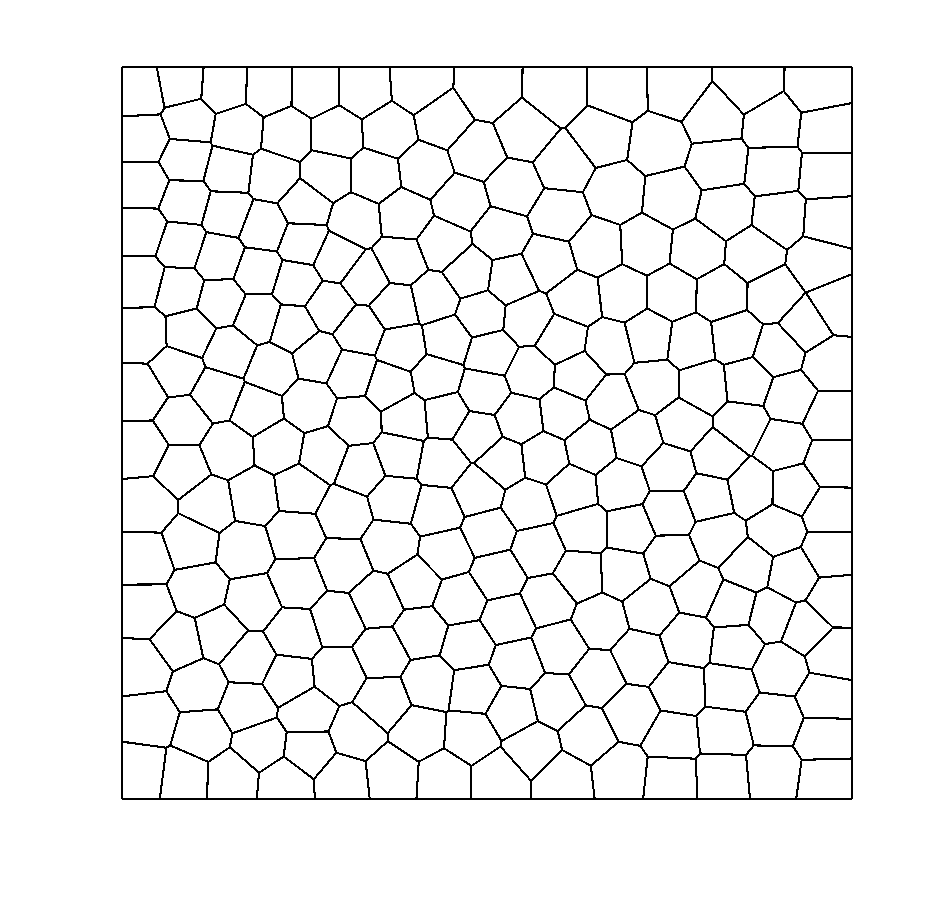}
\centering\includegraphics[height=4.2cm, width=4.2cm]{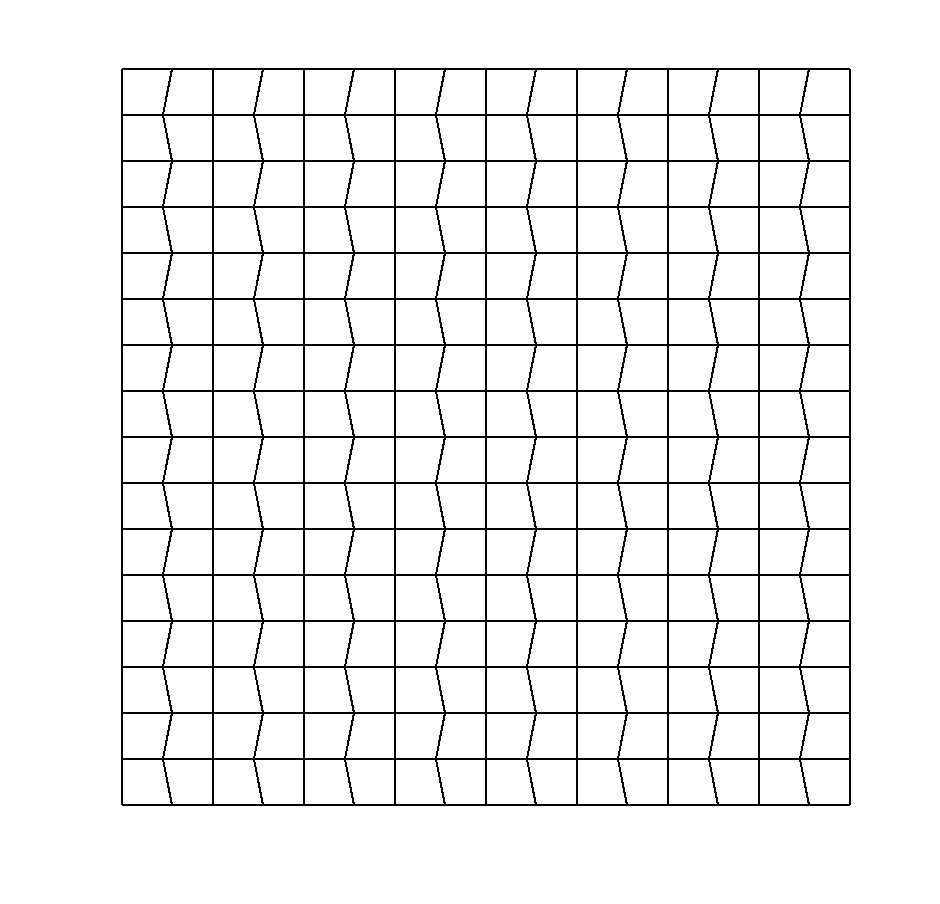}\\
\centering\includegraphics[height=4.2cm, width=4.2cm]{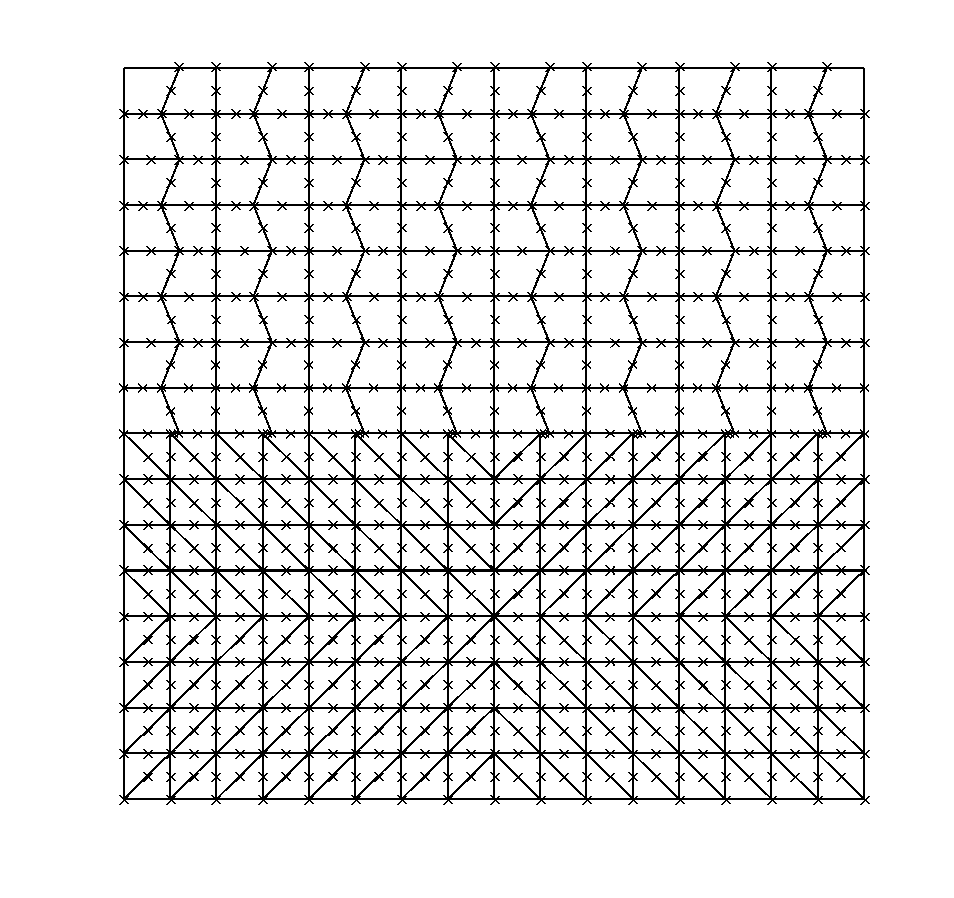}
\centering\includegraphics[height=4.2cm, width=4.2cm]{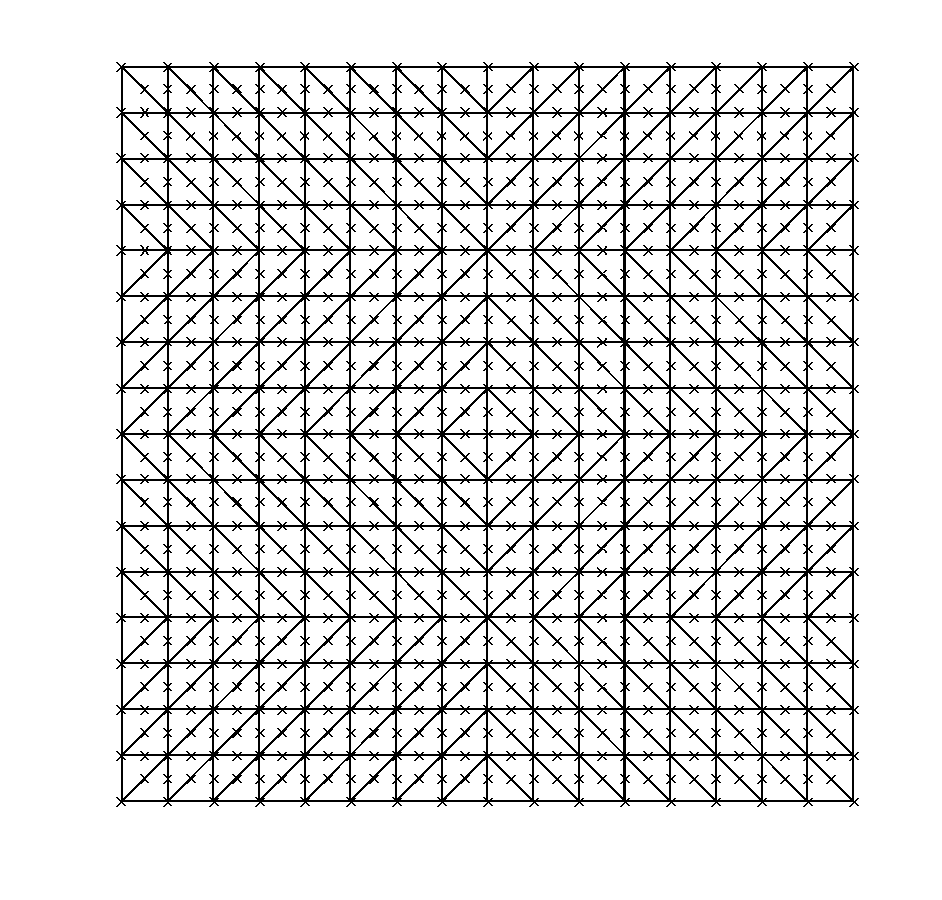}
\caption{\label{fig:mesh} Sample meshes: $\CT_h^1$ (top left), $\CT_h^2$ (top center), $\CT_h^3$ (top right), $\CT_h^4$ (bottom left), $\CT_h^5$ (bottom right).}
\end{center}
\end{figure}
%

\subsection{Test 1: Convex domain}

In this test, we consider the convex domain $\O = (-1,1)^{2}$ with zero boundary conditions, and $\beta=(1,0)^{t}$. Observe that due to the convexity of this domain, the convergence rates are the optimal as we expect. The meshes considered for this test are $\CT_h^{1}$, $\CT_h^{2}$ and $\CT_h^{3}$. In tables \ref{tabla1},\ref{tabla2} and \ref{tabla3} we present the first four approximated frequencies and their orders of convergence on these three different meshes.

\begin{table}[H]
\caption{Four lowest approximated frequencies, orders of convergence and extrapolated frequencies computed with $\mathcal{T}_{h}^{1}$.}
\label{tabla1}
\begin{center}
\resizebox{13cm}{!}{
\begin{tabular}{|c|c|c|c|c|c|c|c|} \hline
$\omega_{hi}$ & $\mathrm{N}$=16 & $\mathrm{N}$=32 & $\mathrm{N}$=64 & $\mathrm{N}$=128 & Order & Exact. \\ \hline 
$\omega_{h1}$ & 13.92216 & 13.68985 & 13.62982 & 13.61468 & 1.96 & 13.60931 \\
$\omega_{h2}$ & 24.08035 & 23.37254 & 23.19098 & 23.14515 & 1.97 & 23.12934 \\
$\omega_{h3}$ & 24.33081 & 23.64147 & 23.47925 & 23.43710 & 2.07 & 23.42628 \\
$\omega_{h4}$ & 34.88566 & 32.93985 & 32.46160 & 32.33912 & 2.02 & 32.30257 \\
\hline
\end{tabular}}
\end{center}
\end{table}

\begin{table}[H]
\caption{Four lowest approximated frequencies, orders of convergence and extrapolated frequencies  computed with $\mathcal{T}_{h}^{2}$.}
\label{tabla2}
\begin{center}
\resizebox{13cm}{!}{
\begin{tabular}{|c|c|c|c|c|c|c|c|} \hline
$\omega_{hi}$ & $\mathrm{N}$=14 & $\mathrm{N}$=30 & $\mathrm{N}$=66 & $\mathrm{N}$=121 & Order & Exact. \\ \hline 
$\omega_{h1}$ & 13.75551 & 13.63935 & 13.61705 & 13.61130 & 2.11 & 13.61056 \\
$\omega_{h2}$ & 23.61476 & 23.24318 & 23.15738 & 23.13621 & 1.90 & 23.12977 \\
$\omega_{h3}$ & 23.91363 & 23.53488 & 23.44976 & 23.42958 & 1.94 & 23.42363 \\
$\omega_{h4}$ & 33.33498 & 32.54061 & 32.35652 & 32.31206 & 1.90 & 32.29795 \\
\hline
\end{tabular}}
\end{center}
\end{table}

\begin{table}[H]
\caption{Four lowest approximated frequencies, orders of convergence and extrapolated frequencies  computed with $\mathcal{T}_{h}^{3}$.}
\label{tabla3}
\begin{center}
\resizebox{13cm}{!}{
\begin{tabular}{|c|c|c|c|c|c|c|c|} \hline
$\omega_{hi}$ & $\mathrm{N}$=16 & $\mathrm{N}$=32 & $\mathrm{N}$=64 & $\mathrm{N}$=128 & Order & Exact. \\ \hline 
$\omega_{h1}$ & 13.90229 & 13.68293 & 13.62793 & 13.61418 & 2.00 & 13.60966 \\
$\omega_{h2}$ & 24.02186 & 23.35522 & 23.18628 & 23.14389 & 1.98 & 23.12908 \\
$\omega_{h3}$ & 24.19279 & 23.61418 & 23.47066 & 23.43489 & 2.01 & 23.42314 \\
$\omega_{h4}$ & 34.67160 & 32.90207 & 32.44979 & 32.33609 & 1.97 & 32.29576 \\
\hline
\end{tabular}}
\end{center}
\end{table}

We observe from Tables \ref{tabla1}, \ref{tabla2} and \ref{tabla3} that the computed order of convergence matches with theoretical order of convergence, and independent of the mesh type.
 It is clear that the convexity of the domain together with the null Dirichlet boundary condition lead to smooth eigenfunctions and hence, the quadratic order of convergence of the method.  For sake of completeness, in Figure \ref{fig:eigs1} we present plots of first three eigenfunctions, 
magnitude of the velocity fields and the associated pressures.

\begin{figure}[h!]
\begin{center}
\centering\includegraphics[height=4.5cm, width=5.0cm]{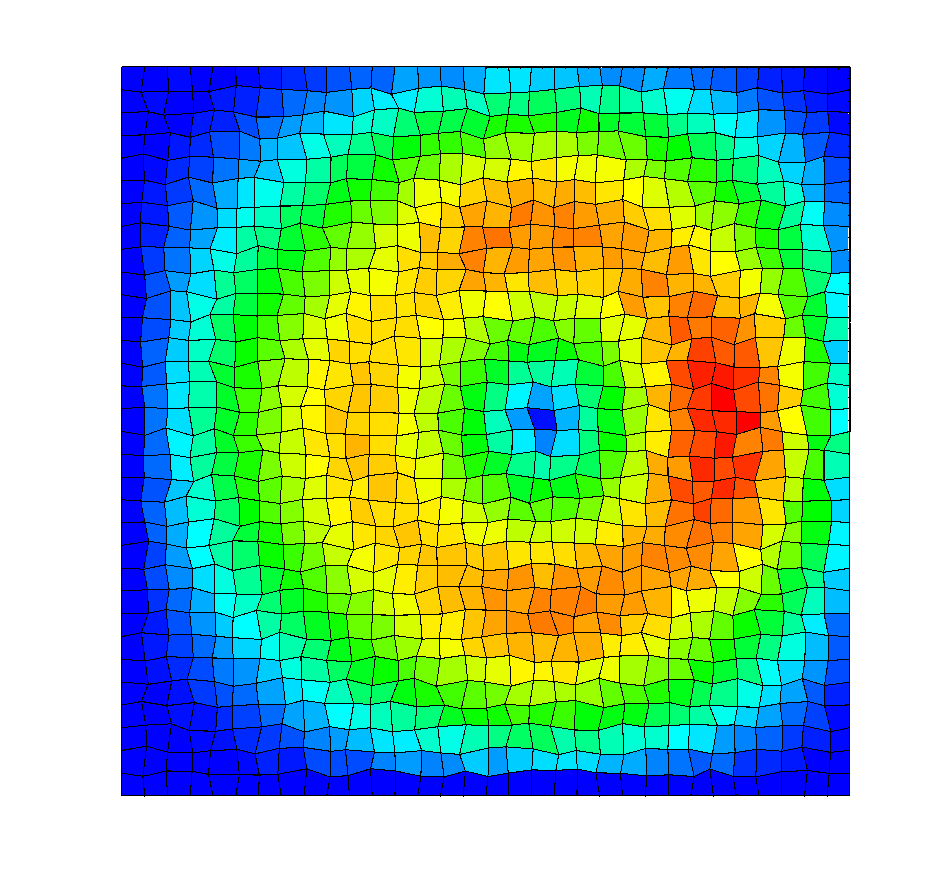} 
\centering\includegraphics[height=4.5cm, width=5.0cm]{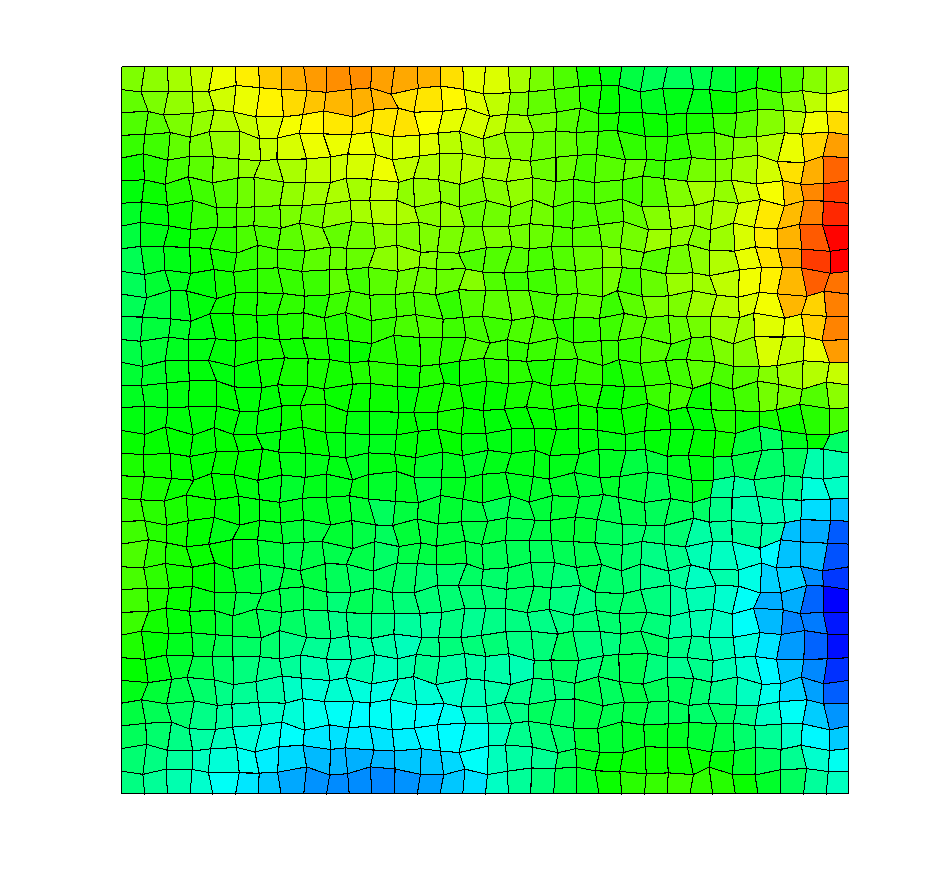}\\
\centering\includegraphics[height=4.5cm, width=5.0cm]{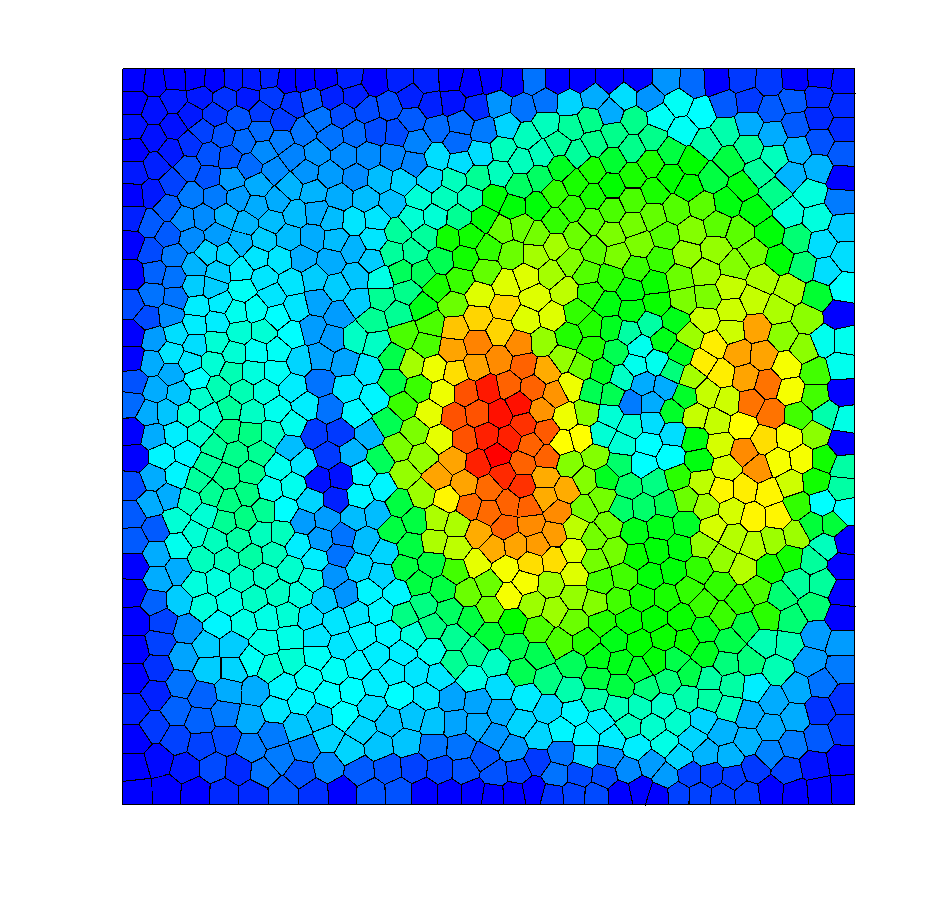}
\centering\includegraphics[height=4.5cm, width=5.0cm]{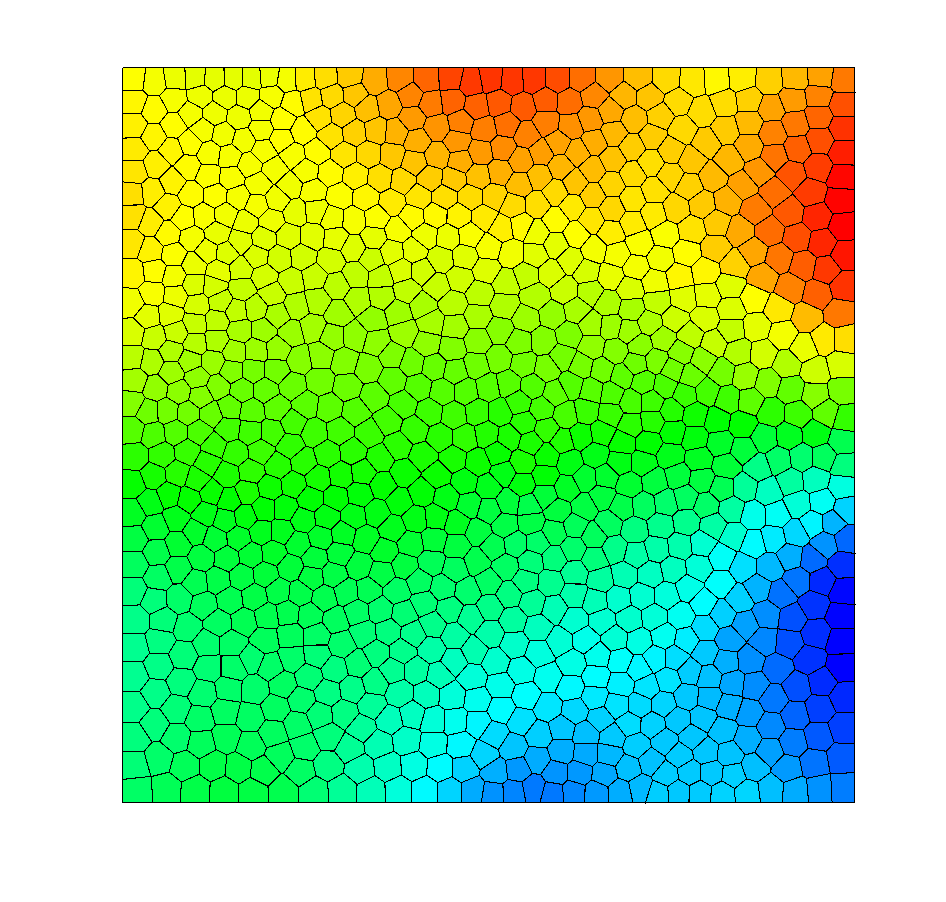}\\
\centering\includegraphics[height=4.5cm, width=5.0cm]{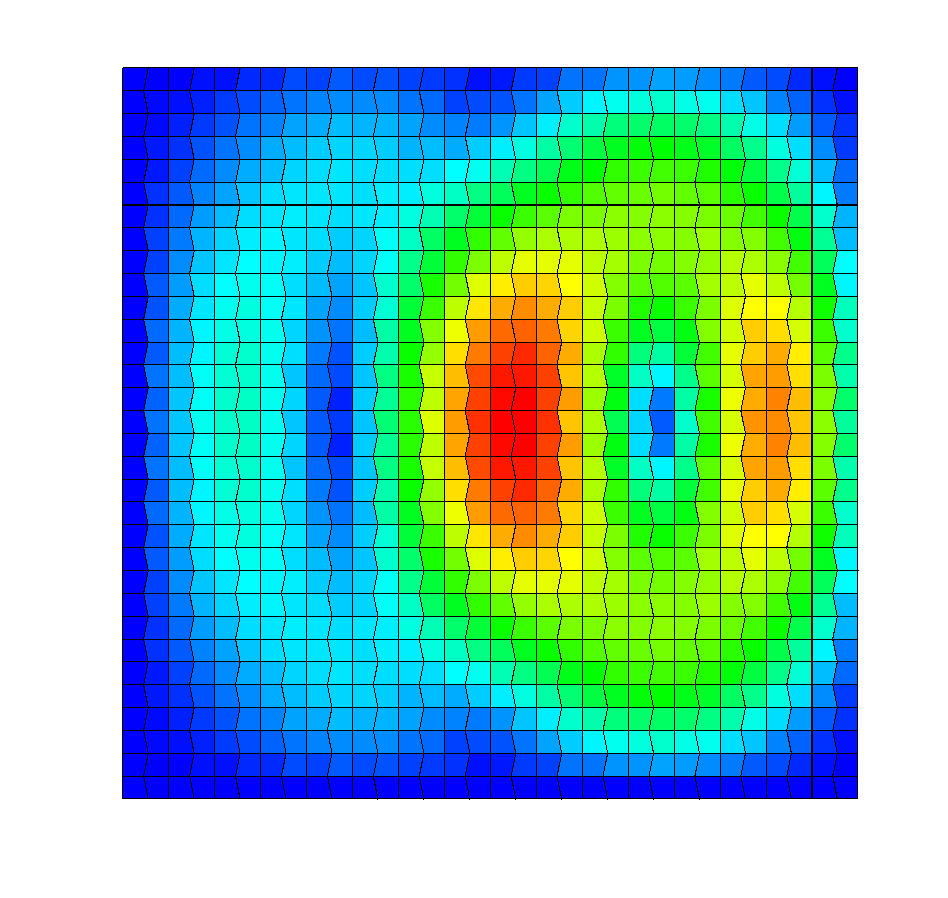}
\centering\includegraphics[height=4.5cm, width=5.0cm]{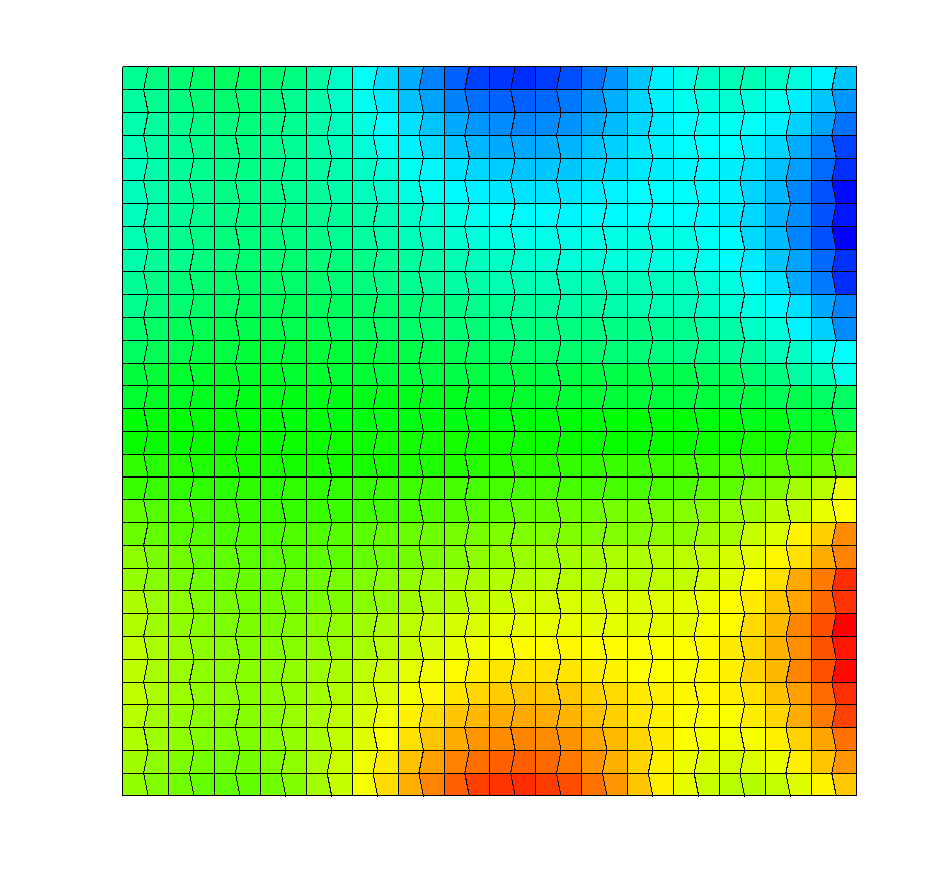}
\caption{\label{fig:eigs1} First, second and third magnitude of the eigenfunctions and their associated pressures. First column: $\bu_{h1}$, $\bu_{h2}$ and $\bu_{h3}$. Second column: $p_{h1}$, $p_{h2}$ and $p_{h3}$.}
\end{center}
\end{figure}

 \subsection{L-shaped Domain} 
The domain for this test is $\O=(-1,1)\times (-1,1)\setminus (-1,0)\times (-1,0)$
 and the only boundary condition is $\bu = \0$. It is clear that for the L-shaped domain, the re-entrant angle leads to a lack of regularity for some eigenfunctions, as has been studied in \cite{MR2473688} for the Stokes eigenvalue problem, the convergence rates of the errors for the eigenvalues, vary between $1.7 \leq r \leq2$, depending on the regularity of the corresponding eigenfunction. We expect that for the Oseen eigenvalue problem the results must be similar. For this test, we consider the mesh $\CT_h^{6}$ presented in Figure \ref{fig:Ldomain}.
 
\begin{figure}[h!]
\begin{center}
\centering\includegraphics[height=4.2cm, width=4.2cm]{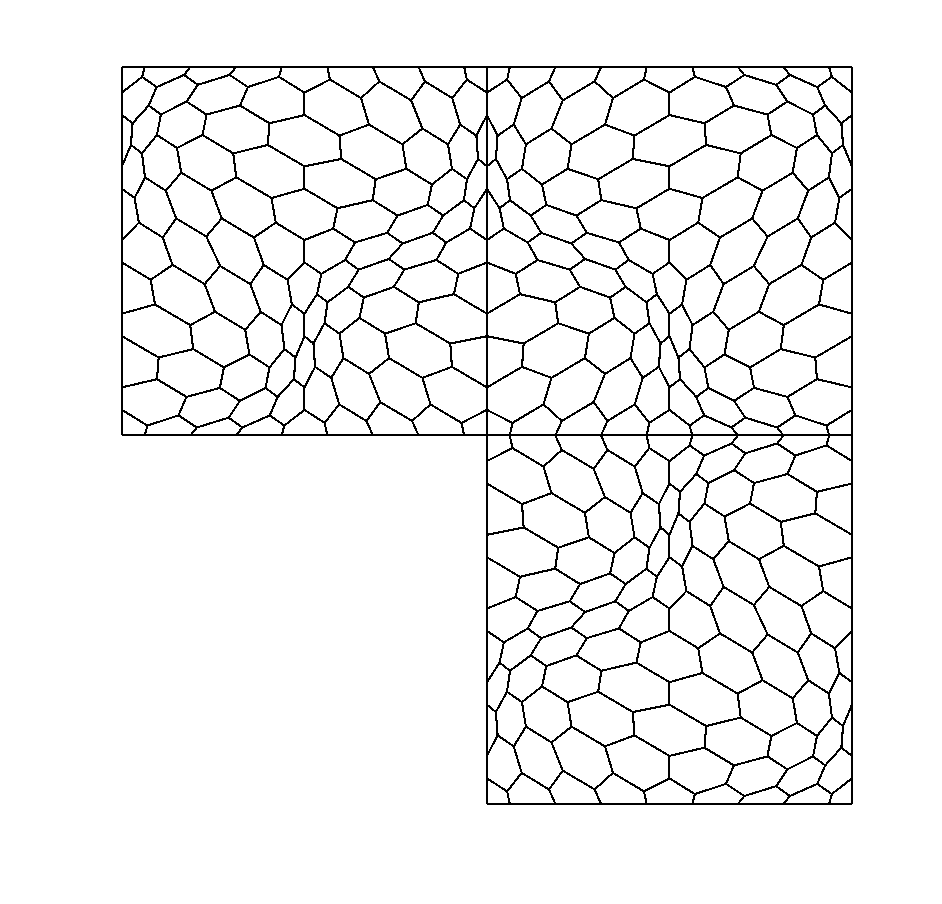} 
\caption{\label{fig:Ldomain} Sample of mesh $\CT_h^{6}$.}
\end{center}
\end{figure} 

\begin{table}[H]
\caption{Four lowest approximated frequencies, orders of convergence, and extrapolated frequencies  computed with $\mathcal{T}_{h}^{6}$.}
\label{tabla4}
\begin{center}
\resizebox{13cm}{!}{
\begin{tabular}{|c|c|c|c|c|c|c|c|} \hline
$\omega_{hi}$ & $\mathrm{N}$=38 & $\mathrm{N}$=54 & $\mathrm{N}$=70 & $\mathrm{N}$=90 & Order & Exact. \\ \hline 
$\omega_{h1}$ & 33.48850 & 33.26270 & 33.16583 & 33.10531 & 1.74 & 32.99534 \\
$\omega_{h2}$ & 37.47176 & 37.28385 & 37.21317 & 37.17461 & 2.19 & 37.12148 \\
$\omega_{h3}$ & 42.80303 & 42.59293 & 42.51336 & 42.47086 & 2.20 & 42.41193 \\
$\omega_{h4}$ & 49.79904 & 49.51086 & 49.40294 & 49.34772 & 2.26 & 49.27180 \\
\hline
\end{tabular}}
\end{center}
\end{table}

As we expect, for the first eigenvalue we attained a convergence order $1.74$ since the associated eigenfunction is singular where the re-entrant angle is located, whereas the rest of the computed eigenvalues converge with order two since the associated eigenfunctions are smooth enough. This test also confirms that the regularity of the eigenfunctions on non-convex domains behave as the Stokes eigenfunctions. In Figure \ref{fig_Lshapes} we present plots of the magnitude of the velocity and the pressure fluctuation for the first three eigenfunctions. 
\begin{figure}[h!]
\label{fig_Lshapes}
\begin{center}
\centering\includegraphics[height=4.5cm, width=5.0cm]{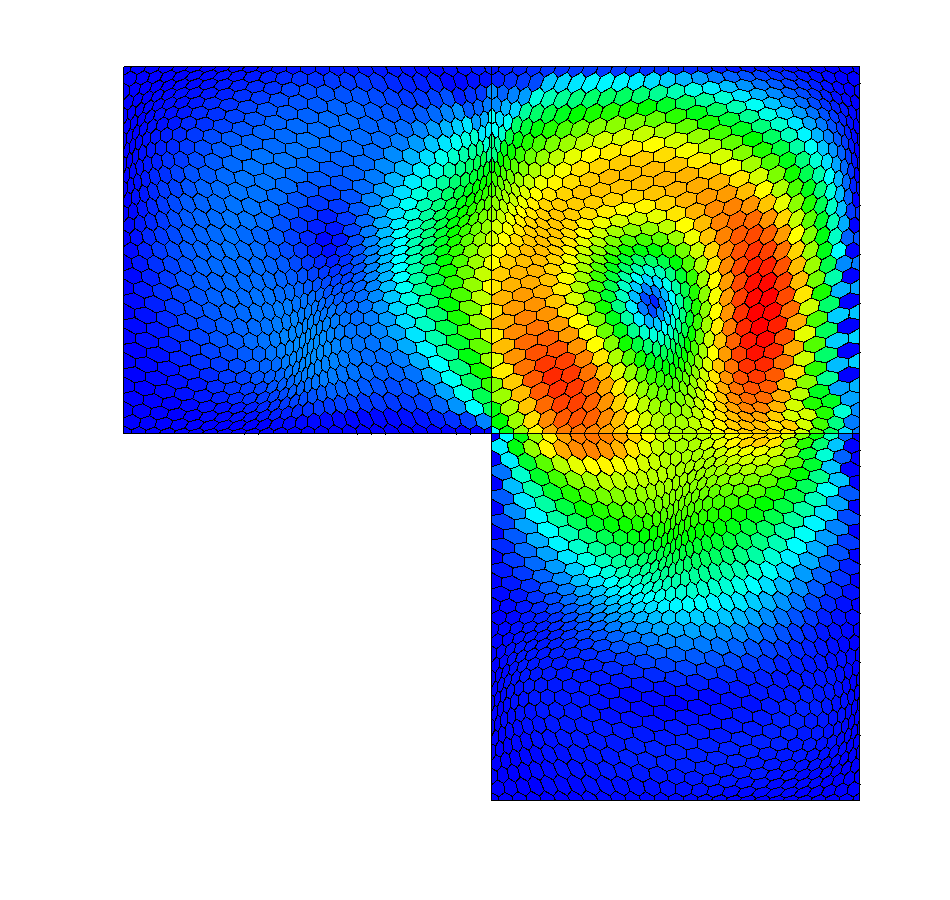} 
\centering\includegraphics[height=4.5cm, width=5.0cm]{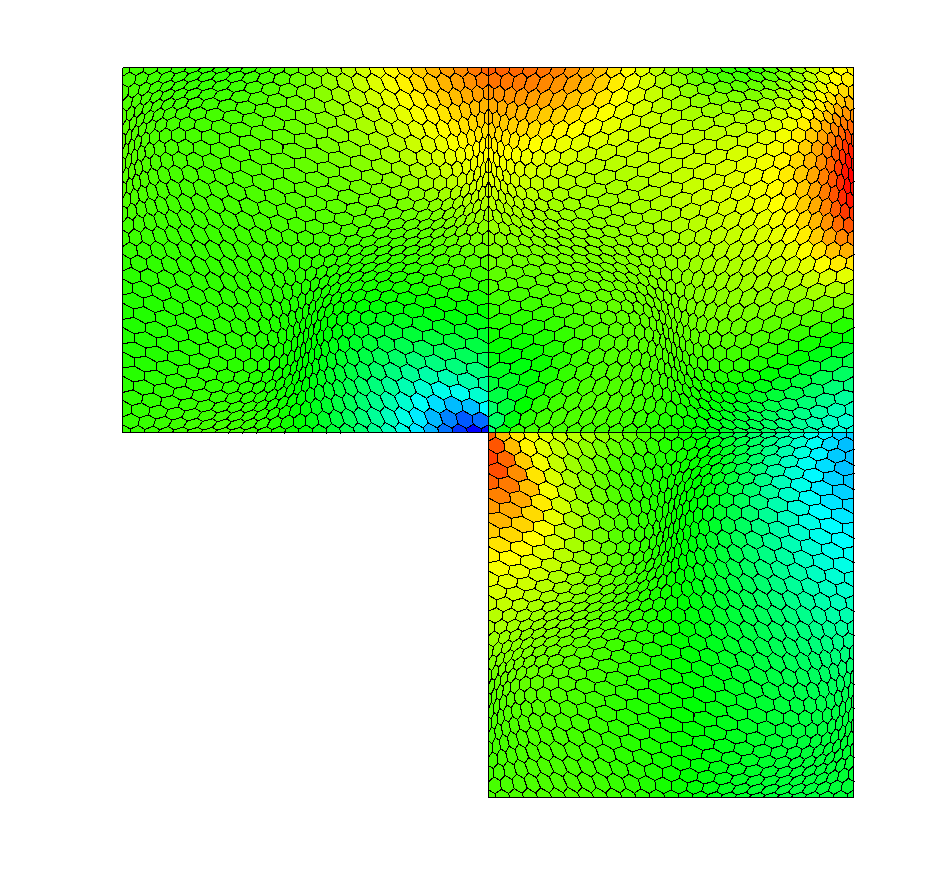} \\
\centering\includegraphics[height=4.5cm, width=5.0cm]{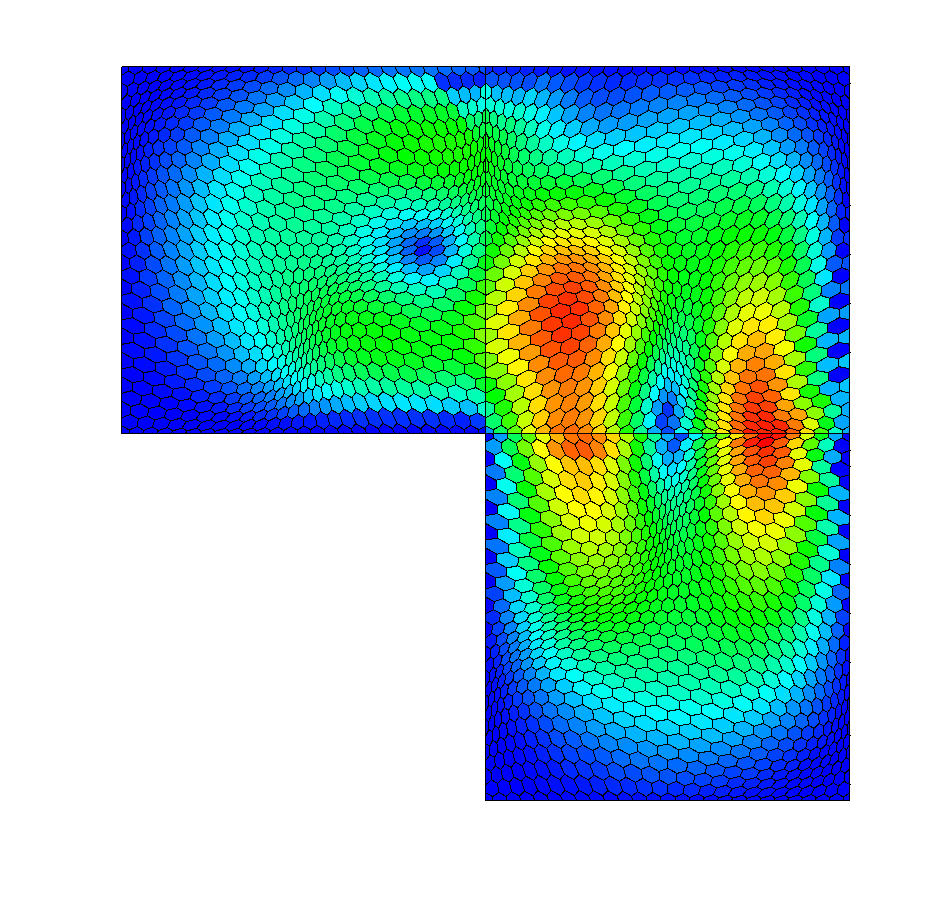} 
\centering\includegraphics[height=4.5cm, width=5.0cm]{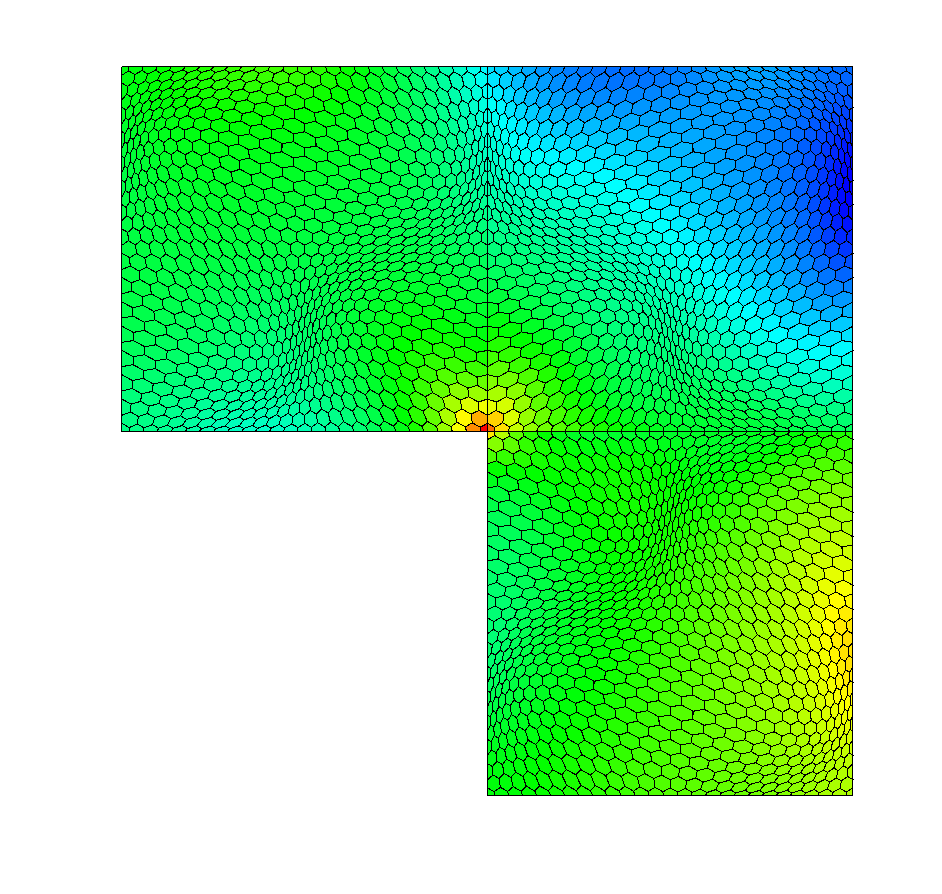} \\
\centering\includegraphics[height=4.5cm, width=5.0cm]{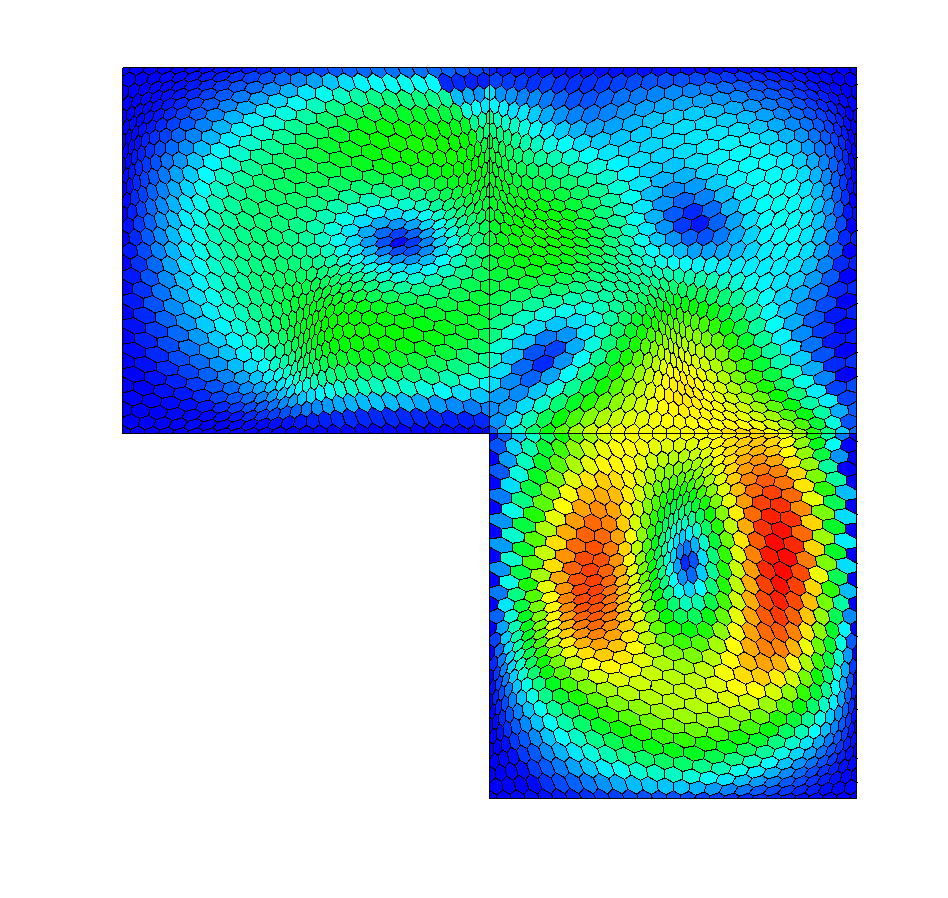} 
\centering\includegraphics[height=4.5cm, width=5.0cm]{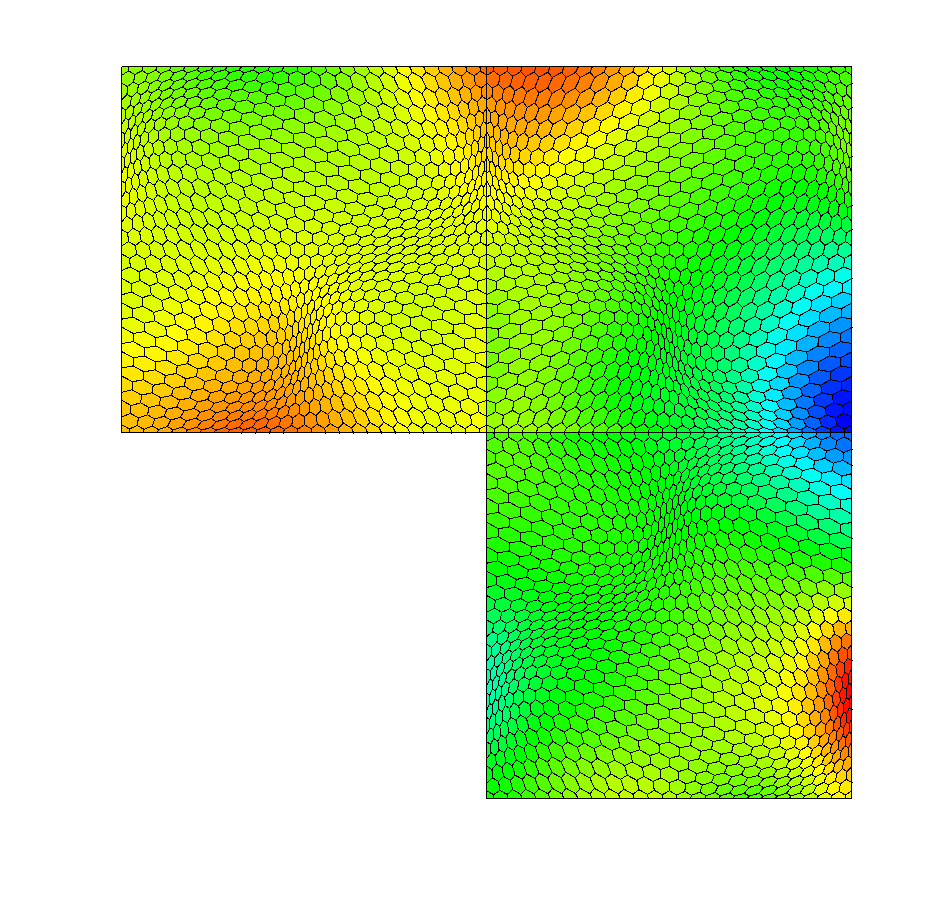}
\caption{\label{fig:eigs2} First, second and third magnitude of the eigenfunctions and their associated pressures for the L-shaped domain. First column: $\bu_{h1}$, $\bu_{h2}$ and $\bu_{h3}$. Second column: $p_{h1}$, $p_{h2}$ and $p_{h3}$.}
\end{center}
\end{figure}

\subsection{Effects of the stabilization in the computed spectrum}
The goal of this test is to observe the performance of the method with respect to the stabilization parameters, inherent on the VEM, in the context of the appearance of possible spurious eigenvalues. Let us remark that this type of analysis has been also considered on other problems and methods that depend on stabilizations,  such as \cite{adak2022vem,MR3962898,Lepe2021,MR4050542}. For this experiment, we consider the following problem: Given $\Omega \subset \mathbb{R}^{2}$ with $\partial\Omega = \Gamma_N\,\cup\,\Gamma_D$, where $|\Gamma_D|>0$, find the velocity $\bu$ and pressure $p$ such that
\begin{equation}\label{def:oseen-eigenvalue_mixto}
\left\{
\begin{array}{rcll}
-\nu\Delta \bu + (\boldsymbol{\beta}\cdot\nabla)\bu + \nabla p&=&\lambda\bu,&\text{in}\,\O,\\
\div \bu&=&0,&\text{in}\,\O,\\
\bu &=&\boldsymbol{0},&\text{on}\,\partial\O,\\
(\nabla \bu - p\boldsymbol{I})\boldsymbol{n} &=&\textbf{0}, &\text{in}\,\partial\O,
\end{array}
\right.
\end{equation}
where $\boldsymbol{I} \in \mathbb{R}^{2\times 2}$ is the identity tensor. Observe that in this case, spurious eigenvalues can be introduced for different choices of stabilization parameter. For this test, we consider $\Omega = (0,1)^{2}$ and $\boldsymbol{\beta}=(1,0)^{t}$.
Let us remark that depending on the configuration of the problem, namely physical parameters, boundary conditions, nature of the meshes, etc., the spurious may appear or not.

In the forthcoming tables we report the computed spectrum with different values for the stabilization parameter. To make matters precise, we have used the standard dofi-dofi stabilization scaled with a parameter $\alpha_E$. The numbers inside boxes represent spurious eigenvalues which we identify by looking the associated eigenfunction which in this case, does not represent a physical vibration mode. To perform the tests we have considered two polygonal meshes ($\mathcal{T}_{h}^{4}$ and $\mathcal{T}_{h}^{5}$) and we have fixed the refinement level on $\mathrm{N}=8$. 

\begin{table}[H]
\label{tablaespureos3}
\caption{Computed frequencies for $\mathcal{T}_{h}^{4}$ and $\mathrm{N}= 8$, for different stabilization parameters.}
\begin{center}
\resizebox{13cm}{!}{
\begin{tabular}{|c|c|c|c|c|c|c|} \hline
$\alpha_{E}=1/32$ &$\alpha_{E}= 1/16$ & $\alpha_{E}=1/4$ & $\alpha_{E}=1$ &$\alpha_{E}= 4$ &$ \alpha_{E}=16$ & $\alpha_{E}=32$  \\ 
\cline{1-7}
2.07098 & 2.29713 & 2.54288 & 2.70590 & 2.85551 & 2.96811 & 3.01781 \\
3.02694 & 3.73371 & 5.36000 & 6.46190 & 7.10930 & 7.73505 & 8.21554\\
\fbox{7.92036} & 10.59601 & 13.67759 & 15.40457 & 16.55426 & 18.50199 & 20.51561 \\
 \fbox{8.43356} & \fbox{12.85456} & 20.23071 & 23.52387 & 25.35965 & 26.61102 & 27.24423 \\
\fbox{8.49612} & 16.48511 & 22.26127 & 27.02276 & 30.97145 & 37.83053 & 44.48394 \\
\fbox{8.53684} & \fbox{16.86904} & 32.11931 & 41.08455 & 46.51663 & 53.71089 & 60.55640 \\
\fbox{8.58618} & \fbox{16.99124} & 37.63607 & 49.27221 & 57.94917 & 76.18968 & 80.24629 \\
\fbox{8.63582} & \fbox{17.08513} & 42.46586 & 59.66181 & 69.01996 & 78.28127 & 100.29973 \\
\fbox{8.68978} & \fbox{17.24043} & \fbox{44.68864} & 63.60100 & 82.35945 & 108.75566 & 130.02175 \\
\fbox{8.70030} & \fbox{17.29395} & \fbox{50.94633} & 74.07543 & 94.21901 & 120.83558 & 136.36453 \\ \hline
\end{tabular}}
\end{center}
\end{table}

\begin{table}[H]
\label{tablaespureos4}
\caption{Computed frequencies for $\mathcal{T}_{h}^{5}$ and $\mathrm{N}= 8$, with different stabilization parameters.}
\begin{center}
\resizebox{13cm}{!}{
\begin{tabular}{|c|c|c|c|c|c|c|} \hline
$\alpha_{E}=1/32$ & $\alpha_{E}=1/16$ & $\alpha_{E}=1/4$ & $\alpha_E=1$ & $\alpha_E=4$ & $\alpha_E=16$ & $\alpha_E=32$ \\ 
\cline{1-7}
 2.48684 & 2.51670 & 2.59365 & 2.71563 & 2.85707 & 2.96714 & 3.01578 \\
3.11397 & 3.77960 & 5.37074 & 6.46026 & 7.09728 & 7.70639 & 8.18150 \\
 \fbox{8.31185} & 12.25624 & 14.26276 & 15.52922 & 16.47288 & 17.78163 & 18.83300 \\
\fbox{8.34868} & \fbox{16.52221} & \fbox{22.36009+0.31349i} & 23.90778 & 25.46364 & 26.66147 & 27.27722 \\
 \fbox{8.42230} & \fbox{16.61311} & \fbox{22.36009-0.31349i} & 27.11003 & 30.25017 & 34.02411 & 37.26804 \\
\fbox{8.43810} & \fbox{16.82133} & 37.69737 & 42.53760 & 45.77237 & 48.96153 & 50.94096 \\
\fbox{8.44880} & \fbox{16.83215} & 39.96292 & 49.27091 & 55.47536 & 64.67996 & 72.34620 \\
 \fbox{8.48054} & \fbox{16.83828} & 46.82712 & 61.77897 & 68.92870 & 74.21710 & 77.21338 \\
\fbox{8.51422} & \fbox{16.93777} & 54.76065 & 66.40818 & 79.81304 & 91.89810 & 99.36315 \\
\fbox{8.52497} & \fbox{16.99193} & 59.18041 & 74.79963 & 90.37610 & 106.71571 & 110.87567 \\ \hline
\end{tabular}}
\end{center}
\end{table}

From Tables \ref{tablaespureos3} and \ref{tablaespureos4} we observe that spurious eigenvalues arises when $\alpha_{E}<1$ for $\CT_h^4$ and $\CT_h^5$. Moreover, the spurious eigenvalues begin to vanish from the spectrum when $\alpha_E>1$. This type of behavior is the expected according to the literature (see \cite{adak2022vem,Lepe2021} for instance). We observe that when $\CT_h^5$ is considered, complex eigenvalues are computed. This gives us the hint that the geometry of the meshes have an influence on the computation of the spectrum.

Now the natural question is related to the relation between the spurious eigenvalues and the refinement of the meshes. To analyze this, the strategy is to consider a certain parameter $\alpha_E$ where spurious eigenvalues appear and start to refine the mesh in order to observe the behavior of these pollution on the spectrum. For this, we consider $\alpha_E=1/16$ and meshes $\CT_h^4$ and $\CT_h^5$.  
\begin{table}[H]
\begin{center}
\caption{\label{tab21} First ten approximated eigenvalues for $\mathcal{T}_{h}^{4}$ and $\mathcal{T}_{h}^{5}$ for $\alpha_E = 1/16$.}
\resizebox{13cm}{!}{\begin{tabular}{|c|c|c|c|c|c|c|c|c|c|} \hline
& \multicolumn{4}{ |c| }{$\mathcal{T}{h}^{1}$} &\multicolumn{4}{ |c| }{$\mathcal{T}{h}^{2}$}\\ \hline
$\l_{ih}$ & $N = 8$ & $N = 16$ & $N = 32$ & $N = 64$ &$N = 8$ & $N = 16$ & $N = 32$ & $N = 64$ \\ \hline \hline 
 $\l_{1h}$ & 2.29713  &  2.44138  &  2.46927    &2.47193 &  2.51670 & 2.49773 & 2.48329 & 2.47541 \\
 $\l_{2h}$ &  3.73371 &   4.53255   & 5.20913    &5.67485 &3.77960 & 4.54705 & 5.21418 & 5.67652 \\
 $\l_{3h}$ &  10.59601&    12.96465  & 14.26285  & 14.89066& 12.25624 & 13.63713 & 14.46537 & 14.94452 \\
 $\l_{4h}$ &  \fbox{12.85456}  & 19.83441 &  21.70281 &  22.11427&\fbox{16.52221} & 20.67849 & 22.28230 & 22.25192 \\
 $\l_{5h}$ & 16.48511 &    20.16112 &  23.06503  & 25.11909& \fbox{16.61311}&  21.88542 & 23.18495 & 25.17128 \\
 $\l_{6h}$ & \fbox{16.86904}  & 31.90500  & 38.92423  & 41.95938& \fbox{16.82133}&  \fbox{35.73238} & 40.49989 & 42.70875 \\
 $\l_{7h}$ &  \fbox{16.99124} &   \fbox{34.33244} &  41.60708 &  45.72429&\fbox{16.83215}&  40.56047 & 43.42275 & 45.87948 \\
 $\l_{8h}$ &  \fbox{17.08513} &   \fbox{41.27676} &  \fbox{48.66486} &  56.27042 &  \fbox{16.83828}&  \fbox{45.00339} & \fbox{49.64642} &  56.82759 \\
 $\l_{9h}$ &   \fbox{17.24043}   & \fbox{42.47989} &  55.42921  & 60.05007 & \fbox{16.93777}&  58.25960 & 61.05236 &61.54236 \\
 $\l_{10h}$ &  \fbox{17.29395} &  \fbox{49.97204}&   65.69181 &  71.00460&   \fbox{16.99193} & 59.26476 & 66.82080 &71.42255\\ \hline
\end{tabular}}
\end{center}
\end{table}

From Table \ref{tab21} we observe  that the pollution on the computed spectrum begins to vanish as the meshes are refined, as we expect. Finally, in Figure \ref{figesp} we present the plots of the first four eigenfunctions associated to system \eqref{def:oseen-eigenvalue_mixto}. 

\begin{figure}[h!]
\begin{center}
\centering\includegraphics[height=4.5cm, width=5.0cm]{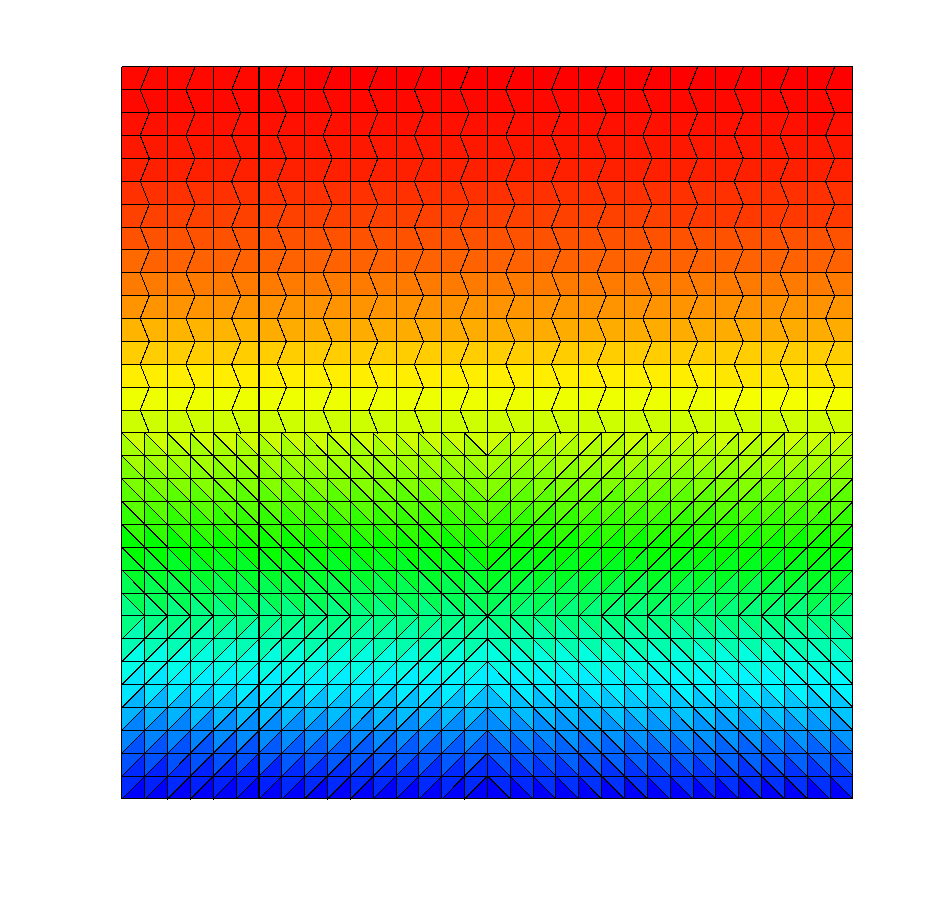} 
\centering\includegraphics[height=4.5cm, width=5.0cm]{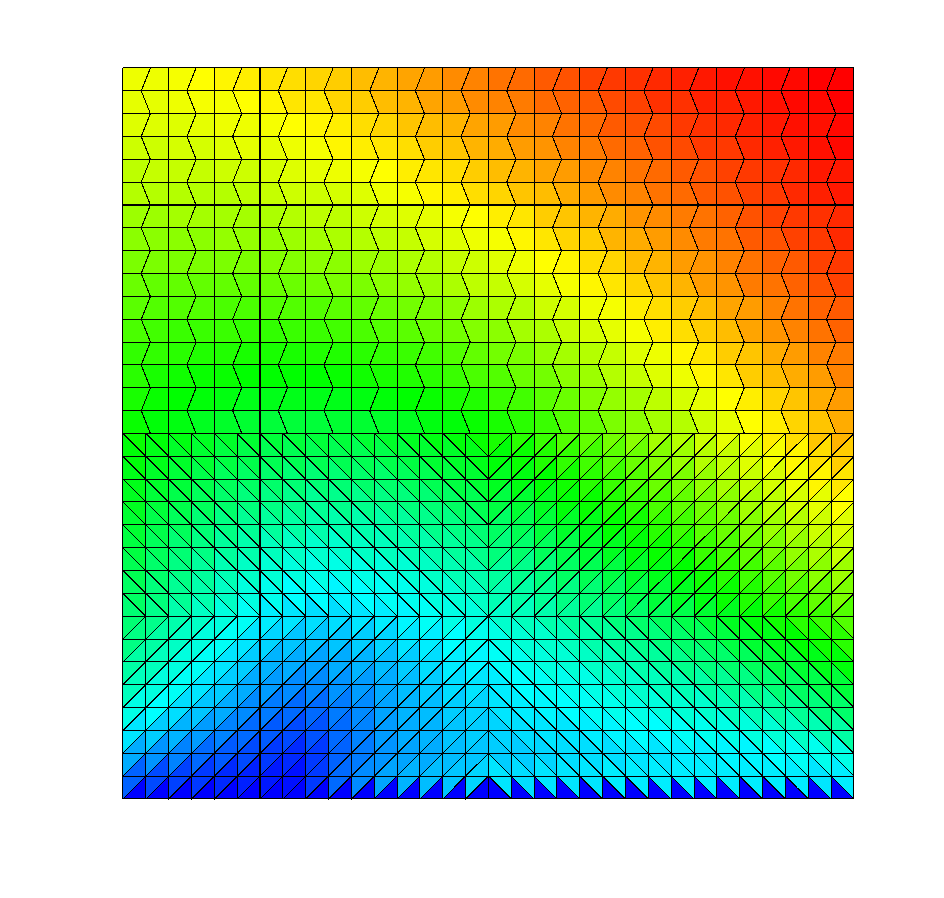}\\
\centering\includegraphics[height=4.5cm, width=5.0cm]{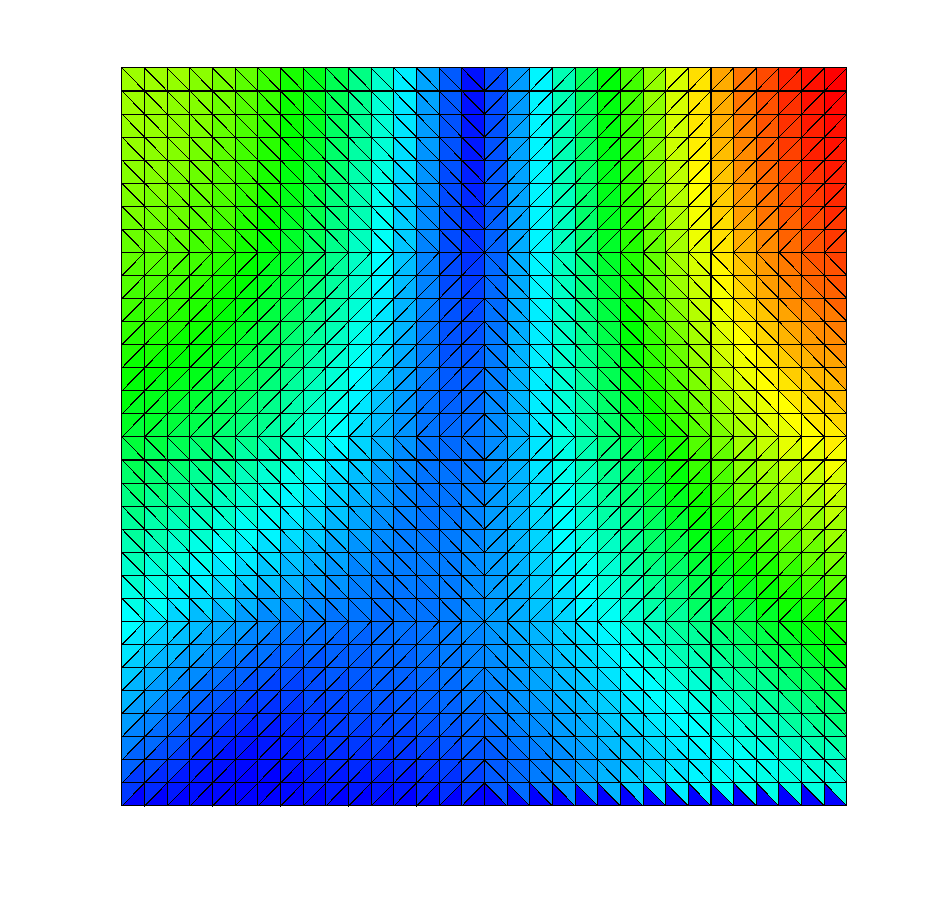}
\centering\includegraphics[height=4.5cm, width=5.0cm]{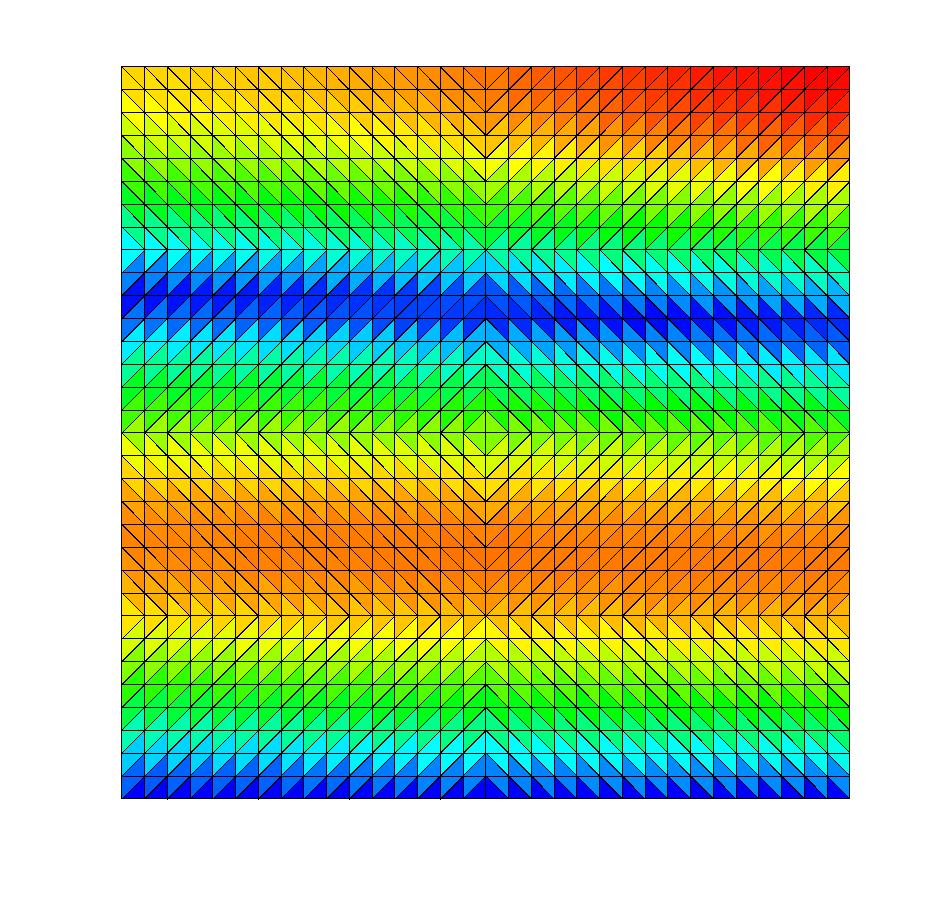}\\
\caption{\label{figesp} Magnitude of the first four eigenfunctions. From left to right: $\bu_{h1}$ and $\bu_{h2}$ (top); $\bu_{h3}$ and $\bu_{h4}$ (bottom).}
\end{center}
\end{figure}

\bibliographystyle{siamplain}
\bibliography{oseen-eigenvalue}
\end{document}